\theoremstyle{plain} % bold header and slanted body
\newtheorem{theorem}{Theorem}
\newtheorem*{theorem*}{Theorem}
\newtheorem{prop}[theorem]{Proposition}
\newtheorem*{prop*}{Proposition}
\newtheorem{lemma}[theorem]{Lemma}
\newtheorem*{lemma*}{Lemma}
\newtheorem{cor}[theorem]{Corollary}
\newtheorem*{cor*}{Corollary}
\newtheorem*{example*}{Example}
\newtheorem*{axiom*}{Axiom}
\newtheorem*{problem*}{Problem}
\newtheorem*{summary*}{Summary}
\newtheorem*{guide*}{Guide}
\theoremstyle{definition} % bold header and normal body
\newtheorem{definition}[theorem]{Definition}
\newtheorem*{definition*}{Definition}
\theoremstyle{definition} % bold header and normal body
\newtheorem{remark}[theorem]{Remark}
\newtheorem*{remark*}{Remark}
\numberwithin{theorem}{section}
\numberwithin{equation}{section}
\numberwithin{figure}{section}
\numberwithin{table}{section}
\renewenvironment{proof}[1][\proofname]{\par
  \normalfont
  \topsep6\p@\@plus6\p@ \trivlist
  \item[\hskip\labelsep{\bfseries #1}\@addpunct{\bfseries.}]\ignorespaces
}{%
  \endtrivlist
}
\renewcommand{\proofname}{Proof}
\def\BOXSYMBOL{\RIfM@\bgroup\else$\bgroup\aftergroup$\fi
  \vcenter{\hrule\hbox{\vrule height.85em\kern.6em\vrule}\hrule}\egroup}
\newcommand{\BOX}{%
  \ifmmode\else\leavevmode\unskip\penalty9999\hbox{}\nobreak\hfill\fi
  \quad\hbox{\BOXSYMBOL}}
\renewcommand\qed{\BOX}
\begin{document}
\title{\textbf{Elliptic Ding-Iohara Algebra \\ and Commutative Families of \\ the Elliptic Macdonald Operator}}
\author{\textbf{Yosuke Saito} \\ Mathematical Institute, Tohoku University, \\ Sendai, Japan}
\maketitle

\begin{abstract}
The elliptic Ding-Iohara algebra is an elliptic quantum group obtained from the free field realization of the elliptic Macdonald operator. In this article, we show the construction of two families of commuting operators which contain the elliptic Macdonald operator by using the elliptic Ding-Iohara algebra and the elliptic Feigin-Odesskii algebra.
\end{abstract}
\tableofcontents
\pagestyle{headings}

\vskip 0.5cm 
\textbf{Notations.} In this paper, we use the following symbols.
\begin{align*}
&\mathbf{Z} : \text{The set of integers}, \quad \mathbf{Z}_{\geq{0}}:=\{0,1,2,\cdots\}, \quad \mathbf{Z}_{>0}:=\{1, \, 2, \, \cdots\}, \\
&\mathbf{Q} : \text{The set of rational numbers}, \quad \mathbf{Q}(q,t) : \text{The field of rational functions of $q$, $t$ over $\mathbf{Q}$}, \\
&\mathbf{C} : \text{The set of complex numbers}, \quad \mathbf{C}^{\times}:=\mathbf{C}\setminus\{0\}, \\
&\mathbf{C}[[z,z^{-1}]] : \text{The set of formal power series of $z$, $z^{-1}$ over $\mathbf{C}$}.
\end{align*} 

If a sequence $\lambda=(\lambda_{1},\cdots,\lambda_{N})\in{(\mathbf{Z}_{\geq{0}})^{N}}$ $(N\in\mathbf{Z}_{>0})$ satisfies the condition $\lambda_{i}\geq{\lambda_{i+1}}$ $(1\leq{i}\leq{N})$, $\lambda$ is called a partition. We denote the set of partitions by $\mathcal{P}$. For a partition $\lambda$, $\ell(\lambda):=\sharp\{i : \lambda_{i}\neq 0\}$ denotes the length of $\lambda$ and $|\lambda|:=\sum_{i=1}^{\ell(\lambda)}\lambda_{i}$ denotes the size of $\lambda$. 

Let $q$, $p\in\mathbf{C}$ be complex parameters satisfying $|q|<1$, $|p|<1$. We define the $q$-infinite product as $(x;q)_{\infty}:=\prod_{n\geq{0}}(1-xq^{n})$ and the theta function as 
\begin{align*}
\Theta_{p}(x):=(p;p)_{\infty}(x;p)_{\infty}(px^{-1};p)_{\infty}.
\end{align*} 
We set the double infinite product as $(x;q,p)_{\infty}:=\prod_{m,n\geq{0}}(1-xq^{m}p^{n})$ and the elliptic gamma function as 
\begin{align*}
\Gamma_{q,p}(x):=\frac{(qpx^{-1};q,p)_{\infty}}{(x;q,p)_{\infty}}.
\end{align*}
For the theta function $\Theta_{p}(x)$ and the elliptic gamma function $\Gamma_{q,p}(x)$, we have the following relations.
\begin{align*}
&\Theta_{p}(x)=-x\Theta_{p}(x^{-1}), \quad \Theta_{p}(px)=\Theta_{p}(x^{-1})=-x^{-1}\Theta_{p}(x), \\
&\Gamma_{q,p}(qx)=\frac{\Theta_{p}(x)}{(p;p)_{\infty}}\Gamma_{q,p}(x), \quad \Gamma_{q,p}(px)=\frac{\Theta_{q}(x)}{(q;q)_{\infty}}\Gamma_{q,p}(x).
\end{align*}

\section{Introduction}
Main topics of this article are about the elliptic Ding-Iohara algebra and the construction of commutative families of the elliptic Macdonald operator. Before starting discussions about them, we give backgrounds of this article.

The Ding-Iohara algebra was introduced by Ding and Iohara as a generalization of the quantum affine algebra $U_{q}(\widehat{sl_{2}})$ [6]. Defining relations of the algebra involve the structure function $g(x)$ which satisfies $g(x^{-1})=g(x)^{-1}$. Hence we can understand that for each function which satisfies $g(x^{-1})=g(x)^{-1}$, there is a Ding-Iohara algebra whose structure function is given by $g(x)$. 

In 2009, Feigin, Hashizume, Hoshino, Shiraishi, and Yanagida found that from the free field realization of the Macdonald operator, a kind of Ding-Iohara algebra was obtained [8]. In the following, we use the word "Ding-Iohara algebra" as the quantum group arising from the free field realization of the Macdonald operator. We can notice that Miki also reached the Ding-Iohara algebra in a different way [7]. There are some applications of the Ding-Iohara algebra such as the AGT conjecture [10][11][12], the refined topological vertex [13].

On the other hand, there exists an elliptic analog of the Macdonald operator [1][4][9]. In [8], Feigin, Hashizume, Hoshino, Shiraishi, and Yanagida made an attempt to construct the free field realization of the elliptic Macdonald operator by the idea of the quasi-Hopf twist. They obtained an elliptic analog of the Ding-Iohara algebra, however some problems remained open. Thus the author have started to search other ways of the free field realization of the elliptic Macdonald operator.

Then the author paid attention to the fact that the free field realization of the Macdonald operator is based on the form of the kernel function of the operator. Furthermore Komori, Noumi, and Shiraishi obtained the kernel function of the elliptic Macdonald operator [9]. Hence it would be a natural expect that the elliptic kernel function has important informations for the free field realization of the elliptic Macdonald operator. Consequently, starting from the elliptic kernel function, the free field realization of the elliptic Macdonald operator was obtained, and another elliptic analog of the Ding-Iohara algebra arose. We call the elliptic algebra the elliptic Ding-Iohara algebra.

\begin{align*}
\begin{matrix}
\text{Elliptic Macdonald operator} & \xrightarrow[]{\textbf{free field realization !}}& \textbf{Elliptic Ding-Iohara algebra} \\
\bigg{\uparrow}\text{\footnotesize{elliptic deformation}} & {} & \bigg{\uparrow}\text{\footnotesize{\textbf{elliptic deformation !}}} \\
\text{Macdonald operator} & \xrightarrow[]{\text{\quad free field realization \quad}}& \text{Ding-Iohara algebra}
\end{matrix}
\end{align*}

In the paper [8], Feigin, Hashizume, Hoshino, Shiraishi and Yanagida also constructed two families of commuting operators which contain the Macdonald operator (commutative families of the Macdonald operator). For the construction of the commutative families of the Macdonald operator, they used the Ding-Iohara algebra and the trigonometric Feigin-Odesskii algebra. Moreover the elliptic Feigin-Odesskii algebra which is an elliptic analog of the trigonometric Feigin-Odesskii algebra was defined. Hence we can consider to combine the elliptic Ding-Iohara algebra and the elliptic Feigin-Odesskii algebra. Actually we can construct two commutative families of the elliptic Macdonald operator by using the elliptic Ding-Iohara algebra and the elliptic Feigin-Odesskii algebra.

\medskip
\textbf{Organization of this paper.}

Organization of this paper is as follows. Section 2 is overview of the trigonometric case [8]. In section 3 we give the free field realization of the elliptic Macdonald operator and introduce the elliptic Ding-Iohara algebra [14]. Construction of the commutative families of the elliptic Macdonald operator by using the elliptic Ding-Iohara algebra and the elliptic Feigin-Odesskii algebra is shown [15].

\section{Trigonometric theory side}
In this section, we review some results due to Feigin, Hashizume, Hoshino, Shiraishi, and Yanagida [8] : the free field realization of the Macdonald operator, the Ding-Iohara algebra, and the construction of the commutative families of the Macdonald operator.

\subsection{Free field realization of the Macdonald operator}
In the following, let $q$, $t\in{\mathbf{C}}$ be parameters and we assume $|q|<1$. First we define the algebra $\mathcal{B}$ of boson to be generated by $\{a_{n}\}_{n\in{\mathbf{Z}\setminus\{0\}}}$ and the relation :
\begin{align}
[a_{m},a_{n}]=m\frac{1-q^{|m|}}{1-t^{|m|}}\delta_{m+n,0}.
\end{align}
We set the normal ordering $\bm{:} \bullet \bm{:}$ as 
\begin{align*}
\bm{:}a_{m}a_{n}\bm{:}=
\begin{cases}
a_{m}a_{n} \quad (m<n), \\
a_{n}a_{m} \quad (m\geq{n}).
\end{cases}
\end{align*}
Let $|0 \rangle$ be the vacuum vector which satisfies $a_{n}|0 \rangle=0$ $(n>0)$. For a partition $\lambda$, we set $a_{-\lambda}:=a_{-\lambda_{1}}\cdots a_{-\lambda_{\ell(\lambda)}}$ and define the boson Fock space $\mathcal{F}$ as a left $\mathcal{B}$ module :
\begin{align*}
\mathcal{F}:=\text{span}\{a_{-\lambda}|0 \rangle : \lambda\in{\mathcal{P}}\}.
\end{align*}

Set the $q$-shift operator as $T_{q,x}f(x):=f(qx)$. We define the Macdonald operator $H_{N}(q,t)$ $(N\in\mathbf{Z}_{>0})$ [1][2][5] as
\begin{align}
H_{N}(q,t):=\sum_{i=1}^{N}\prod_{j\neq{i}}\frac{tx_{i}-x_{j}}{x_{i}-x_{j}}T_{q,x_{i}}.
\end{align}

In the following $[f(z)]_{1}:=\displaystyle \oint\frac{dz}{2\pi iz}f(z)$ denotes the constant term of $f(z)$ in $z$.

\begin{prop}[\textbf{Free field realization of the Macdonald operator [5][8]}] 
Set operators $\eta(z)$, $\xi(z)$, $\phi(z) : \mathcal{F} \to \mathcal{F}\otimes\mathbf{C}[[z,z^{-1}]]$ as follows ($\gamma:=(qt^{-1})^{-1/2}$).
\begin{align}
&\eta(z):=\bm{:}\exp\bigg(-\sum_{n\neq{0}}(1-t^{n})a_{n}\frac{z^{-n}}{n}\bigg)\bm{:}, \\
&\xi(z):=\bm{:}\exp\bigg(\sum_{n\neq{0}}(1-t^{n})\gamma^{|n|}a_{n}\frac{z^{-n}}{n}\bigg)\bm{:}, \\
&\phi(z):=\exp\bigg(\sum_{n>0}\frac{1-t^{n}}{1-q^{n}}a_{-n}\frac{z^{n}}{n}\bigg).
\end{align}
We use the symbol $\phi_{N}(x):=\prod_{j=1}^{N}\phi(x_{j})$.

(1) The operator $\eta(z)$ reproduces the Macdonald operator $H_{N}(q,t)$ as follows.
\begin{align}
[\eta(z)]_{1}\phi_{N}(x)|0 \rangle=t^{-N}\{(t-1)H_{N}(q,t)+1\}\phi_{N}(x)|0 \rangle.
\end{align}

(2) The operator $\xi(z)$ reproduces the Macdonald operator $H_{N}(q^{-1},t^{-1})$ as follows.
\begin{align}
[\xi(z)]_{1}\phi_{N}(x)|0 \rangle=t^{N}\{(t^{-1}-1)H_{N}(q^{-1},t^{-1})+1\}\phi_{N}(x)|0 \rangle.
\end{align}
\end{prop}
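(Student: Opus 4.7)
The plan is to realize the constant-term extraction $[\,\eta(z)\,]_{1}$ as a contour integral $\oint\frac{dz}{2\pi iz}$ around a sufficiently large circle, compute the operator product $\eta(z)\phi_{N}(x)|0\rangle$ in closed rational form by OPE, and then pick off residues. First I would contract the annihilation part $\eta^{+}(z)$ past each $\phi(x_{j})$ using the boson relation (2.1); a short calculation gives $\eta(z)\phi(w)=\frac{1-w/z}{1-tw/z}\bm{:}\eta(z)\phi(w)\bm{:}$, and iterating over $j=1,\dots,N$ yields
\begin{align*}
\eta(z)\phi_{N}(x)|0\rangle=\prod_{j=1}^{N}\frac{z-x_{j}}{z-tx_{j}}\,\bm{:}\eta(z)\phi_{N}(x)\bm{:}|0\rangle.
\end{align*}
The rational prefactor has degree zero in $z$, equals $t^{-N}$ at $z=0$, and has only simple poles at $z=tx_{i}$.

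Next I would interpret $[\,\cdot\,]_{1}$ as $\oint_{|z|=R}\frac{dz}{2\pi iz}$ with $R$ larger than all $t|x_{i}|$, so that the OPE expansion converges on the contour, and then deform the contour inward past the poles at $z=tx_{i}$ to a small circle around the origin. The contribution at $z=0$ is $t^{-N}\phi_{N}(x)|0\rangle$, because the normal-ordered factor is a formal power series in $z$ whose constant term is $\phi_{N}(x)|0\rangle$. The residue at $z=tx_{i}$ produces a scalar $(t-1)t^{-N}\prod_{j\neq i}\frac{tx_{i}-x_{j}}{x_{i}-x_{j}}$, precisely matching the Macdonald coefficient in (2.2), times the value of the normal-ordered factor at $z=tx_{i}$.

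The heart of the argument is the identification $\bm{:}\eta(z)\phi_{N}(x)\bm{:}|0\rangle\big|_{z=tx_{i}}=\phi(x_{1})\cdots\phi(qx_{i})\cdots\phi(x_{N})|0\rangle$, i.e.\ that evaluation at $z=tx_{i}$ realizes the $q$-shift $T_{q,x_{i}}$. I would prove this by checking the operator identity $e^{\eta^{-}(tx_{i})}=\phi(qx_{i})\phi(x_{i})^{-1}$ on $\mathcal{F}$: both sides are exponentials of creation modes $a_{-n}$, and matching coefficients shows each reduces to $-\sum_{n>0}\frac{1-t^{n}}{n}a_{-n}x_{i}^{n}$. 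Since all $\phi(x_{j})$ mutually commute, multiplication by $\phi_{N}(x)$ then simply replaces the factor $\phi(x_{i})$ by $\phi(qx_{i})$. Summing the residues gives the right-hand side of (2.6).

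Part (2) follows by the same strategy applied to $\xi(z)$: the modified OPE is $\xi(z)\phi(w)=\frac{1-t\gamma w/z}{1-\gamma w/z}\bm{:}\xi(z)\phi(w)\bm{:}$, the poles move to $z=\gamma x_{i}$, and the specific normalization $\gamma=(qt^{-1})^{-1/2}$ is exactly what turns the analogous identity into $e^{\xi^{-}(\gamma x_{i})}=\phi(q^{-1}x_{i})\phi(x_{i})^{-1}$, producing the inverse shift $T_{q^{-1},x_{i}}$ and hence $H_{N}(q^{-1},t^{-1})$. The main routine-but-careful step in both parts will be tracking the overall $t^{\pm N}$ and $(t^{\pm 1}-1)$ factors coming out of the residues, so that the final coefficients assemble into the stated form; no new ingredient beyond the OPE and the exponent-matching lemma enters.
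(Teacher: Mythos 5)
Your proposal is correct and is essentially the paper's own argument: the same OPE factor $\prod_{j}\frac{1-x_{j}/z}{1-tx_{j}/z}$ obtained by Wick contraction, the same shift identity $(\eta(tz))_{-}\phi(z)=\phi(qz)$ (your exponent-matching lemma, likewise $(\xi(\gamma z))_{-}\phi(z)=\phi(q^{-1}z)$ for part (2)), and the same assembly of the $t^{\mp N}$ and $(t^{\pm 1}-1)$ coefficients. The only difference is one of language: where you interpret $[\,\cdot\,]_{1}$ as $\oint_{|z|=R}\frac{dz}{2\pi iz}$ and deform the contour inward past the simple poles at $z=tx_{i}$ (justified componentwise, since each graded Fock component of $(\eta(z))_{-}\phi_{N}(x)|0\rangle$ is polynomial in $z$), the paper performs the identical decomposition purely formally, via the partial-fraction expansion of the prefactor combined with $\delta(x)=\frac{1}{1-x}+\frac{x^{-1}}{1-x^{-1}}$, so that its delta-function terms are exactly your residues at $z=tx_{i}$ and its remaining piece $t^{-N}\prod_{j}\frac{1-z/x_{j}}{1-t^{-1}z/x_{j}}$, regular at the origin with constant term $1$, is exactly your $z=0$ contribution $t^{-N}\phi_{N}(x)|0\rangle$.
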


\begin{proof}[\textit{Proof}]
We show the equation (2.6). First by Wick's theorem, we have
\begin{align*}
\eta(z)\phi(w)=\frac{1-w/z}{1-tw/z}\bm{:}\eta(z)\phi(w)\bm{:}
\end{align*}
and also have
\begin{align*}
\eta(z)\phi_{N}(x)=\prod_{j=1}^{N}\frac{1-x_{j}/z}{1-tx_{j}/z}\bm{:}\eta(z)\phi_{N}(x)\bm{:}.
\end{align*}
By the partial fraction expansion as
\begin{align*}
\prod_{j=1}^{N}\frac{1-x_{j}/z}{1-tx_{j}/z}=\frac{1-t}{1-t^{N}}\sum_{i=1}^{N}\frac{1-t^{-N+1}x_{i}/z}{1-tx_{i}/z}\prod_{j\neq{i}}\frac{tx_{i}-x_{j}}{x_{i}-x_{j}}
\end{align*}
and the formal expression of the delta function as
\begin{align*}
\delta(x):=\sum_{n\in\mathbf{Z}}x^{n}=\frac{1}{1-x}+\frac{x^{-1}}{1-x^{-1}},
\end{align*}
we have the following relation.
\begin{align*}
\prod_{j=1}^{N}\frac{1-x_{j}/z}{1-tx_{j}/z}=t^{-N}(t-1)\sum_{i=1}^{N}\prod_{j\neq{i}}\frac{tx_{i}-x_{j}}{x_{i}-x_{j}}\delta\Big(t\frac{x_{i}}{z}\Big)+t^{-N}\prod_{j=1}^{N}\frac{1-z/x_{j}}{1-t^{-1}z/x_{j}}.
\end{align*}
We use the notation $(\eta(z))_{\pm}$ defined by
\begin{align*}
(\eta(z))_{\pm}:=\exp\bigg(-\sum_{\pm n>0}(1-t^{n})a_{n}\frac{z^{-n}}{n}\bigg).
\end{align*}
Due to the relation $(\eta(tz))_{-}\phi(z)=\phi(qz)=T_{q,z}\phi(z)$, we have the following.
\begin{align*}
&\quad [\eta(z)]_{1}\phi_{N}(x)|0 \rangle \\
&=\left\{t^{-N}(t-1)\sum_{i=1}^{N}\prod_{j\neq{i}}\frac{tx_{i}-x_{j}}{x_{i}-x_{j}}(\eta(tx_{i}))_{-}+t^{-N}\left[\prod_{j=1}^{N}\frac{1-z/x_{j}}{1-t^{-1}z/x_{j}}(\eta(z))_{-}\right]_{1}\right\}\phi_{N}(x)|0 \rangle \\
&=t^{-N}(t-1)\sum_{i=1}^{N}\prod_{j\neq{i}}\frac{tx_{i}-x_{j}}{x_{i}-x_{j}}T_{q,x_{i}}\phi_{N}(x)|0 \rangle +t^{-N}\phi_{N}(x)|0 \rangle \\
&=t^{-N}\{(t-1)H_{N}(q,t)+1\}\phi_{N}(x)|0 \rangle,
\end{align*}
where we use the equation 
\begin{align*}
\left[\prod_{j=1}^{N}\frac{1-z/x_{j}}{1-t^{-1}z/x_{j}}(\eta(z))_{-}\right]_{1}=1.
\end{align*} 
The formula (2.7) is shown in the similar way. \quad $\Box$
\end{proof}

We also have the dual version of the proposition 2.1. Let $\langle 0|$ be the dual vacuum vector which satisfies the condition $\langle 0|a_{n}=0$ $(n<0)$ and define the dual boson Fock space $\mathcal{F}^{\ast}$ as a right $\mathcal{B}$ module :
\begin{align*}
\mathcal{F}^{\ast}:=\text{span}\{\langle 0|a_{\lambda} : \lambda\in\mathcal{P}\} \quad (a_{\lambda}:=a_{\lambda_{1}}\cdots a_{\ell(\lambda)}).
\end{align*}

\begin{prop}[\textbf{Dual version of the proposition 2.1}]
Let us define an operator $\phi^{\ast}(z) : \mathcal{F}^{\ast} \to \mathcal{F}^{\ast}\otimes\mathbf{C}[[z,z^{-1}]]$ as
\begin{align}
\phi^{\ast}(z):=\exp\bigg(\sum_{n>0}\frac{1-t^{n}}{1-q^{n}}a_{n}\frac{z^{n}}{n}\bigg).
\end{align}
We use the symbol $\phi^{\ast}_{N}(x):=\prod_{j=1}^{N}\phi^{\ast}(x_{j})$.

(1) The operator $\eta(z)$ reproduces the Macdonald operator $H_{N}(q,t)$ as follows.
\begin{align}
\langle 0|\phi^{\ast}_{N}(x)[\eta(z)]_{1}=t^{-N}\{(t-1)H_{N}(q,t)+1\}\langle 0|\phi^{\ast}_{N}(x).
\end{align}

(2) The operator $\xi(z)$ reproduces the Macdonald operator $H_{N}(q^{-1},t^{-1})$ as follows.
\begin{align}
\langle 0|\phi^{\ast}_{N}(x)[\xi(z)]_{1}=t^{N}\{(t^{-1}-1)H_{N}(q^{-1},t^{-1})+1\}\langle 0|\phi^{\ast}_{N}(x).
\end{align}
\end{prop}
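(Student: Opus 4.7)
The plan is to mirror the proof of Proposition 2.1, adapted to the right-module structure of $\mathcal{F}^{\ast}$. Now $\langle 0|$ is annihilated from the right by the negative-mode bosons, so when we split $\eta(z)=(\eta(z))_{-}(\eta(z))_{+}$ the roles of the two pieces are swapped: $(\eta(z))_{-}$ will act trivially on the bra vacuum, while $(\eta(z))_{+}$ carries the nontrivial content that must be localized via the partial fraction trick.

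First I would compute the operator product $\phi^{\ast}(w)\eta(z)$. Since $\phi^{\ast}(w)$ carries only positive modes it commutes with $(\eta(z))_{+}$, and the only Wick contraction comes from passing $\phi^{\ast}(w)$ through $(\eta(z))_{-}$. A careful computation of the commutator of exponents (with the $n<0$ portion of $\eta(z)$'s defining exponent producing factors $(1-t^{-m})$ after $n\mapsto -m$) yields
\begin{align*}
\phi^{\ast}(w)\eta(z) = \frac{1-wz/t}{1-wz}\bm{:}\phi^{\ast}(w)\eta(z)\bm{:}.
\end{align*}
Multiplying over the $N$ factors of $\phi^{\ast}_{N}(x)$ and applying $\langle 0|$ from the left, the identities $\langle 0|(\eta(z))_{-}=\langle 0|$ and $[(\eta(z))_{+},\phi^{\ast}_{N}(x)]=0$ together reduce the integrand to
\begin{align*}
\langle 0|\phi^{\ast}_{N}(x)\eta(z) = \prod_{j=1}^{N}\frac{1-x_{j}z/t}{1-x_{j}z}\,\langle 0|\phi^{\ast}_{N}(x)(\eta(z))_{+}.
\end{align*}

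The partial fraction decomposition
\begin{align*}
\prod_{j=1}^{N}\frac{1-x_{j}z/t}{1-x_{j}z} = t^{-N} + (t-1)t^{-N}\sum_{i=1}^{N}\prod_{j\neq i}\frac{tx_{i}-x_{j}}{x_{i}-x_{j}}\cdot\frac{1}{1-x_{i}z},
\end{align*}
expanded as $(1-x_{i}z)^{-1}=\sum_{k\geq 0}(x_{i}z)^{k}$ consistently with the OPE, then lets us take the constant term $[\,\cdot\,]_{1}$ in $z$. Using $[(\eta(z))_{+}]_{1}=1$ and the evident residue $[(1-x_{i}z)^{-1}(\eta(z))_{+}]_{1}=(\eta(1/x_{i}))_{+}$ together with the shift identity
\begin{align*}
\phi^{\ast}(x_{i})(\eta(1/x_{i}))_{+} = \phi^{\ast}(qx_{i}) = T_{q,x_{i}}\phi^{\ast}(x_{i}),
\end{align*}
which follows from $\frac{1-t^{n}}{1-q^{n}}-(1-t^{n})=\frac{(1-t^{n})q^{n}}{1-q^{n}}$, one recovers the right-hand side of (2.9). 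Equation (2.10) is proved identically with $\xi(z)$ in place of $\eta(z)$: the extra $\gamma^{|n|}$ factors in $\xi(z)$ shift the contraction in the appropriate way to produce $H_{N}(q^{-1},t^{-1})$.

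The main delicate point will be the precise form of the OPE coefficient. The negative-mode exponent of $\eta(z)$ carries $(1-t^{-m})$ rather than $(1-t^{m})$ (after substituting $n=-m$ in the defining sum), and it is exactly this asymmetry that produces the $1/t$ in $(1-wz/t)/(1-wz)$ and hence, through the partial fraction, the standard Macdonald coefficients $\prod_{j\neq i}(tx_{i}-x_{j})/(x_{i}-x_{j})$ demanded by the statement; an inadvertent sign flip or $t\leftrightarrow t^{-1}$ at this step would produce the wrong form of the Macdonald operator.
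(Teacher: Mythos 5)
Your proof is correct and follows essentially the same route as the paper's proof of Proposition 2.1, of which this statement is the unproved mirror image: Wick contraction producing the factor $(1-wz/t)/(1-wz)$, partial fraction expansion with the constant $t^{-N}$ coming from the value at $z=\infty$, and the multiplicative shift identity $\phi^{\ast}(x_{i})(\eta(1/x_{i}))_{+}=\phi^{\ast}(qx_{i})$ — all of which check out, including the coefficient identity $\frac{1-t^{n}}{1-q^{n}}-(1-t^{n})=\frac{(1-t^{n})q^{n}}{1-q^{n}}$ and the analogous $\xi$-side computation, where $z=t\gamma^{-1}x_{i}^{-1}$ and $\gamma^{2}=t/q$ indeed yield $\phi^{\ast}(q^{-1}x_{i})$. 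The only cosmetic deviation is that you extract the constant term by direct series pairing, which is legitimate here because the OPE factor expands in nonnegative powers of $z$ while $(\eta(z))_{+}$ contains only nonpositive ones, whereas the paper's proof of the ket-side statement instead routes through the formal delta function $\delta(tx_{i}/z)$ together with the auxiliary identity $\big[\prod_{j}\frac{1-z/x_{j}}{1-t^{-1}z/x_{j}}(\eta(z))_{-}\big]_{1}=1$.
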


\begin{remark}
The kernel function of the Macdonald operator is defined by [2][3]
\begin{align}
\Pi(q,t)(x,y):=\prod_{i,j}\frac{(tx_{i}y_{j};q)_{\infty}}{(x_{i}y_{j};q)_{\infty}}.
\end{align}
The free field realization of the Macdonald operator is based on the form of the kernel function $\Pi(q,t)(x,y)$.

The operators $\phi^{\ast}_{M}(x)$, $\phi_{N}(y)$ and the kernel function $\Pi(q,t)(x,y)$ satisfy the following.
\begin{align*}
\langle 0|\phi^{\ast}_{M}(x)\phi_{N}(y)|0 \rangle=\Pi(q,t)(\{x_{i}\}_{i=1}^{M},\{y_{j}\}_{j=1}^{N}):=\prod_{\genfrac{}{}{0pt}{1}{1\leq{i}\leq{M}}{1\leq{j}\leq{N}}}\frac{(tx_{i}y_{j};q)_{\infty}}{(x_{i}y_{j};q)_{\infty}}.
\end{align*}
\end{remark}

By the free field realization of the Macdonald operator, we can show the functional equation of the kernel function $\Pi(q,t)(\{x_{i}\}_{i=1}^{M},\{y_{j}\}_{j=1}^{N})$.

\begin{prop}[\textbf{Functional equation of the kernel function}]
The Macdonald operator $H_{N}(q,t)$ and the kernel function $\Pi(q,t)(\{x_{i}\}_{i=1}^{M},\{y_{j}\}_{j=1}^{N})$ satisfy the following functional equation.
\begin{align}
\{H_{M}(q,t)_{x}{-}t^{M-N}H_{N}(q,t)_{y}\}\Pi(q,t)(\{x_{i}\}_{i=1}^{M},\{y_{j}\}_{j=1}^{N})
=\frac{1{-}t^{M-N}}{1{-}t}\Pi(q,t)(\{x_{i}\}_{i=1}^{M},\{y_{j}\}_{j=1}^{N}).
\end{align}
Here $H_{M}(q,t)_{x}$ denotes the Macdonald operator which acts on functions of $x_{1},\cdots,x_{M}$.
\end{prop}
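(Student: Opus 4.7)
The plan is to compute the matrix element $\langle 0|\phi^{\ast}_{M}(x)[\eta(z)]_{1}\phi_{N}(y)|0 \rangle$ in two different ways, by acting with the operator $[\eta(z)]_{1}$ on each side of the pairing, and then equate the two results.

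First I would use Proposition 2.2 to let $[\eta(z)]_{1}$ act to the left on the dual state $\langle 0|\phi^{\ast}_{M}(x)$, and then use Remark 2.3 to identify the remaining matrix element with the kernel function. This gives
\begin{align*}
\langle 0|\phi^{\ast}_{M}(x)[\eta(z)]_{1}\phi_{N}(y)|0 \rangle = t^{-M}\{(t-1)H_{M}(q,t)_{x}+1\}\,\Pi(q,t)(\{x_{i}\}_{i=1}^{M},\{y_{j}\}_{j=1}^{N}).
\end{align*}
Second I would use Proposition 2.1 to let the same operator act to the right on $\phi_{N}(y)|0 \rangle$, which yields
\begin{align*}
\langle 0|\phi^{\ast}_{M}(x)[\eta(z)]_{1}\phi_{N}(y)|0 \rangle = t^{-N}\{(t-1)H_{N}(q,t)_{y}+1\}\,\Pi(q,t)(\{x_{i}\}_{i=1}^{M},\{y_{j}\}_{j=1}^{N}).
\end{align*}

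Equating the two expressions, multiplying through by $t^{M}$, moving the constant terms to one side, and dividing by $t-1$, I would recover exactly the functional equation (2.12), with the right-hand side $\dfrac{1-t^{M-N}}{1-t}\,\Pi$ appearing from the difference $\dfrac{t^{M-N}-1}{t-1}$ after a sign.

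There is no serious obstacle here: the argument is just a two-sided evaluation of an inner product, and the associativity needed is automatic because $[\eta(z)]_{1}$ is an honest operator on the Fock space. The only point worth checking is that the constant-term prescription $[\,\cdot\,]_{1}$ commutes with taking matrix elements, which holds because the contraction factors produced by $\phi^{\ast}_{M}(x)$ on the left and $\phi_{N}(y)$ on the right are expanded as formal Laurent series in $z$ and the contour integral is applied termwise.
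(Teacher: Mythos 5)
Your proposal is correct and takes essentially the same route as the paper's own proof: both compute the matrix element $\langle 0|\phi^{\ast}_{M}(x)[\eta(z)]_{1}\phi_{N}(y)|0 \rangle$ in two ways, acting left via the dual realization (Proposition 2.2) and right via Proposition 2.1, and then equate to obtain (2.12) by elementary algebra. Your extra care about the constant-term prescription commuting with matrix elements, and your use of $\phi^{\ast}_{M}(x)$ where the paper's proof writes $\phi_{M}(x)$ (evidently a typo there), only make the argument cleaner.
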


\begin{proof}[\textit{Proof}]
The proof of the proposition 2.4 is very simple. By the free field realization of the Macdonald operator, we can calculate the matrix element $\langle 0|\phi_{M}(x)[\eta(z)]_{1}\phi_{N}(y)|0 \rangle$ in different two ways as
\begin{align*}
&\quad \langle 0|\phi_{M}(x)[\eta(z)]_{1}\phi_{N}(y)|0 \rangle \\
&=t^{-M}\{(t-1)H_{M}(q,t)_{x}+1\}\Pi(q,t)(\{x_{i}\}_{i=1}^{M},\{y_{j}\}_{j=1}^{N}) \\
&=t^{-N}\{(t-1)H_{N}(q,t)_{y}+1\}\Pi(q,t)(\{x_{i}\}_{i=1}^{M},\{y_{j}\}_{j=1}^{N}).
\end{align*}
Consequently we have the relation (2.12). \quad $\Box$
\end{proof}

\begin{remark}
In [9], Komori, Noumi, and Shiraishi give another proof of the functional equation (2.12) which doesn't rely on the free field realization.
\end{remark}

\subsection{Ding-Iohara algebra $\mathcal{U}(q,t)$}
\begin{definition}[\textbf{Ding-Iohara algebra $\mathcal{U}(q,t)$ [8]}] 
Let us define the structure function $g(x)$ as
\begin{align}
g(x):=\frac{(1-qx)(1-t^{-1}x)(1-q^{-1}tx)}{(1-q^{-1}x)(1-tx)(1-qt^{-1}x)}.
\end{align}
Then $g(x^{-1})=g(x)^{-1}$. Let $\gamma$ be a central, invertible element and $x^{\pm}(z):=\sum_{n\in{\mathbf{Z}}}x^{\pm}_{n}z^{-n}$, $\psi^{\pm}(z):=\sum_{n\in\mathbf{Z}}\psi^{\pm}_{n}z^{-n}$ be currents satisfying the relations :
\begin{align}
&\hskip 1.5cm [\psi^{\pm}(z), \psi^{\pm}(w)]=0, \quad \psi^{+}(z)\psi^{-}(w)=\frac{g(\gamma z/w)}{g(\gamma^{-1}z/w)}\psi^{-}(w)\psi^{+}(z), \notag\\
&\psi^{\pm}(z)x^{+}(w)=g\left(\gamma^{\pm\frac{1}{2}}\frac{z}{w}\right)x^{+}(w)\psi^{\pm}(z), \quad \psi^{\pm}(z)x^{-}(w)=g\left(\gamma^{\mp\frac{1}{2}}\frac{z}{w}\right)^{-1}x^{-}(w)\psi^{\pm}(z), \notag\\
&\hskip 3cm x^{\pm}(z)x^{\pm}(w)=g\left(\frac{z}{w}\right)^{\pm 1}x^{\pm}(w)x^{\pm}(z), \notag\\
&[x^{+}(z),x^{-}(w)]=\frac{(1-q)(1-t^{-1})}{1-qt^{-1}}\bigg\{\delta\Big(\gamma\frac{w}{z}\Big)\psi^{+}\big(\gamma^{1/2}w\big)-\delta\Big(\gamma^{-1}\frac{w}{z}\Big)\psi^{-}\big(\gamma^{-1/2}w\big)\bigg\}.
\end{align}
We define the Ding-Iohara algebra $\mathcal{U}(q,t)$ to be an associative $\mathbf{C}$-algebra generated by $\{x^{\pm}_{n}\}_{n\in{\mathbf{Z}}},\,\{\psi^{\pm}_{n}\}_{n\in{\mathbf{Z}}}$, and $\gamma$.
\end{definition}

By Wick's theorem, we have the following proposition.

\begin{prop}[\textbf{Free field realization of the Ding-Iohara algebra $\mathcal{U}(q,t)$ [8]}] 
Set $\gamma:=(qt^{-1})^{-1/2}$ and define operators $\varphi^{\pm}(z) : \mathcal{F} \to \mathcal{F}\otimes\mathbf{C}[[z,z^{-1}]]$ as follows :
\begin{align}
\varphi^{+}(z):=\bm{:}\eta(\gamma^{1/2}z)\xi(\gamma^{-1/2}z)\bm{:}, \quad \varphi^{-}(z):=\bm{:}\eta(\gamma^{-1/2}z)\xi(\gamma^{1/2}z)\bm{:}.
\end{align}
Then the map
\begin{align*}
x^{+}(z) \mapsto \eta(z), \quad x^{-}(z) \mapsto \xi(z), \quad \psi^{\pm}(z) \mapsto \varphi^{\pm}(z)
\end{align*}
gives a representation of the Ding-Iohara algebra $\mathcal{U}(q,t)$.
\end{prop}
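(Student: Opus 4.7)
The plan is to verify each defining relation of (2.15) by a direct Wick-theorem computation of operator product expansions on the Fock space $\mathcal{F}$. Every one of $\eta(z)$, $\xi(z)$, $\varphi^{\pm}(z)$ is a single normal-ordered exponential in the $a_n$, so any product of two of them reorders to a scalar contraction factor times a normal-ordered product, and the contraction factor is a rational function of the ratio of spectral parameters determined entirely by the exponent coefficients and the propagator $[a_m,a_{-m}] = m(1-q^m)/(1-t^m)$.

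For the $x^{\pm}x^{\pm}$ relations, I would compute $\eta(z)\eta(w)$ and $\xi(z)\xi(w)$ by BCH and the identity $\sum_{n>0}x^{n}/n = -\log(1-x)$, obtaining for instance
\begin{align*}
\eta(z)\eta(w) = \frac{(1-w/z)(1-qt^{-1}w/z)}{(1-qw/z)(1-t^{-1}w/z)}\bm{:}\eta(z)\eta(w)\bm{:}.
\end{align*}
Since $\bm{:}\eta(z)\eta(w)\bm{:}$ is symmetric in $(z,w)$, the ratio of the two orderings is a scalar rational function, and a short manipulation using $(1-z/w) = -(z/w)(1-w/z)$ identifies it with the structure function $g(z/w)$ from (2.13). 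The $x^{-}x^{-}$ case is identical once the extra factors $\gamma^{|n|}$ in $\xi$ are tracked, and the $\psi^{\pm}x^{\pm}$ relations reduce to the same type of calculation; the $\gamma^{\pm 1/2}$ shifts in the arguments of $g$ emerge automatically from the definitions $\varphi^{\pm}(z) = \bm{:}\eta(\gamma^{\pm 1/2}z)\xi(\gamma^{\mp 1/2}z)\bm{:}$.

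The main obstacle is the $[x^{+}(z),x^{-}(w)]$ relation, since here the rational contraction factor must be converted into a combination of delta functions. Using $\gamma^{2}=q^{-1}t$, the contraction for $\eta(z)\xi(w)$ carries denominator $(1-\gamma w/z)(1-\gamma^{-1}w/z)$, and the opposite ordering $\xi(w)\eta(z)$ yields the same rational function expanded in the reciprocal region. Taking the difference and invoking the delta-function identity $\delta(x) = 1/(1-x)+x^{-1}/(1-x^{-1})$ used in the proof of Proposition 2.1 localises the difference to the two poles $w/z=\gamma^{\pm 1}$. On the support of $\delta(\gamma^{\pm 1}w/z)$ one has $z=\gamma^{\pm 1}w$, whereupon $\bm{:}\eta(z)\xi(w)\bm{:}$ specialises to $\bm{:}\eta(\gamma^{\pm 1}w)\xi(w)\bm{:}$, which equals $\varphi^{\pm}(\gamma^{\pm 1/2}w)$ directly from (2.16). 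Evaluating the numerator of the contraction factor at these same poles must then produce the overall constant $(1-q)(1-t^{-1})/(1-qt^{-1})$; checking this numerical coefficient is the most delicate step of the proof.

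Finally, the $\psi\psi$ relations are considerably simpler: a direct computation of the combined exponent of $\varphi^{+}(z)$ shows that the coefficients of the creation modes $a_{-n}$ cancel identically, so $\varphi^{+}(z)$ contains only annihilation operators, and dually $\varphi^{-}(z)$ contains only creation operators. Consequently $[\psi^{\pm}(z),\psi^{\pm}(w)]=0$ is automatic, and the only non-trivial relation $\psi^{+}(z)\psi^{-}(w) = \frac{g(\gamma z/w)}{g(\gamma^{-1}z/w)}\psi^{-}(w)\psi^{+}(z)$ follows from one more Wick calculation with the $\gamma$-powers carefully tracked. No idea beyond systematic OPE bookkeeping is required; the real work lies in the delta-function analysis of step three.
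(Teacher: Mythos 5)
Your proposal is correct and takes the same route as the paper: the paper's entire proof of Proposition 2.7 is the single invocation ``By Wick's theorem,'' and your systematic OPE verification --- the contraction factor $\frac{(1-w/z)(1-qt^{-1}w/z)}{(1-qw/z)(1-t^{-1}w/z)}$ for $\eta(z)\eta(w)$, the delta-function localisation of $[\eta(z),\xi(w)]$ at $w/z=\gamma^{\pm 1}$ with residue $\frac{(1-q)(1-t^{-1})}{1-qt^{-1}}$, and the observation that $\varphi^{+}$ (resp.\ $\varphi^{-}$) contains only annihilation (resp.\ creation) modes --- is exactly the computation that citation compresses. All the specific formulas you state check out against the commutation relation (2.1) and the definitions (2.3), (2.4), (2.16).
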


\subsection{Trigonometric Feigin-Odesskii algebra $\mathcal{A}$}
In this subsection, we review basic facts of the trigonometric Feigin-Odesskii algebra [8].

\begin{definition}[\textbf{Trigonometric Feigin-Odesskii algebra $\mathcal{A}$}] 
Let $\varepsilon_{n}(q;x) \,(n\in\mathbf{Z}_{>0})$ be a function defined as
\begin{align}
\varepsilon_{n}(q;x):=\prod_{1\leq{a}<b\leq{n}}\frac{(x_{a}-qx_{b})(x_{a}-q^{-1}x_{b})}{(x_{a}-x_{b})^{2}}.
\end{align}
We also define $\omega(x,y)$ as
\begin{align}
\omega(x,y):=\frac{(x-q^{-1}y)(x-ty)(x-qt^{-1}y)}{(x-y)^{3}}.
\end{align}
We define the action of the $N$-th symmetric group $\mathfrak{S}_{N}$ on $N$-variable functions by 
\begin{align*}
\sigma\cdot(f(x_{1},\cdots,x_{N})):=f(x_{\sigma(1)},\cdots,x_{\sigma(N)}) \quad (\sigma\in\mathfrak{S}_{N}). 
\end{align*}
We define the symmetrizer as
\begin{align}
\text{Sym}[f(x_{1},\cdots,x_{N})]:=\frac{1}{N!}\sum_{\sigma\in\mathfrak{S}_{N}}\sigma\cdot(f(x_{1},\cdots,x_{N})).
\end{align}
For a $m$-variable function $f(x_{1},\cdots,x_{m})$ and a $n$-variable function $g(x_{1},\cdots,x_{n})$, we define the star product $\ast$ as follows.
\begin{align}
(f\ast g)(x_{1},\cdots,x_{m+n}):=\text{Sym}\bigg[f(x_{1},\cdots,x_{m})g(x_{m+1},\cdots,x_{m+n})\prod_{\genfrac{}{}{0pt}{1}{1\leq{\alpha}\leq{m}}{m+1\leq{\beta}\leq{m+n}}}\omega(x_{\alpha},x_{\beta})\bigg].
\end{align}
For a partition $\lambda$, we define $\varepsilon_{\lambda}(q;x)$ as
\begin{align}
\varepsilon_{\lambda}(q;x):=\varepsilon_{\lambda_{1}}(q;x)\ast\cdots\ast\varepsilon_{\lambda_{\ell(\lambda)}}(q;x).
\end{align}

Set $\mathcal{A}_{0}:=\mathbf{Q}(q,t)$, $\mathcal{A}_{n}:=\text{span}\{\varepsilon_{\lambda}(q;x) : |\lambda|=n\} \,(n\geq{1})$. We define the trigonometric Feigin-Odesskii algebra $\mathcal{A}:=\bigoplus_{n\geq{0}}\mathcal{A}_{n}$ whose algebraic structure is given by the star product $\ast$. 
\end{definition}

\begin{remark}
The definition of the trigonometric Feigin-Odesskii algebra $\mathcal{A}$ above is a reduced version of the paper [8]. For instance, there would be a question why the function $\varepsilon_{n}(q;x)$ appears. For more detail of the trigonometric Feigin-Odesskii algebra $\mathcal{A}$, see [8].
\end{remark}

In the paper [8], the following fact is shown.

\begin{prop}[] 
The trigonometric Feigin-Odesskii algebra $(\mathcal{A},\ast)$ is unital, associative, and commutative.
\end{prop}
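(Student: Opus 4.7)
The plan is to verify the three properties separately; each reduces to a manipulation of nested symmetrizers and $\omega$-factors. Throughout I will freely use that $\varepsilon_{n}(q;x)$ is $\mathfrak{S}_{n}$-symmetric, which follows by inspecting the defining product in (2.17): swapping $x_{a}\leftrightarrow x_{b}$ sends $\frac{(x_{a}-qx_{b})(x_{a}-q^{-1}x_{b})}{(x_{a}-x_{b})^{2}}$ to itself (the minus signs from numerator and denominator match up), so every $\varepsilon_{\lambda}$ is symmetric and hence so is every element of $\mathcal{A}_{n}$.

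For the unit, the natural candidate is $1\in\mathcal{A}_{0}=\mathbf{Q}(q,t)$. Taking $g=1$ in the defining formula (2.20), the index set $\{(\alpha,\beta):1\leq\alpha\leq m,\ m+1\leq\beta\leq m\}$ is empty, so the product of $\omega$'s disappears and $f\ast 1=\mathrm{Sym}[f(x_{1},\dots,x_{m})]=f$ because $f$ is already symmetric. The calculation for $1\ast f$ is identical.

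For associativity, fix $f\in\mathcal{A}_{\ell}$, $g\in\mathcal{A}_{m}$, $h\in\mathcal{A}_{n}$. Expanding $(f\ast g)\ast h$ puts an inner $\mathrm{Sym}$ on the first $\ell+m$ variables and an outer $\mathrm{Sym}$ on all $\ell+m+n$. Since the outer $\mathrm{Sym}$ is already $\mathfrak{S}_{\ell+m}$-invariant on any function, the two symmetrizers collapse to a single $\mathrm{Sym}_{\ell+m+n}$, and one is left with
\begin{align*}
((f\ast g)\ast h)(x) = \mathrm{Sym}\Big[f(x_{1},\dots,x_{\ell})\,g(x_{\ell+1},\dots,x_{\ell+m})\,h(x_{\ell+m+1},\dots,x_{\ell+m+n})\prod_{\alpha\in F,\,\beta\in G}\omega(x_{\alpha},x_{\beta})\prod_{\alpha\in F,\,\beta\in H}\omega(x_{\alpha},x_{\beta})\prod_{\alpha\in G,\,\beta\in H}\omega(x_{\alpha},x_{\beta})\Big],
\end{align*}
where $F,G,H$ denote the three contiguous blocks of indices. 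A parallel calculation for $f\ast(g\ast h)$ yields the same expression, so the two sides agree. The step to be careful about here is bookkeeping the $\omega$-factors between $F$ and $H$: they arise from the inner $\ast$ on one side and the outer $\ast$ on the other, but the product over $F\times H$ is the same.

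Commutativity is the genuine obstacle. The strategy is to apply the block-swap permutation $\sigma\in\mathfrak{S}_{m+n}$ sending $(1,\dots,n\mid n+1,\dots,n+m)$ to $(m+1,\dots,m+n\mid 1,\dots,m)$ inside the $\mathrm{Sym}$ that defines $g\ast f$; since $\mathrm{Sym}$ is $\mathfrak{S}_{m+n}$-invariant this reshuffles the expression into
\begin{align*}
(g\ast f)(x) = \mathrm{Sym}\Big[f(x_{1},\dots,x_{m})\,g(x_{m+1},\dots,x_{m+n})\prod_{\alpha\leq m<\beta}\omega(x_{\beta},x_{\alpha})\Big],
\end{align*}
so commutativity becomes the identity
\begin{align*}
\mathrm{Sym}\Big[F(x)\prod_{\alpha\leq m<\beta}\omega(x_{\alpha},x_{\beta})\Big] = \mathrm{Sym}\Big[F(x)\prod_{\alpha\leq m<\beta}\omega(x_{\beta},x_{\alpha})\Big]
\end{align*}
with $F=f\cdot g$. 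This is the hard step: pointwise $\omega(x,y)\neq\omega(y,x)$ and the ratio is a nontrivial rational function (in fact essentially the Ding-Iohara structure function $g(x/y)$ up to sign, by a short computation). My approach would be to reduce to the generating case $f=\varepsilon_{m}$, $g=\varepsilon_{n}$ by linearity, combine the Vandermonde-like denominator of $\varepsilon_{m}\varepsilon_{n}$ with the difference $\prod\omega(x_{\alpha},x_{\beta})-\prod\omega(x_{\beta},x_{\alpha})$, and show the resulting rational function is antisymmetric under enough transpositions to be killed by $\mathrm{Sym}$; concretely one checks that after clearing the cubes $(x_{\alpha}-x_{\beta})^{3}$ the numerator is divisible by a Vandermonde factor in the union of indices, so the quotient becomes $\mathfrak{S}_{m+n}$-antisymmetric, whence $\mathrm{Sym}$ annihilates it. The explicit cancellation between the factor $(x_{a}-q^{-1}x_{b})$ in $\varepsilon_{m+n}$ and the $(x_{\alpha}-q^{-1}x_{\beta})$ appearing in $\omega$ is what makes this work, and identifying it cleanly is the one genuinely computational ingredient in the proof.
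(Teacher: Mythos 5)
Your arguments for the unit and for associativity are correct and standard: elements of $\mathcal{A}_{n}$ are symmetric, $f\ast 1=\mathrm{Sym}[f]=f$, and the nested symmetrizers collapse because the outer $\omega$-factors and the extra block are invariant under the inner symmetric group, yielding the manifestly associative three-block formula. (Note, for what it's worth, that the paper itself offers no proof of this proposition --- it is quoted from [8] --- so your attempt has to stand entirely on its own.) Your reduction of commutativity to the identity $\mathrm{Sym}\big[F\prod_{\alpha\leq m<\beta}\omega(x_{\alpha},x_{\beta})\big]=\mathrm{Sym}\big[F\prod_{\alpha\leq m<\beta}\omega(x_{\beta},x_{\alpha})\big]$ is also correct, and you rightly flag it as the crux. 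One small correction along the way: the ratio is exactly $\omega(x,y)/\omega(y,x)=g(x/y)$, with no extra sign --- this is precisely what makes (2.22) work.

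The proposed mechanism for the crux, however, is false. You claim that after clearing denominators the difference becomes divisible by the full Vandermonde, so that the integrand is $\mathfrak{S}_{m+n}$-antisymmetric and $\mathrm{Sym}$ kills it pointwise. Take $m=1$, $n=2$, i.e.\ $f=\varepsilon_{1}=1$, $g=\varepsilon_{2}$. The relevant integrand is $D=\varepsilon_{2}(q;x_{2},x_{3})\big[\omega(x_{1},x_{2})\omega(x_{1},x_{3})-\omega(x_{2},x_{1})\omega(x_{3},x_{1})\big]$, which is \emph{invariant} under $x_{2}\leftrightarrow x_{3}$ (each bracket term and $\varepsilon_{2}$ are), and it is not identically zero: at $q=2$, $t=3$, $(x_{1},x_{2},x_{3})=(1,3,5)$ the backward product vanishes (the factor $x_{2}-tx_{1}=0$) while the forward product and $\varepsilon_{2}(3,5)$ do not. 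A nonzero function invariant under a transposition cannot be antisymmetric, so the claimed Vandermonde divisibility fails, and $\mathrm{Sym}[D]=0$ cannot hold for the pointwise reason you give. Indeed, evaluating numerically at this point, the three $\mathfrak{S}_{3}$-coset contributions to $\mathrm{Sym}[D]$ are approximately $-0.335$, $+2.215$, $-1.880$: individually nonzero, cancelling only in the sum. Worse, the identity you are trying to prove is simply false for general block-symmetric $F$ (already for $m=n=1$, $F=x_{1}$ one gets $\tfrac{1}{2}(x_{1}-x_{2})\{\omega(x_{1},x_{2})-\omega(x_{2},x_{1})\}\neq 0$ after symmetrizing), so any correct proof must genuinely exploit the zeros $x_{a}=q^{\pm 1}x_{b}$ of the $\varepsilon$'s against the poles and zeros of $\omega$ --- the cancellation you gesture at in your last sentence but never actually carry out. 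This cross-coset cancellation is the real content of the commutativity statement in [8], and your proposal does not supply it.
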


\subsection{Commutative families $\mathcal{M}$, $\mathcal{M}^{\prime}$}
Here we give an overview of the construction of the commutative families of the Macdonald operator by using the Ding-Iohara algebra and the trigonometric Feigin-Odesskii algebra.

\begin{definition}[\textbf{Map $\mathcal{O}$}] Define a linear map $\mathcal{O} : \mathcal{A} \to \text{End}(\mathcal{F})$ as
\begin{align}
\mathcal{O}(f):=\bigg[f(z_{1},\cdots,z_{n})\prod_{1\leq{i<j}\leq{n}}\omega(z_{i},z_{j})^{-1}\eta(z_{1})\cdots\eta(z_{n})\bigg]_{1} \quad(f\in\mathcal{A}_{n}).
\end{align}
Here $[f(z_{1},\cdots,z_{n})]_{1}$ denotes the constant term of $f(z_{1},\cdots,z_{n})$ in $z_{1},\cdots,z_{n}$, and we extend the map $\mathcal{O}$ linearly.
\end{definition}

\begin{prop}[] 
The map $\mathcal{O}$ and the star product $\ast$ are compatible : for $f,g\in\mathcal{A}$, we have $\mathcal{O}(f\ast g)=\mathcal{O}(f)\mathcal{O}(g)$.
\end{prop}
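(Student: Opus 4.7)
The plan is to unfold both sides of the identity as constant terms in the same set of $m+n$ formal variables and match them up, with the heavy lifting done by an $\mathfrak{S}_{m+n}$-symmetry of the $\eta$-part. By linearity I may assume $f\in\mathcal{A}_m$ and $g\in\mathcal{A}_n$.

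The first step — and the main obstacle — is the following symmetry lemma: the operator-valued formal series
\[
F(u_1,\ldots,u_N) := \prod_{1\leq i<j\leq N}\omega(u_i,u_j)^{-1}\,\eta(u_1)\cdots\eta(u_N)
\]
is invariant under the natural permutation action of $\mathfrak{S}_N$. I would check this on adjacent transpositions $(k,k+1)$: swapping $u_k$ and $u_{k+1}$ alters the $\omega$-product by the single ratio $\omega(u_{k+1},u_k)/\omega(u_k,u_{k+1})$ and reorders the operator product by commuting $\eta(u_k)$ past $\eta(u_{k+1})$. The Ding-Iohara quadratic relation of Definition 2.6 and Proposition 2.7 supplies $\eta(u_k)\eta(u_{k+1})=g(u_k/u_{k+1})\,\eta(u_{k+1})\eta(u_k)$, so everything reduces to the elementary identity $\omega(x,y)=g(x/y)\,\omega(y,x)$, which I would verify directly from the definitions of $\omega$ and $g$ (using $g(x^{-1})=g(x)^{-1}$).

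Granted this symmetry, the second step removes the symmetrizer of the star product from inside $[\,\cdot\,]_1$: since $F$ is $\mathfrak{S}_{m+n}$-symmetric, relabeling the integration variables $u_i\mapsto u_{\sigma^{-1}(i)}$ shows that $[\,(\sigma\cdot P)(u)\,F(u)\,]_1=[\,P(u)\,F(u)\,]_1$ for every $\sigma$, so the $\frac{1}{(m+n)!}\sum_\sigma$ in $f\ast g$ collapses. Thus $\mathcal{O}(f\ast g)$ equals the constant term of
\[
f(u_1,\ldots,u_m)\,g(u_{m+1},\ldots,u_{m+n})\!\!\prod_{\substack{1\leq\alpha\leq m\\ m+1\leq\beta\leq m+n}}\!\!\omega(u_\alpha,u_\beta)\cdot F(u_1,\ldots,u_{m+n}).
\]

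For the third step I would factor the $\omega$-product inside $F$ as
\[
\prod_{1\leq i<j\leq m+n}\omega(u_i,u_j)=\prod_{1\leq i<j\leq m}\omega(u_i,u_j)\cdot\prod_{m+1\leq i<j\leq m+n}\omega(u_i,u_j)\cdot\prod_{\substack{1\leq\alpha\leq m\\ m+1\leq\beta\leq m+n}}\omega(u_\alpha,u_\beta),
\]
so that the inter-block factor cancels against the cross term produced by $f\ast g$. On the other side, $\mathcal{O}(f)\mathcal{O}(g)$ is a composition of two operators obtained by constant terms in disjoint sets of variables, so mode-by-mode in the Fourier expansions one has $[A(z)]_1\,[B(w)]_1=[A(z)B(w)]_{1,1}$; relabeling $z\to(u_1,\ldots,u_m)$ and $w\to(u_{m+1},\ldots,u_{m+n})$ then produces exactly the integrand from step two, and the identity $\mathcal{O}(f\ast g)=\mathcal{O}(f)\mathcal{O}(g)$ follows. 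The only subtlety to watch is that all of this happens in a formal Laurent series framework: one must expand $\omega(u_i,u_j)^{-1}$ and the $\eta$-OPEs consistently with the convention underlying $[\,\cdot\,]_1$.
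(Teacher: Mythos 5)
Your proposal is correct and takes essentially the same route as the paper's own proof: the key symmetry of $\prod_{1\leq i<j\leq N}\omega(u_i,u_j)^{-1}\eta(u_1)\cdots\eta(u_N)$ deduced from $\eta(z)\eta(w)=g(z/w)\,\eta(w)\eta(z)$ (equivalently, your identity $\omega(x,y)=g(x/y)\,\omega(y,x)$, which is exactly the paper's relation $\omega(z,w)^{-1}\eta(z)\eta(w)=\omega(w,z)^{-1}\eta(w)\eta(z)$), the collapse of the symmetrizer under the constant term $[\,\cdot\,]_1$, and the factorization of the $\omega$-product so that the cross factors cancel and the constant term splits into $\mathcal{O}(f)\mathcal{O}(g)$. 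Your closing remark on expansion conventions in the formal Laurent framework is a sound caveat that the paper leaves implicit.
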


\begin{proof}[\textit{Proof}] 
To prove the proposition, for $f\in\mathcal{A}_{m}$ and $g\in\mathcal{A}_{n}$, we show $\mathcal{O}(f\ast g)=\mathcal{O}(f)\mathcal{O}(g)$. First we have
\begin{align*}
&\quad \mathcal{O}(f\ast g)(x_{1},\cdots,x_{m+n}) \\
&=\bigg[\text{Sym}\bigg(f(x_{1},\cdots,x_{m})g(x_{m+1},\cdots,x_{m+n})\prod_{\genfrac{}{}{0pt}{1}{1\leq{\alpha}\leq{m}}{m+1\leq{\beta}\leq{m+n}}}\omega(x_{\alpha},x_{\beta})\bigg) \\
&\hskip 5cm \times \prod_{1\leq{i<j}\leq{m+n}}\omega(x_{i},x_{j})^{-1}\eta(x_{1})\cdots\eta(x_{m+n})\bigg]_{1}.
\end{align*}
Then from the relation 
\begin{align*}
\eta(z)\eta(w)=g\Big(\frac{z}{w}\Big)\eta(w)\eta(z),
\end{align*}
we have the following :
\begin{align}
\frac{1}{\omega(z,w)}\eta(z)\eta(w)=\frac{1}{\omega(w,z)}\eta(w)\eta(z).
\end{align}
The relation shows that the operator-valued function 
\begin{align}
\prod_{1\leq{i<j}\leq{N}}\omega(x_{i},x_{j})^{-1}\eta(x_{1})\cdots\eta(x_{N})
\end{align}
is symmetric in $x_{1},\cdots,x_{N}$. Further we have
\begin{align*}
[\text{Sym}(F(x_{1},\cdots,x_{N}))]_{1}=[F(x_{1},\cdots,x_{N})]_{1}.
\end{align*}
This follows from the fact that constant terms are invariant under the action of the symmetric group. In addition for a symmetric function $f(x_{1},\cdots,x_{N})$, we have
\begin{align*}
\sigma(f(x_{1},\cdots,x_{N})g(x_{1},\cdots,x_{N}))=f(x_{1},\cdots,x_{N})\sigma(g(x_{1},\cdots,x_{N})) \quad (\sigma\in{\mathfrak{S}_{N}}).
\end{align*}
Hence we have
\begin{align*}
\text{Sym}(f(x_{1},\cdots,x_{N})g(x_{1},\cdots,x_{N}))=f(x_{1},\cdots,x_{N})\text{Sym}(g(x_{1},\cdots,x_{N})).
\end{align*}
From them we have the following.
\begin{align*}
\mathcal{O}(f\ast g)
&=\bigg[\text{Sym}\bigg(f(x_{1},\cdots,x_{m})g(x_{m+1},\cdots,x_{m+n})\prod_{\genfrac{}{}{0pt}{1}{1\leq{\alpha}\leq{m}}{m+1\leq{\beta}\leq{m+n}}}\omega(x_{\alpha},x_{\beta}) \\
&\hskip 5cm \times\prod_{1\leq{i<j}\leq{m+n}}\omega(x_{i},x_{j})^{-1}\eta(x_{1})\cdots\eta(x_{m+n})\bigg)\bigg]_{1} \\
&=\bigg[f(x_{1},\cdots,x_{m})g(x_{m+1},\cdots,x_{m+n})\prod_{\genfrac{}{}{0pt}{1}{1\leq{\alpha}\leq{m}}{m+1\leq{\beta}\leq{m+n}}}\omega(x_{\alpha},x_{\beta}) \\
&\hskip 5cm \times\prod_{1\leq{i<j}\leq{m+n}}\omega(x_{i},x_{j})^{-1}\eta(x_{1})\cdots\eta(x_{m+n})\bigg]_{1} \\
&=\bigg[f(x_{1},\cdots,x_{m})\prod_{1\leq{i<j}\leq{m}}\omega(x_{i},x_{j})^{-1}\eta(x_{1})\cdots\eta(x_{m}) \\
&\hskip 1cm \times g(x_{m+1},\cdots,x_{m+n})\prod_{m+1\leq{i<j}\leq{m+n}}\omega(x_{i},x_{j})^{-1}\eta(x_{m+1})\cdots\eta(x_{m+n})\bigg]_{1} \\
&=\mathcal{O}(f)\mathcal{O}(g). \quad \qed
\end{align*}
\end{proof}

The trigonometric Feigin-Odesskii algebra $\mathcal{A}$ is commutative by means of the star product $\ast$, therefore we have the following corollary.

Let $V$ be a $\mathbf{C}$-vector space and $T : V \to V$ be a $\mathbf{C}$-linear operator. Then for a subset $W\subset V$, the symbol $T|_{W}$ denotes the restriction of $T$ on $W$. For a subset $M\subset\text{End}_{\mathbf{C}}(V)$, we use the symbol $M|_{W}:=\{T|_{W} : T\in M\}$ $(W\subset V)$.

\begin{cor}[\textbf{Commutative family $\mathcal{M}$}] 
(1) Set $\mathcal{M}:=\mathcal{O}(\mathcal{A})$. The space $\mathcal{M}$ consists of operators commuting with each other : $[\mathcal{O}(f),\mathcal{O}(g)]=0$ $(f,g\in\mathcal{A})$. 

(2) The space $\mathcal{M}|_{\mathbf{C}\phi_{N}(x)|0 \rangle}$ is a set of commuting $q$-difference operators which contains the Macdonald operator $H_{N}(q,t)$ (commutative family of the Macdonald operator $H_{N}(q,t)$).
\end{cor}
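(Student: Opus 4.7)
The plan is to reduce both claims directly to the earlier Propositions~2.11, 2.12 and 2.1; no new computation is really required.

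For part~(1), I would first recall from Proposition~2.12 that $\mathcal{O} : (\mathcal{A},\ast) \to \mathrm{End}(\mathcal{F})$ is a homomorphism, i.e.\ $\mathcal{O}(f\ast g)=\mathcal{O}(f)\mathcal{O}(g)$ for all $f,g\in\mathcal{A}$. Combining this with the commutativity of the star product established in Proposition~2.11, for any $f,g\in\mathcal{A}$ we obtain
\begin{align*}
\mathcal{O}(f)\mathcal{O}(g)=\mathcal{O}(f\ast g)=\mathcal{O}(g\ast f)=\mathcal{O}(g)\mathcal{O}(f),
\end{align*}
so every pair of operators in $\mathcal{M}=\mathcal{O}(\mathcal{A})$ commutes. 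Restriction to any subspace of $\mathcal{F}$ preserves commutation relations, hence $\mathcal{M}|_{\mathbf{C}\phi_{N}(x)|0\rangle}$ also consists of mutually commuting operators, which takes care of the commuting part of (2).

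For the remaining content of (2), namely that the Macdonald operator itself belongs to this restricted family, I would exhibit an explicit element of $\mathcal{A}$ whose image under $\mathcal{O}$ recovers $H_{N}(q,t)$ on $\phi_{N}(x)|0\rangle$. The natural candidate is $\varepsilon_{1}(q;x)\in\mathcal{A}_{1}$, which equals $1$ because the defining product in (2.16) is empty. Then by Definition~2.10, $\mathcal{O}(\varepsilon_{1})=[\eta(z)]_{1}$, and Proposition~2.1~(1) gives
\begin{align*}
\mathcal{O}(\varepsilon_{1})\phi_{N}(x)|0\rangle = t^{-N}\{(t-1)H_{N}(q,t)+1\}\phi_{N}(x)|0\rangle.
\end{align*}
Since $1\in\mathcal{A}_{0}$ maps to the identity operator, the restriction $H_{N}(q,t)|_{\mathbf{C}\phi_{N}(x)|0\rangle}$ is a scalar combination of $\mathcal{O}(\varepsilon_{1})|_{\mathbf{C}\phi_{N}(x)|0\rangle}$ and the identity, both of which lie in $\mathcal{M}|_{\mathbf{C}\phi_{N}(x)|0\rangle}$. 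This exhibits $H_{N}(q,t)$ as an element of the commuting family, completing (2).

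There is essentially no obstacle here: all the work has already been done in Proposition~2.12 (the real content, where Wick's relation $\omega(z,w)^{-1}\eta(z)\eta(w)=\omega(w,z)^{-1}\eta(w)\eta(z)$ is exploited to symmetrize the integrand) and Proposition~2.11 (commutativity of the Feigin--Odesskii star product). The only minor point to verify is that $\mathcal{O}$ sends $\mathcal{A}_{0}=\mathbf{Q}(q,t)$ to scalar multiples of the identity, which is immediate from the convention that the empty product of $\eta(z_{i})$'s is $1$, so the two ingredients $\mathcal{O}(\varepsilon_{1})$ and $\mathcal{O}(1)$ together express $H_{N}(q,t)$ on the required subspace.
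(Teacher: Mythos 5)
Your proposal is correct and follows essentially the same route as the paper: part (1) via $\mathcal{O}(f)\mathcal{O}(g)=\mathcal{O}(f\ast g)=\mathcal{O}(g\ast f)=\mathcal{O}(g)\mathcal{O}(f)$, and part (2) via the key identity $\mathcal{O}(\varepsilon_{1}(q;z))\phi_{N}(x)|0\rangle=[\eta(z)]_{1}\phi_{N}(x)|0\rangle=t^{-N}\{(t-1)H_{N}(q,t)+1\}\phi_{N}(x)|0\rangle$ from Proposition 2.1, with the affine rescaling recovering $H_{N}(q,t)$ itself. The only cosmetic difference is that the paper additionally remarks that each generator $\mathcal{O}(\varepsilon_{r}(q;z))$ acts on $\phi_{N}(x)|0\rangle$ as an $r$-th order $q$-difference operator, justifying the phrase ``set of commuting $q$-difference operators,'' a point you leave implicit.
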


\begin{proof}[\textit{Proof}]
(1) This statement follows from the commutativity of $\mathcal{A}$ and the compatibility of the star product $\ast$ and the map $\mathcal{O}$. We have $\mathcal{O}(f\ast g)=\mathcal{O}(f)\mathcal{O}(g)$ and also have $\mathcal{O}(f\ast g)=\mathcal{O}(g\ast f)=\mathcal{O}(g)\mathcal{O}(f)$. This shows $[\mathcal{O}(f),\mathcal{O}(g)]=0$. 

(2) Due to the free field realization of the Macdonald operator $H_{N}(q,t)$, the operator $\mathcal{O}(\varepsilon_{r}(q;z))$ $(r\in\mathbf{Z}_{>0})$ acts on $\phi_{N}(x)|0 \rangle$ $(N\in\mathbf{Z}_{>0})$ as a $r$-th order $q$-difference operator. By the fact that $\mathcal{M}=\mathcal{O}(\mathcal{A})$ is generated by $\{\mathcal{O}(\varepsilon_{r}(q;z))\}_{r\in\mathbf{Z}_{>0}}$ and the relation
\begin{align*}
\mathcal{O}(\varepsilon_{1}(q;z))\phi_{N}(x)|0 \rangle=[\eta(z)]_{1}\phi_{N}(x)|0 \rangle=t^{-N}\{(t-1)H_{N}(q,t)+1\}\phi_{N}(x)|0 \rangle
\end{align*}
and (1) in the corollary 2.13, the restriction $\mathcal{M}|_{\mathbf{C}\phi_{N}(x)|0 \rangle}$ is a space of commuting $q$-difference operators which contains the Macdonald operator $H_{N}(q,t)$. \quad $\Box$
\end{proof}

The Macdonald operator $H_{N}(q^{-1},t^{-1})$ is reproduced from the operator $\xi(z)$. By this fact, we can construct another commutative family of the Macdonald operator.

\begin{definition}[\textbf{Map $\mathcal{O}^{\prime}$}] Set a function $\omega^{\prime}(x,y)$ as follows.
\begin{align}
\omega^{\prime}(x,y):=\frac{(x-qy)(x-t^{-1}y)(x-q^{-1}ty)}{(x-y)^{3}}.
\end{align}
Define a linear map $\mathcal{O}^{\prime} : \mathcal{A} \to \text{End}(\mathcal{F})$ as
\begin{align}
\mathcal{O}^{\prime}(f):=\bigg[f(z_{1},\cdots,z_{n})\prod_{1\leq{i<j}\leq{n}}\omega^{\prime}(z_{i},z_{j})^{-1}\xi(z_{1})\cdots\xi(z_{n})\bigg]_{1} \quad(f\in\mathcal{A}_{n}).
\end{align}
We extend the map $\mathcal{O}^{\prime}$ linearly.
\end{definition}

\begin{lemma}
Define another star product $\ast^{\prime}$ as follows :
\begin{align*}
(f\ast^{\prime} g)(x_{1},\cdots,x_{m+n}):=\text{\rm Sym}\bigg[f(x_{1},\cdots,x_{m})g(x_{m+1},\cdots,x_{m+n})\prod_{\genfrac{}{}{0pt}{1}{1\leq{\alpha}\leq{m}}{m+1\leq{\beta}\leq{m+n}}}\omega^{\prime}(x_{\alpha},x_{\beta})\bigg].
\end{align*}
In the trigonometric Feigin-Odesskii algebra $\mathcal{A}$, we have $\ast^{\prime}=\ast$.
\end{lemma}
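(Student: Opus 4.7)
The plan is to first rewrite $\omega^{\prime}$ in terms of $\omega$, then exploit the $\mathfrak{S}_{m+n}$-invariance of $\text{Sym}$ to relabel variables inside the symmetrizer and convert $f\ast^{\prime}g$ into $g\ast f$, and finally invoke the commutativity of $(\mathcal{A},\ast)$ established in Proposition 2.11.

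First I would verify the rational-function identity $\omega^{\prime}(x,y)=\omega(y,x)$. Factoring the numerator of $\omega(y,x)$ as $y-q^{-1}x=-q^{-1}(x-qy)$, $y-tx=-t(x-t^{-1}y)$, $y-qt^{-1}x=-qt^{-1}(x-q^{-1}ty)$, and using $(y-x)^{3}=-(x-y)^{3}$, the scalar prefactor $(-q^{-1})(-t)(-qt^{-1})=-1$ cancels with the sign from the denominator and produces exactly $\omega^{\prime}(x,y)$. Consequently
\[
\prod_{\alpha\leq m<\beta}\omega^{\prime}(x_{\alpha},x_{\beta})=\prod_{\alpha\leq m<\beta}\omega(x_{\beta},x_{\alpha}),
\]
so $f\ast^{\prime}g$ equals the symmetrization of $f(x_{1},\ldots,x_{m})\,g(x_{m+1},\ldots,x_{m+n})\prod_{\alpha\leq m<\beta}\omega(x_{\beta},x_{\alpha})$.

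Next I would use the trivial identity $\text{Sym}[F(x_{\tau(1)},\ldots,x_{\tau(N)})]=\text{Sym}[F(x_{1},\ldots,x_{N})]$ for any $\tau\in\mathfrak{S}_{N}$ (reindex $\pi\mapsto\pi\tau^{-1}$ in the sum over $\mathfrak{S}_{N}$). Applying the block-swap $\tau(i)=i+n$ for $i\leq m$, $\tau(i)=i-m$ for $i>m$ inside the above symmetrizer, $f(x_{1},\ldots,x_{m})$ turns into $f(x_{n+1},\ldots,x_{n+m})$, $g(x_{m+1},\ldots,x_{m+n})$ turns into $g(x_{1},\ldots,x_{n})$, and the reindexing $a=\beta-m$, $b=\alpha+n$ rewrites the product as $\prod_{a\leq n<b}\omega(x_{a},x_{b})$. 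The resulting symmetrizer is precisely the definition of $g\ast f$, so $f\ast^{\prime}g=g\ast f$. Proposition 2.11 then yields $g\ast f=f\ast g$, and the lemma follows.

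The only calculation that needs care is the identity $\omega^{\prime}(x,y)=\omega(y,x)$; everything else is bookkeeping on indices together with the already-established commutativity of $\ast$. I do not expect a real obstacle: morally the result is forced by the symmetric roles $\omega$ and $\omega^{\prime}$ play under the exchange $q\leftrightarrow q^{-1}$, $t\leftrightarrow t^{-1}$, which is precisely the exchange underlying the relation between the currents $\eta(z)$ and $\xi(z)$.
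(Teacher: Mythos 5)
Your proposal is correct and follows essentially the same route as the paper's own proof: the identity $\omega^{\prime}(x,y)=\omega(y,x)$, a block-swap relabeling inside $\mathrm{Sym}$ to turn $f\ast^{\prime}g$ into $g\ast f$, and the commutativity of $(\mathcal{A},\ast)$ to conclude. Your explicit verification of $\omega^{\prime}(x,y)=\omega(y,x)$ (with the prefactor $(-q^{-1})(-t)(-qt^{-1})=-1$ cancelling the sign of $(y-x)^{3}$) is accurate; the paper simply cites this relation without computation.
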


\begin{proof}[\textit{Proof}]
From the relation $\omega(x,y)=\omega^{\prime}(y,x)$, for $f\in\mathcal{A}_{m}$, $g\in\mathcal{A}_{n}$ we have the following.
\begin{align*}
(f\ast^{\prime} g)(x_{1},\cdots,x_{m+n})&=\text{Sym}\bigg[f(x_{1},\cdots,x_{m})g(x_{m+1},\cdots,x_{m+n})\prod_{\genfrac{}{}{0pt}{1}{1\leq{\alpha}\leq{m}}{m+1\leq{\beta}\leq{m+n}}}\omega^{\prime}(x_{\alpha},x_{\beta})\bigg] \\
&=\text{Sym}\bigg[f(x_{1},\cdots,x_{m})g(x_{m+1},\cdots,x_{m+n})\prod_{\genfrac{}{}{0pt}{1}{1\leq{\alpha}\leq{m}}{m+1\leq{\beta}\leq{m+n}}}\omega(x_{\beta},x_{\alpha})\bigg] \\
&=\text{Sym}\bigg[g(x_{1},\cdots,x_{n})f(x_{n+1},\cdots,x_{n+m})\prod_{\genfrac{}{}{0pt}{1}{1\leq{\alpha}\leq{n}}{n+1\leq{\beta}\leq{n+m}}}\omega(x_{\alpha},x_{\beta})\bigg] \\
&=(g \ast f)(x_{1},\cdots,x_{m+n}) \\
&=(f\ast g)(x_{1},\cdots,x_{m+n}) \quad (\because \, \text{$\mathcal{A}$ is commutative by means of $\ast$}).
\end{align*}
Since the relation holds for any $f\in\mathcal{A}_{m}$, $g\in\mathcal{A}_{n}$, we have $\ast^{\prime}=\ast$. \quad $\Box$
\end{proof}

We can check the map $\mathcal{O}^{\prime}$ and the star product $\ast^{\prime}$ are compatible in the similar way of the proof of the proposition 2.12. Furthermore by the lemma 2.15 as $\ast^{\prime}=\ast$, we have the following corollary.

\begin{cor}[\textbf{Commutative family $\mathcal{M}^{\prime}$}]
(1) Set $\mathcal{M}^{\prime}:=\mathcal{O}^{\prime}(\mathcal{A})$. The space $\mathcal{M}^{\prime}$ consists of operators commuting with each other. 

(2) The space $\mathcal{M}^{\prime}|_{\mathbf{C}\phi_{N}(x)|0 \rangle}$ is a set of commuting $q$-difference operators which contains the Macdonald operator $H_{N}(q^{-1},t^{-1})$ (commutative family of the Macdonald operator $H_{N}(q^{-1},t^{-1})$).
\end{cor}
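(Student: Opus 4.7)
The plan is to mirror, for the operator $\xi(z)$ and the star product $\ast'$, the argument that established Corollary 2.14. Part (1) splits into a compatibility statement $\mathcal{O}'(f\ast' g)=\mathcal{O}'(f)\mathcal{O}'(g)$ plus the commutativity of $(\mathcal{A},\ast)$ together with Lemma 2.15; Part (2) is then a consequence of Proposition 2.1(2) and the fact that $\mathcal{M}'$ is generated by the images of $\{\varepsilon_r(q;z)\}_{r\geq 1}$.

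First I would establish the $\xi$-analog of the key symmetry (2.23). From the Ding--Iohara relation $x^{-}(z)x^{-}(w)=g(z/w)^{-1}x^{-}(w)x^{-}(z)$ and the free field realization (Proposition 2.8), one gets $\xi(z)\xi(w)=g(z/w)^{-1}\xi(w)\xi(z)$. A direct computation (the same one that underlies Lemma 2.15) shows $\omega'(z,w)=\omega(w,z)$, and hence
\begin{align*}
\frac{\omega'(z,w)}{\omega'(w,z)}=\frac{\omega(w,z)}{\omega(z,w)}=g(z/w)^{-1}.
\end{align*}
Therefore $\omega'(z,w)^{-1}\xi(z)\xi(w)=\omega'(w,z)^{-1}\xi(w)\xi(z)$, so the operator-valued function
\begin{align*}
\prod_{1\leq i<j\leq N}\omega'(x_{i},x_{j})^{-1}\xi(x_{1})\cdots\xi(x_{N})
\end{align*}
is symmetric in $x_{1},\dots,x_{N}$. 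With this in hand, the computation proving Proposition 2.12 transports verbatim, with $\eta\leftrightarrow\xi$ and $\omega\leftrightarrow\omega'$, and yields $\mathcal{O}'(f\ast' g)=\mathcal{O}'(f)\mathcal{O}'(g)$ for all $f,g\in\mathcal{A}$.

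Part (1) then follows by combining three facts: (i) the compatibility just proved, (ii) Lemma 2.15 which gives $\ast'=\ast$ on $\mathcal{A}$, and (iii) commutativity of $(\mathcal{A},\ast)$ (Proposition 2.11). Explicitly,
\begin{align*}
\mathcal{O}'(f)\mathcal{O}'(g)=\mathcal{O}'(f\ast' g)=\mathcal{O}'(f\ast g)=\mathcal{O}'(g\ast f)=\mathcal{O}'(g\ast' f)=\mathcal{O}'(g)\mathcal{O}'(f).
\end{align*}

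For Part (2), I would use the fact that $\mathcal{A}$ is generated, via the star product, by $\{\varepsilon_{r}(q;z)\}_{r\in\mathbf{Z}_{>0}}$, so $\mathcal{M}'=\mathcal{O}'(\mathcal{A})$ is generated by $\{\mathcal{O}'(\varepsilon_{r}(q;z))\}_{r\in\mathbf{Z}_{>0}}$. For $r=1$, $\varepsilon_{1}(q;z)=1$, so $\mathcal{O}'(\varepsilon_{1}(q;z))=[\xi(z)]_{1}$, and Proposition 2.1(2) gives
\begin{align*}
\mathcal{O}'(\varepsilon_{1}(q;z))\phi_{N}(x)|0\rangle=t^{N}\{(t^{-1}-1)H_{N}(q^{-1},t^{-1})+1\}\phi_{N}(x)|0\rangle,
\end{align*}
so $H_{N}(q^{-1},t^{-1})$ sits inside $\mathcal{M}'|_{\mathbf{C}\phi_{N}(x)|0\rangle}$ (modulo a scalar multiple of the identity). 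The higher $\mathcal{O}'(\varepsilon_{r}(q;z))$ act on $\phi_{N}(x)|0\rangle$ as $r$-th order $q$-difference operators by the same residue/delta-function contraction argument that was used in the proof of Proposition 2.1 applied to the $\xi$-field, combined with Part (1) of this corollary.

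The only non-routine point is the identity $\omega'(z,w)=\omega(w,z)$ used to transfer the symmetry from the $\eta$-side to the $\xi$-side; once this is checked the remainder is a cosmetic rewriting of the proofs of Proposition 2.12 and Corollary 2.14.
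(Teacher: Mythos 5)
Your proposal is correct and follows exactly the route the paper intends: compatibility of $\mathcal{O}^{\prime}$ with $\ast^{\prime}$ checked as in Proposition 2.12 (via the symmetry of $\prod_{i<j}\omega^{\prime}(x_{i},x_{j})^{-1}\xi(x_{1})\cdots\xi(x_{N})$, which rests on $\omega^{\prime}(z,w)=\omega(w,z)$ and $\xi(z)\xi(w)=g(z/w)^{-1}\xi(w)\xi(z)$, precisely the identity underlying Lemma 2.15), then Lemma 2.15 to get $\ast^{\prime}=\ast$, commutativity of $(\mathcal{A},\ast)$ for part (1), and Proposition 2.1(2) applied to $\mathcal{O}^{\prime}(\varepsilon_{1}(q;z))=[\xi(z)]_{1}$ for part (2). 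In fact you supply more detail than the paper, which leaves the compatibility check and the $\xi$-side symmetry as remarks preceding the corollary.
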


From the relation $[[\eta(z)]_{1},[\xi(w)]_{1}]=0$, we have the following proposition.

\begin{prop}
The commutative families $\mathcal{M}$, $\mathcal{M}^{\prime}$ satisfy $[\mathcal{M},\mathcal{M}^{\prime}]=0$ : For any $a\in\mathcal{M}$ and $a^{\prime}\in\mathcal{M}^{\prime}$, we have $[a,a^{\prime}]=0$.
\end{prop}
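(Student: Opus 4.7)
The plan is to reduce the full commutativity $[\mathcal{M},\mathcal{M}']=0$ to the scalar identity $[[\eta(z)]_1,[\xi(w)]_1]=0$ by a multi-variable constant-term argument. By bilinearity it suffices to show $[\mathcal{O}(f),\mathcal{O}'(g)]=0$ for arbitrary $f\in\mathcal{A}_m$ and $g\in\mathcal{A}_n$. For such $f,g$ I would rewrite the commutator as a single $(m{+}n)$-fold constant term
\begin{align*}
[\mathcal{O}(f),\mathcal{O}'(g)] = \Big[\, P(z,w)\,\bigl[\eta(z_1)\cdots\eta(z_m),\,\xi(w_1)\cdots\xi(w_n)\bigr]\,\Big]_{1},
\end{align*}
where $P(z,w):=f(z)g(w)\prod_{1\le i<j\le m}\omega(z_i,z_j)^{-1}\prod_{1\le k<\ell\le n}\omega'(w_k,w_\ell)^{-1}$ and $[\cdot]_1$ denotes the constant term in every variable $z_i$ and $w_k$. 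This puts the whole commutator inside one bracket and isolates the problem in the operator commutator $[\prod_i\eta(z_i),\prod_j\xi(w_j)]$.

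Next, I would expand that operator commutator by the Leibniz rule and the defining relation of the Ding--Iohara algebra from Definition 2.6,
\begin{align*}
[\eta(z),\xi(w)] = \frac{(1-q)(1-t^{-1})}{1-qt^{-1}}\bigl\{\delta(\gamma w/z)\,\varphi^{+}(\gamma^{1/2}w) - \delta(\gamma^{-1}w/z)\,\varphi^{-}(\gamma^{-1/2}w)\bigr\}.
\end{align*}
Every term obtained in this way carries one or more delta functions $\delta(\gamma^{\pm 1}w_j/z_i)$; performing the corresponding $z_i$ constant-term integration localizes $z_i = \gamma^{\pm 1}w_j$, thereby evaluating the prefactor $P$ at the constrained points and replacing each contracted pair $\eta(z_i)\xi(w_j)$ by a current $\varphi^{\pm}(\gamma^{\pm 1/2}w_j)$ multiplied by the remaining (uncontracted) $\eta$'s and $\xi$'s.

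The key cancellation then mirrors the scalar case: each residue term carrying a $\delta(\gamma w_j/z_i)\varphi^{+}(\gamma^{1/2}w_j)$ factor is paired with its opposite-sign sibling carrying $\delta(\gamma^{-1}w_j/z_i)\varphi^{-}(\gamma^{-1/2}w_j)$. After performing the remaining $w_j$ constant-term extraction, the $\varphi^{\pm}$ contributions reduce to $[\varphi^{+}(\gamma^{1/2}w_j)]_{1,w_j}-[\varphi^{-}(\gamma^{-1/2}w_j)]_{1,w_j}=1-1=0$, which is exactly the mechanism behind $[[\eta(z)]_1,[\xi(w)]_1]=0$ quoted in the hypothesis. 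Higher-order terms coming from iterated commutators (with two or more delta contractions) can be organized by the number of contracted pairs and treated inductively by the same $\pm$ pairing at each step.

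The main obstacle is the combinatorial bookkeeping of the residues together with checking that the prefactor $P$ is ``symmetric enough'' under the $z_i=\gamma w_j \leftrightarrow z_i=\gamma^{-1}w_j$ swap so that the pairing actually works: one has to verify that the values of $P$ at the constrained points combine (via the symmetry of the products $\prod_{i<j}\omega(z_i,z_j)^{-1}$ and $\prod_{k<\ell}\omega'(w_k,w_\ell)^{-1}$, together with the equality $G(w/z)=G(z/w)$ of the $\eta\xi$ OPE factor as a rational function) to produce the correct weight for the cancellation. Once this is established, all contributions pair off and the constant term vanishes, proving $[\mathcal{O}(f),\mathcal{O}'(g)]=0$ and hence $[\mathcal{M},\mathcal{M}']=0$.
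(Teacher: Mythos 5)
Your route is not the paper's: the paper disposes of this proposition in one line by a spectral argument --- every element of $\mathcal{M}$ and of $\mathcal{M}^{\prime}$ is diagonalized by the Macdonald symmetric functions, and simultaneously diagonalizable operators commute. Your direct constant-term computation is instead modeled on the \emph{elliptic} proof (Lemma 3.19), and it has a genuine gap at its central step.

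The gap is the claimed cancellation ``$[\varphi^{+}(\gamma^{1/2}w_{j})]_{1}-[\varphi^{-}(\gamma^{-1/2}w_{j})]_{1}=1-1=0$''. After the Leibniz expansion (which, incidentally, produces exactly one contraction $[\eta(z_{i}),\xi(w_{j})]$ per term --- the exchange relations of $\varphi^{\pm}$ with $\eta,\xi$ generate only scalar $g$-factors, never new delta functions, so there are no ``two or more contraction'' terms to treat inductively), the $z_{i}$ constant term localizes the prefactor $P$ at $z_{i}=\gamma w_{j}$ in the $\varphi^{+}$ piece and at $z_{i}=\gamma^{-1}w_{j}$ in the $\varphi^{-}$ piece. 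These localized prefactors are \emph{different} functions and, for $m+n>2$, still depend nontrivially on $w_{j}$. Consequently the $w_{j}$ constant term does not factor: writing $h(w_{j})$ for the localized coefficient,
\begin{align*}
\bigl[h(w_{j})\,\varphi^{+}(\gamma^{1/2}w_{j})\bigr]_{1}=\sum_{n\geq 0}h_{n}\,\varphi^{+}_{(n)},
\end{align*}
where $\varphi^{+}(z)=\sum_{n\geq{0}}\varphi^{+}_{(n)}z^{-n}$, and similarly for $\varphi^{-}$ with its creation-type modes. The identity $\varphi^{+}_{(0)}=\varphi^{-}_{(0)}=1$ kills only the mode-zero parts; the cross terms pair the higher modes of $\varphi^{\pm}$ with the Laurent modes of two different localized prefactors, inserted at position $(i,j)$ inside a product of currents with which $\varphi^{\pm}$ do not commute. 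Nothing in your argument --- which uses no property of $f,g$ beyond membership in $\mathcal{A}_{m},\mathcal{A}_{n}$ --- forces these contributions to vanish, term-by-term or after summing over $(i,j)$. The scalar relation $[[\eta(z)]_{1},[\xi(w)]_{1}]=0$ quoted before the proposition is exactly the case $m=n=1$, where $\mathcal{A}_{1}$ consists of constants and the cross terms are absent; your proposal tacitly assumes this feature persists in several variables, and it does not.

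The comparison with the elliptic case shows why no easy repair exists along these lines: there the dressed currents satisfy $[E(p;z),F(p;w)]=c\,\delta(\gamma w/z)\,\Delta_{p}\varphi^{+}(p;\gamma^{1/2}p^{-1}w)$ --- a \emph{single} delta multiplying a $p$-difference --- so each term dies from mere $p$-periodicity of the prefactor, with no cross-mode analysis. The trigonometric commutator has two distinct deltas carrying the two distinct currents $\varphi^{\pm}$, and there is no periodicity to exploit; this is precisely why the paper proves the trigonometric statement spectrally (and conversely must work hard, via Lemma 3.19, in the elliptic case where Macdonald symmetric functions are unavailable). To salvage a direct proof you would have to move $\varphi^{\pm}$ through the remaining currents via the exchange relations and show that the resulting constant terms cancel using the specific structure of the elements of $\mathcal{A}$ --- a substantial computation your sketch does not supply; otherwise, simply invoke the diagonalization argument as the paper does.
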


\begin{proof}[\textit{Proof}]
This proposition follows from the existence of the Macdonald symmetric functions. That is, elements of the commutative families are simultaneously diagonalized by the Macdonald symmetric functions. \quad $\Box$
\end{proof}

From the proposition 2.17, commutative families $\mathcal{M}|_{\mathbf{C}\phi_{N}(x)|0 \rangle}$, $\mathcal{M}^{\prime}|_{\mathbf{C}\phi_{N}(x)|0 \rangle}$ also commute with each other : $[\mathcal{M}|_{\mathbf{C}\phi_{N}(x)|0 \rangle}, \mathcal{M}^{\prime}|_{\mathbf{C}\phi_{N}(x)|0 \rangle}]=0$.

\section{Elliptic theory side}
In this section, we introduce the elliptic Ding-Iohara algebra [14] and explain how commutative families of the elliptic Macdonald operator are constructed [15].

\subsection{Free field realization of the elliptic Macdonald operator}
The elliptic Macdonald operator $H_{N}(q,t,p)$ $(N\in\mathbf{Z}_{>0})$ is defined by
\begin{align}
H_{N}(q,t,p):=\sum_{i=1}^{N}\prod_{j\neq{i}}\frac{\Theta_{p}(tx_{i}/x_{j})}{\Theta_{p}(x_{i}/x_{j})}T_{q,x_{i}}.
\end{align}

Set an algebra of boson $\mathcal{B}_{a,b}$ generated by $\{a_{n}\}_{n\in{\mathbf{Z}\setminus\{0\}}}$, $\{b_{n}\}_{n\in{\mathbf{Z}\setminus\{0\}}}$ and the following relations :
\begin{align*}
&[a_{m},a_{n}]=m(1-p^{|m|})\frac{1-q^{|m|}}{1-t^{|m|}}\delta_{m+n,0}, \quad [b_{m},b_{n}]=m\frac{1-p^{|m|}}{(qt^{-1}p)^{|m|}}\frac{1-q^{|m|}}{1-t^{|m|}}\delta_{m+n,0}, \\
&[a_{m},b_{n}]=0.
\end{align*}
Let $|0 \rangle$ be the vacuum vector which satisfies the condition $a_{n}|0 \rangle=b_{n}|0 \rangle=0$ $(n>0)$ and set the boson Fock space $\mathcal{F}$ as a left $\mathcal{B}_{a,b}$ module.
\begin{align*}
\mathcal{F}=\text{\rm span}\{a_{-\lambda}b_{-\mu}|0 \rangle : \lambda, \mu\in{\mathcal{P}}\}.
\end{align*}
We also define the normal ordering $\bm{:} \bullet \bm{:}$ as usual :
\begin{align*}
\bm{:}a_{m}a_{n}\bm{:}=
\begin{cases}
a_{m}a_{n} \quad (m<n), \\
a_{n}a_{m} \quad (m\geq{n}),
\end{cases}
\bm{:}b_{m}b_{n}\bm{:}=
\begin{cases}
b_{m}b_{n} \quad (m<n), \\
b_{n}b_{m} \quad (m\geq{n}).
\end{cases}
\end{align*}

\begin{theorem}[\textbf{Free field realization of the elliptic Macdonald operator [14]}]
Define the operators $\eta(p;z)$, $\xi(p;z)$ as follows $(\gamma:=(qt^{-1})^{-1/2})$.
\begin{align}
&\eta(p;z):=\bm{:}\exp\bigg(-\sum_{n\neq{0}}\frac{1-t^{-n}}{1-p^{|n|}}p^{|n|}b_{n}\frac{z^{n}}{n}\bigg)\exp\bigg(-\sum_{n\neq{0}}\frac{1-t^{n}}{1-p^{|n|}}a_{n}\frac{z^{-n}}{n}\bigg)\bm{:}, \\
&\xi(p;z)
:=\bm{:}\exp\bigg(\sum_{n\neq{0}}\frac{1-t^{-n}}{1-p^{|n|}}\gamma^{-|n|}p^{|n|}b_{n}\frac{z^{n}}{n}\bigg)\exp\bigg(\sum_{n\neq{0}}\frac{1-t^{n}}{1-p^{|n|}}\gamma^{|n|}a_{n}\frac{z^{-n}}{n}\bigg)\bm{:}.
\end{align}
Let $\phi(p;z) : \mathcal{F} \to \mathcal{F}\otimes\mathbf{C}[[z,z^{-1}]]$ be an operator defined as follows.
\begin{align}
\phi(p;z):=\exp\bigg(\sum_{n>0}\frac{(1-t^{n})(qt^{-1}p)^{n}}{(1-q^{n})(1-p^{n})}b_{-n}\frac{z^{-n}}{n}\bigg)\exp\bigg(\sum_{n>0}\frac{1-t^{n}}{(1-q^{n})(1-p^{n})}a_{-n}\frac{z^{n}}{n}\bigg).
\end{align}
We use the symbol $\phi_{N}(p;x):=\prod_{j=1}^{N}\phi(p;x_{j})$.

(1) The elliptic Macdonald operator $H_{N}(q,t,p)$ is reproduced by the operator $\eta(p;z)$ as follows.
\begin{align}
[\eta(p;z){-}t^{-N}(\eta(p;z))_{-}(\eta(p;p^{-1}z))_{+}]_{1}\phi_{N}(p;x)|0 \rangle 
=\frac{t^{-N+1}\Theta_{p}(t^{-1})}{(p;p)_{\infty}^{3}}H_{N}(q,t,p)\phi_{N}(p;x)|0 \rangle.
\end{align}
Here we use the notation $(\eta(p;z))_{\pm}$ as
\begin{align*}
(\eta(p;z))_{\pm}:=\exp\bigg(-\sum_{\pm n>0}\frac{1-t^{-n}}{1-p^{|n|}}p^{|n|}b_{n}\frac{z^{n}}{n}\bigg)\exp\bigg(-\sum_{\pm n>0}\frac{1-t^{n}}{1-p^{|n|}}a_{n}\frac{z^{-n}}{n}\bigg).
\end{align*}

(2) The elliptic Macdonald operator $H_{N}(q^{-1},t^{-1},p)$ is reproduced by the operator $\xi(p;z)$ as follows.
\begin{align}
[\xi(p;z){-}t^{N}(\xi(p;z))_{-}(\xi(p;p^{-1}z))_{+}]_{1}\phi_{N}(p;x)|0 \rangle 
=\frac{t^{N-1}\Theta_{p}(t)}{(p;p)_{\infty}^{3}}H_{N}(q^{-1},t^{-1},p)\phi_{N}(p;x)|0 \rangle.
\end{align}
Here we use the notation $(\xi(p;z))_{\pm}$ as
\begin{align*}
(\xi(p;z))_{\pm}:=\exp\bigg(\sum_{\pm n>0}\frac{1-t^{-n}}{1-p^{|n|}}\gamma^{-|n|}p^{|n|}b_{n}\frac{z^{n}}{n}\bigg)\exp\bigg(\sum_{\pm n>0}\frac{1-t^{n}}{1-p^{|n|}}\gamma^{|n|}a_{n}\frac{z^{-n}}{n}\bigg).
\end{align*}
\end{theorem}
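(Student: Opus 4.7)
The strategy is to mirror the trigonometric proof of Proposition 2.1, promoting each rational-function identity to its theta-function counterpart, while the subtracted operator $t^{-N}(\eta(p;z))_{-}(\eta(p;p^{-1}z))_{+}$ is engineered to absorb an elliptic remainder that has no trigonometric analogue. First I would apply Wick's theorem to compute the contraction of $\eta(p;z)$ with a single $\phi(p;w)$. Because $\phi(p;z)$ factors into an $a$-exponential and a $b$-exponential with $[a_m,b_n]=0$, the contraction splits into two geometric-series summations which, after rewriting via the double infinite product $(x;q,p)_{\infty}$ and the theta function $\Theta_p$, should collapse to the expected ratio
\[
\eta(p;z)\phi(p;w)=\frac{\Theta_p(w/z)}{\Theta_p(tw/z)}\bm{:}\eta(p;z)\phi(p;w)\bm{:},
\]
the direct elliptic lift of $(1-w/z)/(1-tw/z)$. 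Iterating gives $\eta(p;z)\phi_N(p;x)=\prod_{j=1}^{N}\frac{\Theta_p(x_j/z)}{\Theta_p(tx_j/z)}\bm{:}\eta(p;z)\phi_N(p;x)\bm{:}$.

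Next, I would establish an elliptic partial-fraction expansion of $\prod_{j=1}^{N}\Theta_p(x_j/z)/\Theta_p(tx_j/z)$ as a sum of delta-function contributions at $z=tx_i$ plus a remainder. The residues at these simple poles use $\Theta_p(1)=0$ together with $\Theta_p'(1)=-(p;p)_{\infty}^{3}$, which is what produces the scalar prefactor $\Theta_p(t^{-1})/(p;p)_{\infty}^{3}$ in the statement. Combined with the shift relation $(\eta(p;tz))_{-}\phi(p;z)|0\rangle=\phi(p;qz)|0\rangle=T_{q,z}\phi(p;z)|0\rangle$, which one verifies directly by matching the exponents of the $a_{-n}$ and $b_{-n}$ in the explicit exponentials, the delta-function part of the expansion should reproduce $\frac{t^{-N+1}\Theta_p(t^{-1})}{(p;p)_{\infty}^{3}}H_N(q,t,p)\phi_N(p;x)|0\rangle$, i.e.\ the right-hand side of (3.5).

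The remaining step is to show that the non-delta remainder of the partial-fraction expansion equals $[t^{-N}(\eta(p;z))_{-}(\eta(p;p^{-1}z))_{+}]_1\phi_N(p;x)|0\rangle$. I would compute the latter by Wick-contracting $(\eta(p;p^{-1}z))_{+}$, which consists only of annihilators, through each $\phi(p;x_j)$; this produces a scalar theta-function factor (now evaluated at the $p^{-1}$-shifted argument) multiplying $(\eta(p;z))_{-}\phi_N(p;x)|0\rangle$. Using the reflection identities $\Theta_p(px)=-x^{-1}\Theta_p(x)$ and $\Theta_p(x)=-x\Theta_p(x^{-1})$ to simplify, the result should match the remainder of the elliptic partial-fraction expansion term by term, completing the proof of (3.5).

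The main obstacle is the elliptic partial-fraction identity and its precise matching with the subtracted operator. In the trigonometric case the analogous leftover $\prod_{j}(1-z/x_j)/(1-t^{-1}z/x_j)$ has constant term equal to $1$, but in the elliptic setting the quasi-periodicity of $\Theta_p$ together with the $p$-shifted series hidden in $\eta(p;\cdot)$ generates several additional terms, and verifying that they reassemble exactly into the action of $t^{-N}(\eta(p;z))_{-}(\eta(p;p^{-1}z))_{+}$ requires careful theta-function bookkeeping. Part (2) then follows by the same template applied to $\xi(p;z)$: the contraction $\xi(p;z)\phi(p;w)$ yields the theta-function ratio associated to $H_N(q^{-1},t^{-1},p)$, the residue computation produces the prefactor $\Theta_p(t)/(p;p)_{\infty}^{3}$, and $t^{N}(\xi(p;z))_{-}(\xi(p;p^{-1}z))_{+}$ plays the role of the analogous remainder.
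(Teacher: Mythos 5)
Your proposal follows essentially the same route as the paper's proof: Wick contraction giving the ratio $\Theta_p(x_j/z)/\Theta_p(tx_j/z)$, the elliptic partial-fraction expansion whose delta-function part (your residue computation via $\Theta_p'(1)=-(p;p)_\infty^3$ is just the paper's identity $\frac{1}{\Theta_p(x)}+\frac{x^{-1}}{\Theta_p(x^{-1})}=\frac{1}{(p;p)_\infty^3}\delta(x)$ in residue form) yields the Macdonald operator through $(\eta(p;tz))_-\phi(p;z)=\phi(p;qz)$, and identification of the non-delta remainder $t^{-N}\prod_j\Theta_p(px_j/z)/\Theta_p(ptx_j/z)$ (obtained via $\Theta_p(ax)/\Theta_p(bx)=\Theta_p(pa^{-1}x^{-1})/\Theta_p(pb^{-1}x^{-1})$) with the contraction of $t^{-N}(\eta(p;z))_-(\eta(p;p^{-1}z))_+$ against $\phi_N(p;x)|0\rangle$. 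The plan is correct and matches the paper's argument step for step, including the treatment of part (2) by the same template.
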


\begin{proof}[\textit{Proof}]
Here we prove the formula (3.5). First we have
\begin{align*}
\eta(p;z)\phi(p;w)=\frac{\Theta_{p}(w/z)}{\Theta_{p}(tw/z)}\bm{:}\eta(p;z)\phi(p;w)\bm{:}
\end{align*}
and also have
\begin{align*}
\eta(p;z)\phi_{N}(p;x)=\prod_{j=1}^{N}\frac{\Theta_{p}(x_{j}/z)}{\Theta_{p}(tx_{j}/z)}\bm{:}\eta(p;z)\phi_{N}(p;x)\bm{:}.
\end{align*}
From the partial fraction expansion of formula a product of the theta function
\begin{align*}
\prod_{j=1}^{N}\frac{\Theta_{p}(x_{j}/z)}{\Theta_{p}(tx_{j}/z)}=\frac{\Theta_{p}(t)}{\Theta_{p}(t^{N})}\sum_{i=1}^{N}\frac{\Theta_{p}(t^{-N+1}x_{i}z)}{\Theta_{p}(tx_{i}/z)}\prod_{j\neq{i}}\frac{\Theta_{p}(tx_{i}/x_{j})}{\Theta_{p}(x_{i}/x_{j})}
\end{align*}
and the relation of the theta function and the delta function as
\begin{align*}
\frac{1}{\Theta_{p}(x)}+\frac{x^{-1}}{\Theta_{p}(x^{-1})}=\frac{1}{(p;p)_{\infty}^{3}}\delta(x),
\end{align*}
we have the following :
\begin{align*}
\prod_{j=1}^{N}\frac{\Theta_{p}(x_{j}/z)}{\Theta_{p}(tx_{j}/z)}=\frac{t^{-N+1}\Theta_{p}(t^{-1})}{(p;p)_{\infty}^{3}}\sum_{i=1}^{N}\prod_{j\neq{i}}\frac{\Theta_{p}(tx_{i}/x_{j})}{\Theta_{p}(x_{i}/x_{j})}\delta\Big( t\frac{x_{i}}{z}\Big)+t^{-N}\prod_{j=1}^{N}\frac{\Theta_{p}(px_{j}/z)}{\Theta_{p}(ptx_{j}/z)}.
\end{align*}
Here we use the relation $\Theta_{p}(ax)/\Theta_{p}(bx)=\Theta_{p}(pa^{-1}x^{-1})/\Theta_{p}(pb^{-1}x^{-1})$. Hence we have
\begin{align*}
&\quad [\eta(p;z)-t^{-N}(\eta(p;z))_{-}(\eta(p;p^{-1}z))_{+}]_{1}\phi_{N}(p;x)|0 \rangle \\
&=\frac{t^{-N+1}\Theta_{p}(t^{-1})}{(p;p)_{\infty}^{3}}\sum_{i=1}^{N}\prod_{j\neq{i}}\frac{\Theta_{p}(tx_{i}/x_{j})}{\Theta_{p}(x_{i}/x_{j})}(\eta(p;tx_{i}))_{-}\phi_{N}(p;x)|0 \rangle.
\end{align*}
By the relation $(\eta(p;tz))_{-}\phi(p;z)=\phi(p;qz)$, we have
\begin{align*}
&\quad [\eta(p;z)-t^{-N}(\eta(p;z))_{-}(\eta(p;p^{-1}z))_{+}]_{1}\phi_{N}(p;x)|0 \rangle \\
&=\frac{t^{-N+1}\Theta_{p}(t^{-1})}{(p;p)_{\infty}^{3}}H_{N}(q,t,p)\phi_{N}(p;x)|0 \rangle. \quad \Box
\end{align*}
\end{proof}

The theorem 3.1 is also stated as follows. First we set zero mode generators $a_{0}$, $Q$ which satisfy the relation :
\begin{align}
[a_{0},Q]=1, \quad [a_{n},a_{0}]=[b_{n},a_{0}]=0, \quad [a_{n},Q]=[b_{n},Q]=0 \quad (n\in\mathbf{Z}\setminus\{0\}).
\end{align}
We also set the condition $a_{0}|0 \rangle=0$. We define $|\alpha \rangle:=e^{\alpha Q}|0 \rangle$ $(\alpha\in\mathbf{C})$. Then we have $a_{0}|\alpha \rangle=\alpha|\alpha \rangle$. We also set  $\mathcal{F}_{\alpha}:=\text{span}\{a_{-\lambda}b_{-\mu}|\alpha \rangle : \lambda,\mu\in\mathcal{P}\}$ $(\alpha\in\mathbf{C})$.

\begin{theorem}[\textbf{[14]}]
Set the operators $\widetilde{\eta}(p;z)$, $\widetilde{\xi}(p;z)$ by
\begin{align*}
\widetilde{\eta}(p;z):=(\eta(p;z))_{-}(\eta(p;p^{-1}z))_{+}, \quad \widetilde{\xi}(p;z):=(\xi(p;z))_{-}(\xi(p;p^{-1}z))_{+}. 
\end{align*}
Using these symbols we define operators $E(p;z)$, $F(p;z)$ as follows :
\begin{align}
E(p;z):=\eta(p;z)-\widetilde{\eta}(p;z)t^{-a_{0}}, \quad F(p;z):=\xi(p;z)-\widetilde{\xi}(p;z)t^{a_{0}}.
\end{align}
Then the elliptic Macdonald operators $H_{N}(q,t,p)$, $H_{N}(q^{-1},t^{-1},p)$ are reproduced by the operators $E(p;z)$, $F(p;z)$ as follows.
\begin{align}
&[E(p;z)]_{1}\phi_{N}(p;x)|N\rangle=\frac{t^{-N+1}\Theta_{p}(t^{-1})}{(p;p)_{\infty}^{3}}H_{N}(q,t,p)\phi_{N}(p;x)|N\rangle, \\
&[F(p;z)]_{1}\phi_{N}(p;x)|N\rangle=\frac{t^{N-1}\Theta_{p}(t)}{(p;p)_{\infty}^{3}}H_{N}(q^{-1},t^{-1},p)\phi_{N}(p;x)|N\rangle.
\end{align}
\end{theorem}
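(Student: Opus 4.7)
The plan is to view Theorem 3.2 as a cosmetic rewriting of Theorem 3.1: the only new ingredient is the zero mode $a_0$, which is introduced precisely so that the scalar factors $t^{-N}$ and $t^{N}$ appearing in Theorem 3.1 can be absorbed into the operator expression via $t^{-a_0}$ or $t^{a_0}$. The key observation is that the charged vacuum $|N\rangle = e^{NQ}|0\rangle$ is an eigenvector of $a_0$. From the zero mode relations (3.7), $[a_0,Q]=1$ together with $a_0|0\rangle=0$ gives $a_0|N\rangle = N|N\rangle$, hence $t^{-a_0}|N\rangle = t^{-N}|N\rangle$.

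First I would observe that the creation operators $a_{-n}, b_{-n}$ appearing in $\phi(p;z)$ and in the ``minus'' parts of $\eta(p;z)$ and $\widetilde{\eta}(p;z)$ all commute with both $a_0$ and $Q$ by (3.7). Consequently $t^{-a_0}$ can be pulled past $\phi_{N}(p;x)$ and past $\widetilde{\eta}(p;z)$ at the cost of nothing, and one obtains
\begin{align*}
\widetilde{\eta}(p;z)\,t^{-a_{0}}\,\phi_{N}(p;x)|N\rangle
= \widetilde{\eta}(p;z)\,\phi_{N}(p;x)\,t^{-a_{0}}|N\rangle
= t^{-N}\,\widetilde{\eta}(p;z)\,\phi_{N}(p;x)|N\rangle.
\end{align*}
Taking constant terms in $z$ and subtracting from $[\eta(p;z)]_{1}\phi_{N}(p;x)|N\rangle$ yields
\begin{align*}
[E(p;z)]_{1}\phi_{N}(p;x)|N\rangle
= \bigl[\eta(p;z) - t^{-N}\widetilde{\eta}(p;z)\bigr]_{1}\phi_{N}(p;x)|N\rangle,
\end{align*}
which is exactly the left-hand side of (3.5) in Theorem 3.1, applied to $\phi_{N}(p;x)|N\rangle$. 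Since $\phi_{N}(p;x)$ acts identically on $|0\rangle$ and $|N\rangle$ (it only involves negative-mode $a,b$ oscillators, which commute with $Q$), the computation carried out in the proof of Theorem 3.1 carries over verbatim with $|0\rangle$ replaced by $|N\rangle$, giving (3.9).

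The proof of (3.10) is entirely parallel: $t^{a_{0}}$ commutes with $\phi_{N}(p;x)$ and with $\widetilde{\xi}(p;z)$, and $t^{a_{0}}|N\rangle = t^{N}|N\rangle$, so
\begin{align*}
[F(p;z)]_{1}\phi_{N}(p;x)|N\rangle
= \bigl[\xi(p;z) - t^{N}\widetilde{\xi}(p;z)\bigr]_{1}\phi_{N}(p;x)|N\rangle,
\end{align*}
which matches the left-hand side of (3.6). Invoking part (2) of Theorem 3.1 finishes the argument.

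There is essentially no obstacle here beyond bookkeeping; the only subtlety worth checking is that every operator intervening between $t^{-a_{0}}$ (or $t^{a_{0}}$) and the vector $|N\rangle$ is built from modes that commute with $a_{0}$, so the eigenvalue $t^{\mp N}$ can be freely extracted. Once that commutation is confirmed from (3.7), the theorem is a direct corollary of Theorem 3.1.
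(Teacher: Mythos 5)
Your proposal is correct and matches the paper's intended argument: the paper states Theorem 3.2 as an immediate restatement of Theorem 3.1 after introducing the zero modes (3.7), and the implicit proof is precisely your bookkeeping that $a_{0}|N\rangle=N|N\rangle$ while all oscillators $a_{n},b_{n}$ $(n\neq 0)$ and $\phi_{N}(p;x)$ commute with $a_{0}$ and $Q$, so $t^{\mp a_{0}}$ contributes exactly the scalars $t^{\mp N}$ of (3.5) and (3.6). Your remark that positive modes still annihilate $|N\rangle$ (since they commute with $Q$) is the one point needed for the Theorem 3.1 computation to carry over verbatim, and you have it.
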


The dual versions of the theorem 3.1, 3.2 are also available. Let $\langle 0|$ be the dual vacuum vector which satisfies the condition $\langle 0|a_{n}=\langle 0|b_{n}=0$ $(n<0)$ and $\langle 0|a_{0}=0$. We define the dual boson Fock space as a right $\mathcal{B}_{a,b}$ module :
\begin{align*}
\mathcal{F}^{\ast}:=\text{span}\{\langle 0|a_{\lambda}b_{\mu} : \lambda,\mu\in\mathcal{P}\}.
\end{align*}
For a complex number $\alpha\in\mathbf{C}$, set $\langle \alpha|:=\langle 0|e^{-\alpha Q}$. Then we have $\langle \alpha|a_{0}=\alpha\langle \alpha|$. We also set $\mathcal{F}^{\ast}_{\alpha}:=\text{span}\{\langle \alpha|a_{\lambda}b_{\mu} : \lambda, \mu\in\mathcal{P}\}$ $(\alpha\in\mathbf{C})$.

\begin{theorem}[\textbf{Dual versions of the theorem 3.1, 3.2}]
Let us define an operator $\phi^{\ast}(p;z) : \mathcal{F}^{\ast} \to \mathcal{F}^{\ast}\otimes\mathbf{C}[[z,z^{-1}]]$ as follows.
\begin{align}
\phi^{\ast}(p;z):=\exp\bigg(\sum_{n>0}\frac{(1-t^{n})(qt^{-1}p)^{n}}{(1-q^{n})(1-p^{n})}b_{n}\frac{z^{-n}}{n}\bigg)\exp\bigg(\sum_{n>0}\frac{1-t^{n}}{(1-q^{n})(1-p^{n})}a_{n}\frac{z^{n}}{n}\bigg).
\end{align}
We use the symbol $\phi^{\ast}_{N}(p;x):=\prod_{j=1}^{N}\phi^{\ast}(p;x_{j})$.

(1) The operators $\eta(p;z)$, $\xi(p;z)$ reproduce the elliptic Macdonald operators $H_{N}(q,t,p)$, $H_{N}(q^{-1},t^{-1},p)$ as follows.
\begin{align}
&\langle 0|\phi^{\ast}_{N}(p;x)[\eta(p;z){-}t^{-N}(\eta(p;z))_{-}(\eta(p;p^{-1}z))_{+}]_{1}
=\frac{t^{-N+1}\Theta_{p}(t^{-1})}{(p;p)_{\infty}^{3}}H_{N}(q,t,p)\langle 0|\phi^{\ast}_{N}(p;x), \\
&\langle 0|\phi^{\ast}_{N}(p;x)[\xi(p;z){-}t^{N}(\xi(p;z))_{-}(\xi(p;p^{-1}z))_{+}]_{1} 
=\frac{t^{N-1}\Theta_{p}(t)}{(p;p)_{\infty}^{3}}H_{N}(q^{-1},t^{-1},p)\langle 0|\phi^{\ast}_{N}(p;x).
\end{align} 

(2) The operators $E(p;z)$, $F(p;z)$ reproduce the elliptic Macdonald operators $H_{N}(q,t,p)$, $H_{N}(q^{-1},t^{-1},p)$ as follows.
\begin{align}
&\langle N|\phi^{\ast}_{N}(p;x)[E(p;z)]_{1}=\frac{t^{-N+1}\Theta_{p}(t^{-1})}{(p;p)_{\infty}^{3}}H_{N}(q,t,p)\langle N|\phi^{\ast}_{N}(p;x), \\
&\langle N|\phi^{\ast}_{N}(p;x)[F(p;z)]_{1}=\frac{t^{N-1}\Theta_{p}(t)}{(p;p)_{\infty}^{3}}H_{N}(q^{-1},t^{-1},p)\langle N|\phi^{\ast}_{N}(p;x).
\end{align}
\end{theorem}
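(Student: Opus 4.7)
My plan is to mirror the proofs of Theorems 3.1 and 3.2 by computing Wick contractions from the right, inserting $\langle 0|\phi^{\ast}_{N}(p;x)$ on the far left and $\eta(p;z)$ or $\xi(p;z)$ on the far right. The key observation is that the two-point contraction
\begin{align*}
\phi^{\ast}(p;w)\eta(p;z)=\frac{\Theta_{p}(w/z)}{\Theta_{p}(tw/z)}\bm{:}\phi^{\ast}(p;w)\eta(p;z)\bm{:}
\end{align*}
carries the \emph{same} theta-function prefactor as $\eta(p;z)\phi(p;w)$: the positive $a$-modes of $\phi^{\ast}$ pair with the negative $a$-modes of $\eta$ through the same coefficient combination as in the original contraction, and likewise for the $b$-modes, so the two boson blocks exponentiate to an identical theta quotient. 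A parallel identity holds for $\phi^{\ast}(p;w)\xi(p;z)$.

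Taking the $N$-fold contraction and using $\langle 0|(\eta(p;z))_{-}=\langle 0|$ (since $a_{n},b_{n}$ with $n<0$ annihilate $\langle 0|$), I obtain
\begin{align*}
\langle 0|\phi^{\ast}_{N}(p;x)\eta(p;z)=\prod_{j=1}^{N}\frac{\Theta_{p}(x_{j}/z)}{\Theta_{p}(tx_{j}/z)}\langle 0|\phi^{\ast}_{N}(p;x)(\eta(p;z))_{+}.
\end{align*}
Then I apply \emph{the same} partial-fraction expansion of $\prod_{j}\Theta_{p}(x_{j}/z)/\Theta_{p}(tx_{j}/z)$ and the $\Theta_{p}$--$\delta$ identity used in the proof of Theorem 3.1. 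The delta-function residues at $z=tx_{i}$ produce $\langle 0|\phi^{\ast}_{N}(p;x)(\eta(p;tx_{i}))_{+}$, and the mirror relation
\begin{align*}
\phi^{\ast}(p;z)(\eta(p;tz))_{+}=\phi^{\ast}(p;qz),
\end{align*}
which is the right-action dual of $(\eta(p;tz))_{-}\phi(p;z)=\phi(p;qz)$ and follows from a one-line Wick calculation, converts this into the $q$-shift $T_{q,x_{i}}$ on the $i$-th variable of $\phi^{\ast}_{N}(p;x)$. This recovers $H_{N}(q,t,p)$ acting on $\langle 0|\phi^{\ast}_{N}(p;x)$ up to the non-delta remainder of the partial-fraction formula, which is absorbed by the subtraction $-t^{-N}\langle 0|\phi^{\ast}_{N}(p;x)(\eta(p;z))_{-}(\eta(p;p^{-1}z))_{+}$. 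The $\xi$-case is line-by-line analogous once the $\gamma^{|n|}$ weights are tracked, yielding $H_{N}(q^{-1},t^{-1},p)$.

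Part (2) is then immediate from part (1). Since $\phi^{\ast}_{N}(p;x)$ commutes with $t^{\pm a_{0}}$ (by $[a_{n},a_{0}]=[b_{n},a_{0}]=0$) and $\langle N|a_{0}=N\langle N|$, one has $\langle N|\phi^{\ast}_{N}(p;x)\widetilde{\eta}(p;z)t^{-a_{0}}=t^{-N}\langle N|\phi^{\ast}_{N}(p;x)\widetilde{\eta}(p;z)$, and similarly for $\widetilde{\xi}$ with $t^{a_{0}}$; the definitions $E(p;z)=\eta(p;z)-\widetilde{\eta}(p;z)t^{-a_{0}}$ and $F(p;z)=\xi(p;z)-\widetilde{\xi}(p;z)t^{a_{0}}$ translate (3.13)--(3.14) into (3.15)--(3.16). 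The main obstacle is purely bookkeeping: verifying that $\langle 0|\phi^{\ast}_{N}(p;x)(\eta(p;z))_{-}$ produces the exact theta-product factor $\prod_{j}\Theta_{p}(px_{j}/z)/\Theta_{p}(ptx_{j}/z)$ needed to match the correction term, which requires tracking the $b$-mode contractions with their non-trivial factor $(qt^{-1}p)^{-|m|}$ in the commutator, but this is a direct calculation and introduces no new structural input beyond the techniques already deployed in Theorems 3.1 and 3.2.
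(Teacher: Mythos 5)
Your overall architecture is the right one — dualize the proof of Theorem 3.1 (contract from the left, partial-fraction the theta prefactor, absorb the remainder with the subtracted term), and your part (2) is correct as stated: since $\phi^{\ast}_{N}(p;x)$, $\widetilde{\eta}$, $\widetilde{\xi}$ commute with $a_{0}$ and $\langle N|a_{0}=N\langle N|$, the factors $t^{\mp a_{0}}$ evaluate to $t^{\mp N}$ and (3.13)--(3.14) become (3.15)--(3.16). But the computational core of your part (1) is wrong. The two-point function does \emph{not} carry the same prefactor as $\eta(p;z)\phi(p;w)$: the positive $a,b$-modes of $\phi^{\ast}(p;w)$ contract with the negative modes of $\eta(p;z)$, and a direct computation with $[a_{m},a_{-m}]=m(1-p^{m})\frac{1-q^{m}}{1-t^{m}}$ and $[b_{m},b_{-m}]=m\frac{1-p^{m}}{(qt^{-1}p)^{m}}\frac{1-q^{m}}{1-t^{m}}$ gives
\begin{align*}
\phi^{\ast}(p;w)\,\eta(p;z)=\frac{\Theta_{p}(t^{-1}wz)}{\Theta_{p}(wz)}\,\bm{:}\phi^{\ast}(p;w)\eta(p;z)\bm{:},
\end{align*}
a function of the \emph{product} $wz$, not of the ratio $w/z$. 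The paper itself records this: in (3.17) the dual-side factor of $C_{MN}$ is $\prod_{i}\Theta_{p}(t^{-1}x_{i}z)/\Theta_{p}(x_{i}z)$, and in Remark 3.4 the kernel depends on the products $x_{i}y_{j}$. Consequently the partial-fraction identity (applied with $y_{j}\to t^{-1}x_{j}$, $z\to z^{-1}$) produces delta functions $\delta(x_{i}z)$, localizing at $z=x_{i}^{-1}$, not at $z=tx_{i}$ as you claim; the Macdonald coefficients $\prod_{j\neq i}\Theta_{p}(tx_{i}/x_{j})/\Theta_{p}(x_{i}/x_{j})$ and the constant $t^{-N+1}\Theta_{p}(t^{-1})/(p;p)_{\infty}^{3}$ do come out correctly from this substitution.

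Your "mirror relation" $\phi^{\ast}(p;z)(\eta(p;tz))_{+}=\phi^{\ast}(p;qz)$ is also false, and not just at the wrong point: it cannot hold as an operator identity at all, because $(\eta(p;w))_{+}$ and $\phi^{\ast}(p;z)$ are both built exclusively from positive modes, which commute with each other — there is no Wick contraction between them, one-line or otherwise. The $q$-shift emerges only against the left vacuum, by merging the exponents of the commuting positive modes: checking the $a_{n}$ coefficients forces $w=x_{i}^{-1}$, and the $b_{n}$ coefficients then match precisely because of the $(qt^{-1}p)^{n}$ weight in $\phi^{\ast}$, yielding
\begin{align*}
\langle 0|\phi^{\ast}_{N}(p;x)\,(\eta(p;x_{i}^{-1}))_{+}=T_{q,x_{i}}\langle 0|\phi^{\ast}_{N}(p;x).
\end{align*}
Likewise your final bookkeeping claim — that $\langle 0|\phi^{\ast}_{N}(p;x)(\eta(p;z))_{-}$ produces $\prod_{j}\Theta_{p}(px_{j}/z)/\Theta_{p}(ptx_{j}/z)$ — inherits the wrong variable dependence: the factor is again $\prod_{j}\Theta_{p}(t^{-1}x_{j}z)/\Theta_{p}(x_{j}z)$, and matching the remainder of the partial-fraction expansion against the subtracted term $t^{-N}[(\eta(p;z))_{-}(\eta(p;p^{-1}z))_{+}]_{1}$ uses the invariance $[f(z)]_{1}=[f(pz)]_{1}$, not a new contraction. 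With these three corrections (prefactor in $wz$, localization at $z=x_{i}^{-1}$, shift identity as displayed) your proof closes exactly along the lines the paper intends, including the $\xi$/$F$ case; as written, however, the delta functions sit at the wrong points and the shift mechanism you invoke does not exist.
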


\begin{remark}
The operator $\phi^{\ast}_{M}(p;x)$, $\phi_{N}(p;y)$ and the kernel function $\Pi(q,t,p)(x,y)$ satisfy the following.
\begin{align*}
\langle 0|\phi^{\ast}_{M}(p;x)\phi_{N}(p;y)|0 \rangle=\Pi(q,t,p)(\{x_{i}\}_{i=1}^{M},\{y_{j}\}_{j=1}^{N}):=\prod_{\genfrac{}{}{0pt}{1}{1\leq{i}\leq{M}}{1\leq{j}\leq{N}}}\frac{\Gamma_{q,p}(x_{i}y_{j})}{\Gamma_{q,p}(tx_{i}y_{j})}.
\end{align*} 
\end{remark}

By using the free field realization of the elliptic Macdonald operator, we obtain the functional equation of the elliptic kernel function $\Pi(q,t,p)(\{x_{i}\}_{i=1}^{M},\{y_{j}\}_{j=1}^{N})$ as follows.

\begin{theorem}[\textbf{Functional equation of the elliptic kernel function}]
Define the elliptic kernel function $\Pi(q,t,p)(\{x_{i}\}_{i=1}^{M},\{y_{j}\}_{j=1}^{N})$ by
\begin{align}
\Pi(q,t,p)(\{x_{i}\}_{i=1}^{M},\{y_{j}\}_{j=1}^{N}):=\prod_{\genfrac{}{}{0pt}{1}{1\leq{i}\leq{M}}{1\leq{j}\leq{N}}}\frac{\Gamma_{q,p}(x_{i}y_{j})}{\Gamma_{q,p}(tx_{i}y_{j})}.
\end{align}
We also define $C_{MN}(p;x,y)$ as 
\begin{align}
C_{MN}(p;x,y)&:=\frac{\langle 0|\phi^{\ast}_{M}(p;x)[(\eta(p;z))_{-}(\eta(p;p^{-1}z))_{+}]_{1}\phi_{N}(p;y)|0 \rangle}{\Pi(q,t,p)(\{x_{i}\}_{i=1}^{M},\{y_{j}\}_{j=1}^{N})} \notag\\
&=\left[\prod_{i=1}^{M}\frac{\Theta_{p}(t^{-1}x_{i}z)}{\Theta_{p}(x_{i}z)}\prod_{j=1}^{N}\frac{\Theta_{p}(z/y_{j})}{\Theta_{p}(t^{-1}z/y_{j})}\right]_{1}.
\end{align}
For the elliptic Macdonald operator and the function $\Pi(q,t,p)(\{x_{i}\}_{i=1}^{M},\{y_{j}\}_{j=1}^{N})$, we have the following functional equation.
\begin{align}
&\quad \{H_{M}(q,t,p)_{x}-t^{M-N}H_{N}(q,t,p)_{y}\}\Pi(q,t,p)(\{x_{i}\}_{i=1}^{M},\{y_{j}\}_{j=1}^{N}) \notag\\
&=\frac{(1-t^{M-N})(p;p)_{\infty}^{3}}{\Theta_{p}(t)}C_{MN}(p;x,y)\Pi(q,t,p)(\{x_{i}\}_{i=1}^{M},\{y_{j}\}_{j=1}^{N}).
\end{align}
Here the symbol $H_{M}(q,t,p)_{x}$ denotes the elliptic Macdonald operator which acts on functions of $x_{1},\cdots, x_{M}$.
\end{theorem}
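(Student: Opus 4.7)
The strategy mirrors the trigonometric argument in Proposition 2.4: I would compute the matrix element
\begin{align*}
X:=\langle 0|\phi^{\ast}_{M}(p;x)[\eta(p;z)]_{1}\phi_{N}(p;y)|0\rangle
\end{align*}
in two different ways, once by acting with $[\eta(p;z)]_1$ on the ket and once by acting on the bra. The crucial difference from the trigonometric case is that the elliptic free field realization (Theorem 3.1 and its dual, Theorem 3.3) does \emph{not} reproduce $H_N$ from $[\eta(p;z)]_1$ alone; there is a residual operator $t^{-N}(\eta(p;z))_{-}(\eta(p;p^{-1}z))_{+}$. This residual is exactly what will materialize into the $C_{MN}$ term on the right hand side.

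\textbf{First step.} Using the dual free field realization (3.13) on the bra and the matrix-element formula from Remark 3.4,
\begin{align*}
X=\frac{t^{-M+1}\Theta_{p}(t^{-1})}{(p;p)_{\infty}^{3}}H_{M}(q,t,p)_{x}\,\Pi(q,t,p)(x,y)+t^{-M}\langle 0|\phi^{\ast}_{M}(p;x)[(\eta(p;z))_{-}(\eta(p;p^{-1}z))_{+}]_{1}\phi_{N}(p;y)|0\rangle.
\end{align*}
Applying instead (3.5) on the ket yields the analogous expression with $M$ replaced by $N$ and $H_{M,x}$ replaced by $H_{N,y}$. Equating the two evaluations, and recognizing by the defining equation (3.18) of $C_{MN}$ that the residual matrix element is $C_{MN}(p;x,y)\,\Pi$, I obtain
\begin{align*}
\frac{t^{-M+1}\Theta_{p}(t^{-1})}{(p;p)_{\infty}^{3}}H_{M,x}\Pi-\frac{t^{-N+1}\Theta_{p}(t^{-1})}{(p;p)_{\infty}^{3}}H_{N,y}\Pi=(t^{-N}-t^{-M})C_{MN}(p;x,y)\Pi.
\end{align*}

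\textbf{Second step.} Multiplying through by $(p;p)_{\infty}^{3}/(t^{-M+1}\Theta_{p}(t^{-1}))$ converts the left side to $H_{M,x}\Pi-t^{M-N}H_{N,y}\Pi$. For the coefficient on the right I use $\Theta_{p}(t^{-1})=-t^{-1}\Theta_{p}(t)$, which follows from the identity $\Theta_{p}(x)=-x\Theta_{p}(x^{-1})$ listed in the Notations. A direct simplification then produces the factor $(1-t^{M-N})/\Theta_{p}(t)$, yielding (3.19).

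\textbf{Main obstacle.} The computation itself is a bookkeeping exercise; the only subtle points are (i) recognizing that the two different ``correction'' terms $t^{-M}$ and $t^{-N}$ appearing when one acts on the bra versus the ket combine (rather than cancel) to produce the $C_{MN}$ correction, and (ii) carrying out the theta-function manipulation $\Theta_{p}(t^{-1})\leftrightarrow\Theta_{p}(t)$ with the correct sign so that the final form matches (3.19). Once these are handled, the identification of the residual matrix element with $C_{MN}\Pi$ is immediate from (3.18).
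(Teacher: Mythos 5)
Your proposal is correct and follows essentially the same route as the paper's own proof: computing $\langle 0|\phi^{\ast}_{M}(p;x)[\eta(p;z)]_{1}\phi_{N}(p;y)|0\rangle$ in two ways via Theorems 3.1 and 3.3, identifying the residual matrix element with $C_{MN}(p;x,y)\,\Pi$ by its definition, and equating. Your coefficient bookkeeping, including the simplification via $\Theta_{p}(t^{-1})=-t^{-1}\Theta_{p}(t)$, checks out (the paper leaves this last step implicit), with only harmless equation-number slips in your references.
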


\begin{proof}[\textit{Proof}]
The proof is straightforward. What we have to do is to calculate the matrix element $\langle 0|\phi^{\ast}_{M}(p;x)[\eta(p;z)]_{1}\phi_{N}(p;y)|0 \rangle$ by the theorem 3.1, 3.3 in different two ways as follows :
\begin{align*}
&\quad \langle 0|\phi^{\ast}_{M}(p;x)[\eta(p;z)]_{1}\phi_{N}(p;y)|0 \rangle \\
&=\frac{t^{-M+1}\Theta_{p}(t^{-1})}{(p;p)_{\infty}^{3}}H_{M}(q,t,p)_{x}\Pi(q,t,p)(\{x_{i}\}_{i=1}^{M},\{y_{j}\}_{j=1}^{N}) \\
&\hskip 4cm +t^{-M}C_{MN}(p;x,y)\Pi(q,t,p)(\{x_{i}\}_{i=1}^{M},\{y_{j}\}_{j=1}^{N}) \\
&=\frac{t^{-N+1}\Theta_{p}(t^{-1})}{(p;p)_{\infty}^{3}}H_{N}(q,t,p)_{y}\Pi(q,t,p)(\{x_{i}\}_{i=1}^{M},\{y_{j}\}_{j=1}^{N}) \\
&\hskip 4cm +t^{-N}C_{MN}(p;x,y)\Pi(q,t,p)(\{x_{i}\}_{i=1}^{M},\{y_{j}\}_{j=1}^{N}).
\end{align*}
Therefore we obtain the theorem 3.5. \quad $\Box$
\end{proof}

\begin{remark}
We can check the following :
\begin{align*}
C_{MN}(p;x,y)=&\left[\prod_{i=1}^{M}\frac{\Theta_{p}(t^{-1}x_{i}z)}{\Theta_{p}(x_{i}z)}\prod_{j=1}^{N}\frac{\Theta_{p}(z/y_{j})}{\Theta_{p}(t^{-1}z/y_{j})}\right]_{1} \\
\xrightarrow[p \to 0]{} &\left[\prod_{i=1}^{M}\frac{1-t^{-1}x_{i}z}{1-x_{i}z}\prod_{j=1}^{N}\frac{1-z/y_{j}}{1-t^{-1}z/y_{j}}\right]_{1}=1.
\end{align*}
Hence by taking the limit $p \to 0$ the formula (3.18) reduces to the equation (2.12).
\end{remark}

\subsection{Elliptic Ding-Iohara algebra $\mathcal{U}(q,t,p)$}
The elliptic Ding-Iohara algebra is an elliptic analog of the Ding-Iohara algebra introduced by the author. 

\begin{definition}[\textbf{Elliptic Ding-Iohara algebra $\mathcal{U}(q,t,p)$ [14]}] 
Set the structure function $g_{p}(x)$ as
\begin{align*}
g_{p}(x):=\frac{\Theta_{p}(qx)\Theta_{p}(t^{-1}x)\Theta_{p}(q^{-1}tx)}{\Theta_{p}(q^{-1}x)\Theta_{p}(tx)\Theta_{p}(qt^{-1}x)}.
\end{align*}
Then $g_{p}(x^{-1})=g_{p}(x)^{-1}$. Let $x^{\pm}(p;z):=\sum_{n\in{\mathbf{Z}}}x^{\pm}_{n}(p)z^{-n}$, $\psi^{\pm}(p;z):=\sum_{n\in{\mathbf{Z}}}\psi^{\pm}_{n}(p)z^{-n}$ be currents and $\gamma$ be central, invertible element satisfying the following relations :
\begin{align}
&\hskip 1cm [\psi^{\pm}(p;z), \psi^{\pm}(p;w)]=0, \quad \psi^{+}(p;z)\psi^{-}(p;w)=\frac{g_{p}(\gamma z/w)}{g_{p}(\gamma^{-1}z/w)}\psi^{-}(p;w)\psi^{+}(p;z),\notag\\
&\hskip 3cm \psi^{\pm}(p;z)x^{+}(p;w)=g_{p}\Big(\gamma^{\pm\frac{1}{2}}\frac{z}{w}\Big)x^{+}(p;w)\psi^{\pm}(p;z),\notag\\
&\hskip 3cm \psi^{\pm}(p;z)x^{-}(p;w)=g_{p}\Big(\gamma^{\mp\frac{1}{2}}\frac{z}{w}\Big)^{-1}x^{-}(p;w)\psi^{\pm}(p;z),\notag\\
&\hskip 3cm x^{\pm}(p;z)x^{\pm}(p;w)=g_{p}\Big(\frac{z}{w}\Big)^{\pm 1}x^{\pm}(p;w)x^{\pm}(p;z),\notag\\
&[x^{+}(p;z),x^{-}(p;w)]
=\frac{\Theta_{p}(q)\Theta_{p}(t^{-1})}{(p;p)_{\infty}^{3}\Theta_{p}(qt^{-1})}\bigg\{\delta\Big(\gamma\frac{w}{z}\Big)\psi^{+}(p;\gamma^{1/2}w){-}\delta\Big(\gamma^{-1}\frac{w}{z}\Big)\psi^{-}(p;\gamma^{-1/2}w)\bigg\}.
\end{align}
Here we define the delta function as $\delta(x):=\sum_{n\in\mathbf{Z}}x^{n}$. We define the elliptic Ding-Iohara algebra $\mathcal{U}(q,t,p)$ to be an associative $\mathbf{C}$-algebra generated by $\{x^{\pm}_{n}(p)\}_{n\in{\mathbf{Z}}}$, $\{\psi^{\pm}_{n}(p)\}_{n\in{\mathbf{Z}}}$ and $\gamma$. 
\end{definition}

\begin{theorem}[\textbf{Free field realization of the elliptic Ding-Iohara algebra $\mathcal{U}(q,t,p)$ [14]}] 
Set $\gamma:=(qt^{-1})^{-1/2}$ and operators $\varphi^{\pm}(p;z) : \mathcal{F} \to \mathcal{F}\otimes\mathbf{C}[[z,z^{-1}]]$ as 
\begin{align*}
\varphi^{+}(p;z):=\bm{:}\eta(p;\gamma^{1/2}z)\xi(p;\gamma^{-1/2}z)\bm{:}, \quad \varphi^{-}(p;z):=\bm{:}\eta(p;\gamma^{-1/2}z)\xi(p;\gamma^{1/2}z)\bm{:}.
\end{align*}
Then the map
\begin{align*}
x^{+}(p;z) \mapsto \eta(p;z), \quad x^{-}(p;z) \mapsto \xi(p;z), \quad \psi^{\pm}(p;z) \mapsto \varphi^{\pm}(p;z)
\end{align*}
gives a representation of the elliptic Ding-Iohara algebra $\mathcal{U}(q,t,p)$.
\end{theorem}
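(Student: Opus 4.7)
The plan is to verify each defining relation of $\mathcal{U}(q,t,p)$ by applying Wick's theorem to compute operator product expansions (OPEs) among $\eta(p;z)$, $\xi(p;z)$, and $\varphi^{\pm}(p;z)$. Since the $a$-bosons and $b$-bosons commute, each OPE factorizes into an $a$-contribution and a $b$-contribution, and the two pieces should conspire to produce elliptic theta functions, ultimately assembling into the structure function $g_p$.

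First I would compute the like-type OPEs $\eta(p;z)\eta(p;w)$ and $\xi(p;z)\xi(p;w)$. Using the commutation relations of $\{a_n\}$ and $\{b_n\}$, the $a$-contribution sums to a power series whose exponential produces, after combining with the $b$-contribution, an infinite product expressible via $\Theta_p$ at arguments $qw/z$, $t^{-1}w/z$, $q^{-1}tw/z$ (and their duals for $z/w$). Assembling these factors yields
\begin{align*}
\eta(p;z)\eta(p;w) = g_p(z/w)\,\eta(p;w)\eta(p;z),
\end{align*}
with the analogous inverse relation for $\xi\xi$; this establishes the $x^{\pm}(p;z)x^{\pm}(p;w) = g_p(z/w)^{\pm 1}x^{\pm}(p;w)x^{\pm}(p;z)$ relations. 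Next, I would compute the mixed OPE $\eta(p;z)\xi(p;w)$. The shifts by $\gamma^{\pm 1/2}$ in the definition of $\xi$ cause only the poles at $w = \gamma z$ and $w = \gamma^{-1}z$ to survive in the commutator, whose residues are precisely the normal-ordered products $\bm{:}\eta(p;\gamma^{\mp 1/2}w)\xi(p;\gamma^{\pm 1/2}w)\bm{:} = \varphi^{\pm}(p;\gamma^{\pm 1/2}w)$. Here the key analytic input is the elliptic theta–delta identity
\begin{align*}
\frac{1}{\Theta_p(x)} + \frac{x^{-1}}{\Theta_p(x^{-1})} = \frac{\delta(x)}{(p;p)_{\infty}^3},
\end{align*}
which converts the difference of boundary OPEs into the delta-function expression on the right-hand side of the last relation in (3.19), with the prefactor $\Theta_p(q)\Theta_p(t^{-1})/((p;p)_\infty^3\Theta_p(qt^{-1}))$ arising from evaluating the scalar OPE factor at the poles $w=\gamma^{\pm 1}z$.

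For the $\psi^{\pm}$–$x^{\pm}$ relations, I would use that $\varphi^{\pm}(p;z)$ is the normal ordered product $\bm{:}\eta(p;\gamma^{\pm 1/2}z)\xi(p;\gamma^{\mp 1/2}z)\bm{:}$, so contracting with $\eta(p;w)$ or $\xi(p;w)$ multiplies the $\eta\eta$ and $\xi\eta$ (resp. $\eta\xi$ and $\xi\xi$) OPE factors. The shifts by $\gamma^{\pm 1/2}$ inside $\varphi^{\pm}$ combine with the $\gamma$-shifts inside $\xi$ so that cancellations leave exactly $g_p(\gamma^{\pm 1/2}z/w)$ or its inverse; this gives the $\psi^{\pm}x^{\pm}$ relations. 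The $\psi^{\pm}\psi^{\pm}$ commutativity and the $\psi^+\psi^-$ relation are analogous but require tracking four OPE factors whose theta function arguments combine, after the $\gamma$-shifts, into the ratio $g_p(\gamma z/w)/g_p(\gamma^{-1}z/w)$.

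The main obstacle will be the bookkeeping in the mixed $\eta\xi$ computation: one must verify that the scalar factor, a ratio of theta functions from the $a$-contractions multiplied by the corresponding $b$-contraction, simplifies exactly at $w = \gamma^{\pm 1}z$ to the announced constant, and that the residual operator parts regroup into $\varphi^{\pm}$ at the shifted arguments $\gamma^{\pm 1/2}w$. Once those two evaluations are carried out cleanly, all remaining relations follow by the same mechanism, and the central element $\gamma$ acts as the scalar $(qt^{-1})^{-1/2}$, completing the verification. \quad $\Box$
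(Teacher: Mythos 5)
Your proposal is correct and coincides with the paper's approach: the paper states this theorem without proof (citing [14]), and its trigonometric counterpart, Proposition 2.7, is justified by precisely the Wick's-theorem OPE verification you outline, with the identity $\frac{1}{\Theta_{p}(x)}+\frac{x^{-1}}{\Theta_{p}(x^{-1})}=\frac{1}{(p;p)_{\infty}^{3}}\delta(x)$ producing the $[x^{+},x^{-}]$ relation exactly as it produces the delta term in the proof of Theorem 3.1. The only blemish is a labeling slip in the mixed OPE: the residues at $z=\gamma^{\pm 1}w$ are $\bm{:}\eta(p;\gamma^{\pm 1}w)\xi(p;w)\bm{:}=\varphi^{\pm}(p;\gamma^{\pm 1/2}w)$, whereas the product you wrote, $\bm{:}\eta(p;\gamma^{\mp 1/2}w)\xi(p;\gamma^{\pm 1/2}w)\bm{:}$, equals $\varphi^{\mp}(p;w)$ --- though the targets $\varphi^{\pm}(p;\gamma^{\pm 1/2}w)$ you name are the right ones.
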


\subsection{Elliptic Feigin-Odesskii algebra $\mathcal{A}(p)$}
The elliptic Feigin-Odesskii algebra is defined in the similar way of the trigonometric case except the emergence of elliptic functions [8].

\begin{definition}[\textbf{Elliptic Feigin-Odesskii algebra $\mathcal{A}(p)$}] 
Define a $n$-variable function $\varepsilon_{n}(q,p;x) \,(n\in\mathbf{Z}_{>0})$ as follows.
\begin{align}
\varepsilon_{n}(q,p;x):=\prod_{1\leq{a}<b\leq{n}}\frac{\Theta_{p}(qx_{a}/x_{b})\Theta_{p}(q^{-1}x_{a}/x_{b})}{\Theta_{p}(x_{a}/x_{b})^{2}}.
\end{align}
Set a function $\omega_{p}(x,y)$ as
\begin{align}
\omega_{p}(x,y):=\frac{\Theta_{p}(q^{-1}y/x)\Theta_{p}(ty/x)\Theta_{p}(qt^{-1}y/x)}{\Theta_{p}(y/x)^{3}}.
\end{align}
Define the star product $\ast$ as
\begin{align}
(f\ast g)(x_{1},\cdots,x_{m+n}):=\text{Sym}\bigg[f(x_{1},\cdots,x_{m})g(x_{m+1},\cdots,x_{m+n})\prod_{\genfrac{}{}{0pt}{1}{1\leq{\alpha}\leq{m}}{m+1\leq{\beta}\leq{m+n}}}\omega_{p}(x_{\alpha},x_{\beta})\bigg].
\end{align}
For a partition $\lambda$, we set $\varepsilon_{\lambda}(q,p;x)$ as
\begin{align}
\varepsilon_{\lambda}(q,p;x):=\varepsilon_{\lambda_{1}}(q,p;x)\ast\cdots\ast\varepsilon_{\lambda_{\ell(\lambda)}}(q,p;x).
\end{align}
Set $\mathcal{A}_{0}(p):=\mathbf{C}$, $\mathcal{A}_{n}(p):=\text{span}\{\varepsilon_{\lambda}(q,p;x) : |\lambda|=n\}$ $(n\geq{1})$. We define the elliptic Feigin-Odesskii algebra as $\mathcal{A}(p):=\bigoplus_{n\geq{0}}\mathcal{A}_{n}(p)$ whose algebraic structure is given by the star product $\ast$.
\end{definition}

Similar to the trigonometric case, the following is shown [8]. 

\begin{prop}[] 
The elliptic Feigin-Odesskii algebra $(\mathcal{A}(p),\ast)$ is unital, associative, and commutative algebra.
\end{prop}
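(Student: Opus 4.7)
The plan is to verify the three properties in order. Unitality and associativity follow formally from the definition of the star product; commutativity is the substantive point and requires a nontrivial theta-function identity.

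For \emph{unitality}, take $1 \in \mathcal{A}_{0}(p) = \mathbf{C}$ as the candidate unit. For $f \in \mathcal{A}_{n}(p)$, both $f \ast 1$ and $1 \ast f$ have an empty cross-pair product of $\omega_{p}$-factors, so the symmetrization is applied to a function already symmetric in its $n$ arguments. The symmetry of $\varepsilon_{\lambda}(q,p;x)$ follows inductively: the generator $\varepsilon_{n}(q,p;x)$ is visibly $S_{n}$-invariant (the factor $\Theta_{p}(qx_{a}/x_{b})\Theta_{p}(q^{-1}x_{a}/x_{b})/\Theta_{p}(x_{a}/x_{b})^{2}$ is invariant under $x_{a}\leftrightarrow x_{b}$ using $\Theta_{p}(u)=-u\,\Theta_{p}(u^{-1})$), and iterated $\ast$-products are symmetric by definition. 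Hence $f \ast 1 = 1 \ast f = f$.

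For \emph{associativity}, I would unfold $(f \ast g) \ast h$ and $f \ast (g \ast h)$ for $f \in \mathcal{A}_{m}(p)$, $g \in \mathcal{A}_{n}(p)$, $h \in \mathcal{A}_{k}(p)$, and check that both equal
\begin{align*}
\mathrm{Sym}_{S_{m+n+k}}\!\left[f(x_{1},\ldots,x_{m})\, g(x_{m+1},\ldots,x_{m+n})\, h(x_{m+n+1},\ldots,x_{m+n+k}) \prod_{\alpha,\beta} \omega_{p}(x_{\alpha},x_{\beta})\right],
\end{align*}
where $(\alpha,\beta)$ ranges over all pairs lying in distinct blocks. The key lemma is that for any $F$ and any subgroup $H \leq S_{N}$, $\mathrm{Sym}_{S_{N}}[\mathrm{Sym}_{H} F] = \mathrm{Sym}_{S_{N}}[F]$; applying this absorbs the inner symmetrization of either $(f \ast g)$ or $(g \ast h)$ into the outer one, after which both expressions visibly agree.

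\emph{Commutativity} is the main obstacle. By associativity and bilinearity it suffices to prove $\varepsilon_{m}(q,p;x) \ast \varepsilon_{n}(q,p;x) = \varepsilon_{n}(q,p;x) \ast \varepsilon_{m}(q,p;x)$. I would use the factorization
\begin{align*}
\varepsilon_{m+n}(q,p;x) = \varepsilon_{m}(q,p;x_{1},\ldots,x_{m})\, \varepsilon_{n}(q,p;x_{m+1},\ldots,x_{m+n}) \prod_{\substack{1 \leq \alpha \leq m\\ m+1 \leq \beta \leq m+n}} \frac{\Theta_{p}(qx_{\alpha}/x_{\beta})\Theta_{p}(q^{-1}x_{\alpha}/x_{\beta})}{\Theta_{p}(x_{\alpha}/x_{\beta})^{2}}
\end{align*}
to rewrite the pre-symmetrization integrand of $\varepsilon_{m}\ast\varepsilon_{n}$ as $\varepsilon_{m+n}(q,p;x) \cdot K_{m,n}(x)$, where $K_{m,n}$ is the residual ``crossing kernel'' built from $\omega_{p}$ and the theta-function ratios above (a short computation using $\Theta_{p}(u)=-u\,\Theta_{p}(u^{-1})$ yields the one-pair form $\Theta_{p}(t^{-1}x_{\alpha}/x_{\beta})\Theta_{p}(q^{-1}tx_{\alpha}/x_{\beta})/[\Theta_{p}(x_{\alpha}/x_{\beta})\Theta_{p}(q^{-1}x_{\alpha}/x_{\beta})]$). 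Since $\varepsilon_{m+n}$ is fully $S_{m+n}$-symmetric, it factors out of $\mathrm{Sym}$, and commutativity reduces to the identity $\mathrm{Sym}_{S_{m+n}}[K_{m,n}] = \mathrm{Sym}_{S_{m+n}}[K_{n,m}]$. I expect this theta-identity to be the hardest step; the plan is to imitate the partial-fraction argument used in [8] for the trigonometric case, with the systematic replacement $(x-y) \leftrightarrow \Theta_{p}(y/x)$ and the reflection $\Theta_{p}(u) = -u\,\Theta_{p}(u^{-1})$ absorbing the resulting sign and scale factors.
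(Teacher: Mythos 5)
Your unitality and associativity steps are sound: the generators $\varepsilon_{n}(q,p;x)$ are indeed $S_{n}$-invariant by the reflection $\Theta_{p}(u)=-u\,\Theta_{p}(u^{-1})$ (the sign and scale factors cancel exactly as you say), and the absorption lemma $\mathrm{Sym}_{S_{N}}[\mathrm{Sym}_{H}F]=\mathrm{Sym}_{S_{N}}[F]$ does the job for associativity, granted the small point (which holds) that the cross factor between the merged block and the remaining variables is invariant under the inner symmetrization group, so it can be pulled inside $\mathrm{Sym}_{H}$. Your reduction of commutativity to the generators is also the right move --- closure of $\bigoplus_{n}\mathcal{A}_{n}(p)$ under $\ast$ in terms of the basis $\varepsilon_{\lambda}$ indexed by partitions itself uses commutativity of the $\varepsilon_{r}$, so nothing less would suffice --- and I checked your crossing-kernel computation: the pre-symmetrization integrand of $\varepsilon_{m}\ast\varepsilon_{n}$ is $\varepsilon_{m+n}$ times a product over cross pairs of $\Theta_{p}(t^{-1}x_{\alpha}/x_{\beta})\Theta_{p}(q^{-1}tx_{\alpha}/x_{\beta})/\Theta_{p}(x_{\alpha}/x_{\beta})\Theta_{p}(q^{-1}x_{\alpha}/x_{\beta})$, exactly as you state. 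Note, though, that the paper offers no proof to compare against: it says only ``Similar to the trigonometric case, the following is shown [8]'', so the entire burden of the proposition sits in the one step you defer.

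And that step is a genuine gap, not a compression. The identity $\mathrm{Sym}[K_{m,n}]=\mathrm{Sym}[K_{n,m}]$ is trivial for $m=n$ (the block swap is itself a permutation) but is the whole content when $m\neq n$, and your proposed method --- transcribing the trigonometric partial-fraction argument via $(x-y)\leftrightarrow\Theta_{p}(y/x)$ --- does not go through mechanically. Partial fractions is a rational-function phenomenon resting on degree counts at $0$ and $\infty$; theta functions admit analogous expansions only when ellipticity (invariance under $z\mapsto pz$) is respected, and the correct elliptic expansions then acquire extra theta factors with shifted arguments that have no trigonometric counterpart --- compare the expansion used in the paper's proof of Theorem 3.1, where the term $\Theta_{p}(t^{-N+1}x_{i}z)$ appears and is not obtained from the trigonometric formula by naive substitution. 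What the step actually needs is an intrinsically elliptic argument: each factor $k(x,y)=\Theta_{p}(t^{-1}x/y)\Theta_{p}(q^{-1}tx/y)/\Theta_{p}(x/y)\Theta_{p}(q^{-1}x/y)$ is $p$-elliptic in each variable (the quasi-periodicity multipliers of numerator and denominator coincide), so the difference $\mathrm{Sym}[K_{m,n}]-\mathrm{Sym}[K_{n,m}]$ is elliptic in each $x_{i}$ with at most simple poles along $x_{i}\in x_{j}p^{\mathbf{Z}}$ and $x_{i}\in qx_{j}p^{\mathbf{Z}}$; one must show the residues cancel under symmetrization, invoke a Liouville-type argument to conclude the difference is constant, and then kill the constant by a suitable specialization --- or else reproduce the elliptic proof given in [8]. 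As it stands, your proposal correctly isolates the key identity but proves everything except it.
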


\subsection{Commutative families $\mathcal{M}(p)$, $\mathcal{M}^{\prime}(p)$}
For the operators $E(p;z)$, $F(p;z)$ which are used in the theorem 3.2, we have the following proposition [14].

\begin{prop}
(1) Operators $E(p;z)$, $F(p;z)$ satisfy the relation as
\begin{align}
E(p;z)E(p;w)&=g_{p}\Big(\frac{z}{w}\Big)E(p;w)E(p;z), \\
F(p;z)F(p;w)&=g_{p}\Big(\frac{z}{w}\Big)^{-1}F(p;w)F(p;z).
\end{align}
Due to the relations operator-valued functions as
\begin{align*}
\prod_{1\leq{i<j}\leq{N}}\omega_{p}(x_{i},x_{j})^{-1}E(p;x_{1})\cdots E(p;x_{N}), \quad \prod_{1\leq{i<j}\leq{N}}\omega^{\prime}_{p}(x_{i},x_{j})^{-1}F(p;x_{1})\cdots F(p;x_{N})
\end{align*}
are symmetric in $x_{1},\cdots,x_{N}$.

(2) Commutator of $E(p;z)$ and $F(p;z)$ takes the following form.
\begin{align}
[E(p;z),F(p;w)]=\frac{\Theta_{p}(q)\Theta_{p}(t^{-1})}{(p;p)_{\infty}^{3}\Theta_{p}(qt^{-1})}\delta\Big(\gamma\frac{w}{z}\Big)\{\varphi^{+}(p;\gamma^{1/2}w)-\varphi^{+}(p;\gamma^{1/2}p^{-1}w)\}.
\end{align}
\end{prop}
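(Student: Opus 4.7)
Plan. Two basic observations underlie the whole argument. First, $t^{\pm a_0}$ commutes with every $a_n, b_n$ for $n\neq 0$, hence with each of the currents $\eta(p;z)$, $\widetilde\eta(p;z)$, $\xi(p;z)$, $\widetilde\xi(p;z)$, so I may freely move zero-mode factors past bosonic operators. Second, the identity $\Theta_p(px) = -x^{-1}\Theta_p(x)$ implies that the scalar Wick-contraction functions that arise among $\eta, \xi, \widetilde\eta, \widetilde\xi$ are invariant under the shift $z\mapsto pz$; consequently replacing $(\eta(p;z))_+$ by $(\eta(p;p^{-1}z))_+$ (or the corresponding shift for $\xi$) inside a contraction does not alter its scalar prefactor.

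For part~(1), I expand $E(p;z)E(p;w)$ using $E(p;z) = \eta(p;z) - \widetilde\eta(p;z)t^{-a_0}$ into four summands, and reduce the claim to the exchange relation $A(p;z)B(p;w) = g_p(z/w)B(p;w)A(p;z)$ for every pair $A, B \in \{\eta, \widetilde\eta\}$. The case $(A,B)=(\eta, \eta)$ is already part of the free field realization of $\mathcal{U}(q,t,p)$ (Theorem~3.9); the three mixed cases follow by a direct Wick computation in which, thanks to the $p$-invariance observed above, the ratio of the two Wick factors is again $g_p(z/w)$. Granted the exchange relation, the symmetry of $\prod_{i<j}\omega_p(x_i,x_j)^{-1}E(p;x_1)\cdots E(p;x_N)$ in $x_1,\ldots,x_N$ follows from the identity $g_p(z/w) = \omega_p(z,w)/\omega_p(w,z)$, mirroring the trigonometric argument at equation~(2.22). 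The statement for $F(p;z)$ is completely parallel, with $g_p^{-1}$ and $\omega_p'$ in place of $g_p$ and $\omega_p$.

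For part~(2), I expand
\begin{align*}
[E(p;z), F(p;w)] = [\eta, \xi] - [\eta, \widetilde\xi]\,t^{a_0} - [\widetilde\eta, \xi]\,t^{-a_0} + [\widetilde\eta, \widetilde\xi],
\end{align*}
using $t^{-a_0}t^{a_0}=1$ and the commutativity of $t^{\pm a_0}$ with the bosonic currents. The first commutator is the Ding-Iohara commutator from Theorem~3.9 and contributes the pair $\delta(\gamma w/z)\varphi^+(p;\gamma^{1/2}w)$ and $-\delta(\gamma^{-1}w/z)\varphi^-(p;\gamma^{-1/2}w)$ times the announced prefactor. The remaining three commutators I compute by Wick's theorem: by the $p$-invariance observation each has the same scalar Wick prefactor as $[\eta,\xi]$, so all four acquire the same delta-function support $\delta(\gamma^{\pm 1}w/z)$, differing only in the normal-ordered operator obtained by evaluating at that support. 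The elliptic delta identity
\begin{align*}
\frac{1}{\Theta_p(x)}+\frac{x^{-1}}{\Theta_p(x^{-1})} = \frac{1}{(p;p)_\infty^3}\delta(x),
\end{align*}
already used in the proof of Theorem~3.1, is the main tool for this conversion. Summing the four pieces, the $t^{\pm a_0}$ prefactors must drop out (the right-hand side of the claim carries none), and the $\delta(\gamma^{-1}w/z)\varphi^-$ contributions must cancel, leaving only the two $\delta(\gamma w/z)\varphi^+$ terms with opposite signs.

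The main obstacle is the operator bookkeeping in part~(2): after the common scalar Wick prefactor is extracted, I still have to identify the normal-ordered operators $\bm{:}\eta(p;\gamma^{\pm 1}w)\widetilde\xi(p;w)\bm{:}$, $\bm{:}\widetilde\eta(p;\gamma^{\pm 1}w)\xi(p;w)\bm{:}$ and $\bm{:}\widetilde\eta(p;\gamma^{\pm 1}w)\widetilde\xi(p;w)\bm{:}$ in terms of $\varphi^\pm(p;\cdot)$, and verify that the unwanted $t^{\pm a_0}$ and $\varphi^-$ contributions cancel in the full sum. The shift $w\mapsto p^{-1}w$ in the positive parts of $\widetilde\eta$ and $\widetilde\xi$ is precisely what produces the second $\varphi^+$ argument $\gamma^{1/2}p^{-1}w$ on the right-hand side of the claim.
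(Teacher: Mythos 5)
Before comparing: the paper states this proposition with a citation to [14] and gives no proof of its own, so your attempt must be judged against the direct normal-ordering computation that the citation refers to. Your part (1) survives this test: for any pair drawn from $\{\eta(p;\cdot),\widetilde{\eta}(p;\cdot)\}$ the full Wick contraction equals $\Theta_{p}(w/z)\Theta_{p}(qt^{-1}w/z)/\Theta_{p}(qw/z)\Theta_{p}(t^{-1}w/z)$, which is genuinely invariant under $w/z\mapsto p\,w/z$, so the shift in the plus-part of $\widetilde{\eta}$ leaves the exchange factor $g_{p}(z/w)$ intact for all four cross terms, $t^{-a_{0}}$ commutes through, and $g_{p}(z/w)=\omega_{p}(z,w)/\omega_{p}(w,z)$ gives the symmetry exactly as in the trigonometric argument. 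The same works for $F$.

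Part (2), however, contains a genuine gap: your guiding premise that \emph{all} contraction functions among $\eta,\xi,\widetilde{\eta},\widetilde{\xi}$ are $p$-shift invariant, so that ``all four commutators acquire the same delta-function support,'' is false for the $\eta$--$\xi$ contractions. Writing $c_{n}:=(1-t^{-n})(1-q^{n})/n(1-p^{n})$, one finds $\eta(p;z)\xi(p;w)=\exp\big(\sum_{n>0}c_{n}[(\gamma w/z)^{n}+(p\gamma z/w)^{n}]\big)\bm{:}\eta(p;z)\xi(p;w)\bm{:}$; since $qt^{-1}\gamma=\gamma^{-1}$, the two Pochhammer groups do not assemble into theta functions (the second argument is $p\gamma z/w$, not $p\gamma^{-1}z/w$), so this prefactor is \emph{not} elliptic in $z$. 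The correct computation then shows three things your plan misses. First, for the mixed terms \emph{both} orderings of $\eta(p;z)\widetilde{\xi}(p;w)$ produce the \emph{identical} prefactor $\exp\big(\sum_{n>0}c_{n}[(\gamma w/z)^{n}+(p\gamma z/w)^{n}]\big)$, expanded in the same domains, so $[\eta(p;z),\widetilde{\xi}(p;w)]=0$ and $[\widetilde{\eta}(p;z),\xi(p;w)]=0$ identically — no delta functions at all. This identical vanishing, not a cancellation inside the four-term sum, is the only possible mechanism for the disappearance of the zero modes: the $t^{a_{0}}$- and $t^{-a_{0}}$-dressed terms multiply different normal-ordered operators and cannot cancel each other, so if the mixed commutators had the delta support you assert, the claimed formula (3.26) would be violated. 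Second, in $[\widetilde{\eta},\widetilde{\xi}]$ the two orderings reproduce the two prefactors of $[\eta,\xi]$ with their roles \emph{interchanged}, so $[\widetilde{\eta}(p;z),\widetilde{\xi}(p;w)]$ carries an overall minus sign relative to $[\eta(p;z),\xi(p;w)]$ — a sign your argument never generates, and without which the $\varphi^{-}$ terms would add instead of cancel. Third, the identifications at the supports, namely $\bm{:}\widetilde{\eta}(p;\gamma w)\widetilde{\xi}(p;w)\bm{:}=\varphi^{+}(p;\gamma^{1/2}p^{-1}w)$ and $\bm{:}\widetilde{\eta}(p;\gamma^{-1}w)\widetilde{\xi}(p;w)\bm{:}=\varphi^{-}(p;\gamma^{-1/2}w)$, rest on the nontrivial fact that the minus-mode (resp.\ plus-mode) parts of $\eta$ and $\xi$ cancel exactly at $z=\gamma w$ (resp.\ $z=\gamma^{-1}w$); you flagged this bookkeeping as the ``main obstacle'' but did not resolve it, and only after it is done do the $\varphi^{-}$ contributions cancel between $[\eta,\xi]$ and $[\widetilde{\eta},\widetilde{\xi}]$, leaving precisely $c(q,t,p)\,\delta(\gamma w/z)\{\varphi^{+}(p;\gamma^{1/2}w)-\varphi^{+}(p;\gamma^{1/2}p^{-1}w)\}$.
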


From the relation (3.26) we have $[[E(p;z)]_{1},[F(p;w)]_{1}]=0$. This corresponds to the commutativity of the elliptic Macdonald operators $[H_{N}(q,t,p),H_{N}(q^{-1},t^{-1},p)]=0$. 

\begin{definition}[\textbf{Map $\mathcal{O}_{p}$}] 
We define a linear map $\mathcal{O}_{p} : \mathcal{A}(p) \to \text{End}(\mathcal{F}_{\alpha})$ $(\alpha\in\mathbf{C})$ as follows.
\begin{align}
\mathcal{O}_{p}(f):=\bigg[f(z_{1},\cdots,z_{n})\prod_{1\leq{i<j}\leq{n}}\omega_{p}(z_{i},z_{j})^{-1}E(p;z_{1})\cdots E(p;z_{n})\bigg]_{1} \quad(f\in\mathcal{A}_{n}(p)).
\end{align}
Here $[f(z_{1},\cdots,z_{n})]_{1}$ denotes the constant term of $f(z_{1},\cdots,z_{n})$ in $z_{1},\cdots,z_{n}$. We extend the map linearly.
\end{definition}

In the similar way of the trigonometric case, we can check the following.

\begin{prop}[] 
The map $\mathcal{O}_{p}$ and the star product $\ast$ are compatible : for $f,g\in\mathcal{A}(p)$, we have $\mathcal{O}_{p}(f\ast g)=\mathcal{O}_{p}(f)\mathcal{O}_{p}(g)$.
\end{prop}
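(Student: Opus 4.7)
The plan is to mimic the trigonometric argument used in the proof of Proposition 2.12, with the elliptic structure function $g_p$, the weight $\omega_p$, and the operator $E(p;z)$ replacing their trigonometric counterparts. The crucial input is Proposition 3.10(1), which guarantees both the exchange relation
\[
E(p;z)E(p;w)=g_{p}\!\left(\tfrac{z}{w}\right)E(p;w)E(p;z)
\]
and, equivalently, the fact that the operator-valued function
\[
\prod_{1\leq i<j\leq N}\omega_{p}(x_{i},x_{j})^{-1}E(p;x_{1})\cdots E(p;x_{N})
\]
is symmetric in $x_{1},\dots,x_{N}$. Combined with the two formal-series facts used in the trigonometric proof, namely $[\operatorname{Sym}(F)]_{1}=[F]_{1}$ and $\operatorname{Sym}(f\cdot g)=f\cdot\operatorname{Sym}(g)$ whenever $f$ is symmetric, this is essentially all we need.

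First I would take $f\in\mathcal{A}_{m}(p)$ and $g\in\mathcal{A}_{n}(p)$ and expand $\mathcal{O}_{p}(f\ast g)$ using the definitions (3.22) and (3.27):
\begin{align*}
\mathcal{O}_{p}(f\ast g)=\bigg[&\operatorname{Sym}\!\Big(f(x_{1},\dots,x_{m})\,g(x_{m+1},\dots,x_{m+n})\!\!\prod_{\genfrac{}{}{0pt}{1}{1\leq\alpha\leq m}{m+1\leq\beta\leq m+n}}\!\!\omega_{p}(x_{\alpha},x_{\beta})\Big)\\
&\times\prod_{1\leq i<j\leq m+n}\omega_{p}(x_{i},x_{j})^{-1}E(p;x_{1})\cdots E(p;x_{m+n})\bigg]_{1}.
\end{align*}
Next I would use the symmetry of the second factor (from Proposition 3.10(1)) together with $\operatorname{Sym}(f\cdot g)=f\cdot\operatorname{Sym}(g)$ for symmetric $f$ to pull the whole operator-valued symmetric expression inside the symmetrizer, and then use $[\operatorname{Sym}(F)]_{1}=[F]_{1}$ to drop the outer Sym altogether.

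Then I would split the product $\prod_{1\leq i<j\leq m+n}\omega_{p}(x_{i},x_{j})^{-1}$ into the three ranges $\{i<j\leq m\}$, $\{m<i<j\}$, and the mixed range $\{i\leq m<j\}$. The mixed factors cancel exactly against the product $\prod_{\alpha\leq m<\beta}\omega_{p}(x_{\alpha},x_{\beta})$ coming from the star product, leaving the product of the two diagonal blocks. At this stage the integrand factorizes as
\[
\Big(f\!\!\prod_{1\leq i<j\leq m}\!\!\omega_{p}(x_{i},x_{j})^{-1}E(p;x_{1})\cdots E(p;x_{m})\Big)\Big(g\!\!\prod_{m+1\leq i<j\leq m+n}\!\!\omega_{p}(x_{i},x_{j})^{-1}E(p;x_{m+1})\cdots E(p;x_{m+n})\Big),
\]
whose constant term in all $m+n$ variables is the product of the constant terms, namely $\mathcal{O}_{p}(f)\mathcal{O}_{p}(g)$.

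I do not expect a genuine obstacle here, since Proposition 3.10 already upgrades the relevant exchange identity to the elliptic setting: the step-by-step logic of Proposition 2.12 transfers verbatim once $\eta(z)\mapsto E(p;z)$, $g\mapsto g_{p}$, and $\omega\mapsto\omega_{p}$. The only thing to be slightly careful about is that $E(p;z)$ is not of pure normal-ordered exponential type (it is a difference of two such operators), so one should confirm that the splitting of the constant term into a product of constant terms over disjoint variable sets is still legitimate. This follows because the factorization above holds at the level of formal Laurent series in $x_{1},\dots,x_{m+n}$ with operator coefficients, and the variable sets $\{x_{1},\dots,x_{m}\}$ and $\{x_{m+1},\dots,x_{m+n}\}$ are disjoint, so taking constant terms is multiplicative. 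This completes the argument, giving $\mathcal{O}_{p}(f\ast g)=\mathcal{O}_{p}(f)\mathcal{O}_{p}(g)$ and, by linear extension, the proposition.
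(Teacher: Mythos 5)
Your proposal is correct and is exactly the paper's intended argument: the paper proves this proposition simply by remarking that it follows ``in the similar way of the trigonometric case,'' i.e.\ by transferring the proof of Proposition 2.12 with $\eta\mapsto E(p;\cdot)$, $g\mapsto g_{p}$, $\omega\mapsto\omega_{p}$, using Proposition 3.10(1) for the exchange relation and symmetry, which is precisely what you did. Your extra remark that $E(p;z)$ is a difference of two vertex-type operators but that constant-term extraction over disjoint variable sets remains multiplicative at the level of formal Laurent series is a sound (and welcome) verification of a point the paper leaves implicit.
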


\begin{theorem}[\textbf{Commutative family $\mathcal{M}(p)$}]
(1) Set $\mathcal{M}(p):=\mathcal{O}_{p}(\mathcal{A}(p))$. The space is a commutative algebra of boson operators.

(2) The space $\mathcal{M}(p)|_{\mathbf{C}\phi_{N}(p;x)|N \rangle}$ is a set of commuting elliptic $q$-difference operators which contains the elliptic Macdonald operator $H_{N}(q,t,p)$ (commutative family of the elliptic Macdonald operator $H_{N}(q,t,p)$).
\end{theorem}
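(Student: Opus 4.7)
The plan is to mirror the trigonometric derivation (Corollary 2.13), substituting the current $E(p;z)$ for $\eta(z)$, the elliptic Feigin-Odesskii algebra $\mathcal{A}(p)$ for $\mathcal{A}$, and Theorem 3.2 for Proposition 2.1. The two structural inputs are already available: the algebra $(\mathcal{A}(p),\ast)$ is commutative, and the immediately preceding proposition asserts the compatibility $\mathcal{O}_{p}(f\ast g)=\mathcal{O}_{p}(f)\mathcal{O}_{p}(g)$.

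Part (1) is then a one-line consequence of these two facts: for any $f,g\in\mathcal{A}(p)$,
\[
\mathcal{O}_{p}(f)\mathcal{O}_{p}(g)=\mathcal{O}_{p}(f\ast g)=\mathcal{O}_{p}(g\ast f)=\mathcal{O}_{p}(g)\mathcal{O}_{p}(f),
\]
so $[\mathcal{O}_{p}(f),\mathcal{O}_{p}(g)]=0$ and $\mathcal{M}(p)=\mathcal{O}_{p}(\mathcal{A}(p))$ is a commutative algebra of boson operators (the boson structure being built in, since each $\mathcal{O}_{p}(f)$ is a constant-term extraction of a product of currents $E(p;z_{i})$).

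For part (2), I would first observe that $\mathcal{A}(p)$ is generated as a $\ast$-algebra by the functions $\varepsilon_{r}(q,p;z)$ ($r\in\mathbf{Z}_{>0}$), hence $\mathcal{M}(p)$ is generated as an associative algebra by $\{\mathcal{O}_{p}(\varepsilon_{r}(q,p;z))\}_{r\geq 1}$. Since $\varepsilon_{1}(q,p;z)\equiv 1$, we have $\mathcal{O}_{p}(\varepsilon_{1}(q,p;z))=[E(p;z)]_{1}$, and Theorem 3.2 gives directly
\[
[E(p;z)]_{1}\,\phi_{N}(p;x)|N\rangle=\frac{t^{-N+1}\Theta_{p}(t^{-1})}{(p;p)_{\infty}^{3}}\,H_{N}(q,t,p)\,\phi_{N}(p;x)|N\rangle,
\]
so $H_{N}(q,t,p)$ sits inside $\mathcal{M}(p)|_{\mathbf{C}\phi_{N}(p;x)|N\rangle}$. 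For $r\geq 2$, I would compute the action of $\bigl[\varepsilon_{r}(q,p;z_{1},\ldots,z_{r})\prod_{i<j}\omega_{p}(z_{i},z_{j})^{-1}E(p;z_{1})\cdots E(p;z_{r})\bigr]_{1}$ on $\phi_{N}(p;x)|N\rangle$ by iterating the OPE $\eta(p;z)\phi(p;w)=\frac{\Theta_{p}(w/z)}{\Theta_{p}(tw/z)}\bm{:}\eta(p;z)\phi(p;w)\bm{:}$, applying an elliptic partial-fraction decomposition analogous to the one used in the proof of Theorem 3.1, and exploiting the theta/delta identity $\Theta_{p}(x)^{-1}+x^{-1}\Theta_{p}(x^{-1})^{-1}=(p;p)_{\infty}^{-3}\delta(x)$ to extract residues at $z_{i}=tx_{k_{i}}$. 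This should collapse the expression onto an $r$-th order elliptic $q$-difference operator in $x_{1},\ldots,x_{N}$.

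The main obstacle is this last computation. Two things must be controlled simultaneously: the higher-order diagonal poles coming from $\prod_{i<j}\omega_{p}(z_{i},z_{j})^{-1}$, which require an iterated elliptic partial-fraction argument generalizing the single-variable one; and the $\widetilde{\eta}$-tail in $E(p;z_{i})=\eta(p;z_{i})-\widetilde{\eta}(p;z_{i})t^{-a_{0}}$, which must conspire with the zero-mode weight $t^{-a_{0}}|N\rangle=t^{-N}|N\rangle$ to cancel the non-difference contributions, exactly as the $t^{-N}$ correction in Theorem 3.1 eliminates the tail for $r=1$. Once these cancellations are verified, commutativity of the restricted operators is inherited from part (1), and simultaneous diagonalization by the elliptic Macdonald symmetric functions (the elliptic analogue of Proposition 2.17) confirms that $\mathcal{M}(p)|_{\mathbf{C}\phi_{N}(p;x)|N\rangle}$ is the asserted commutative family of elliptic $q$-difference operators containing $H_{N}(q,t,p)$.
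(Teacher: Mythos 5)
Your overall route is the paper's own: the paper states this theorem with no in-text proof, deferring to [15], and the intended argument is exactly the transplant of Corollary 2.13 that you perform --- part (1) from the commutativity of $(\mathcal{A}(p),\ast)$ plus the compatibility $\mathcal{O}_{p}(f\ast g)=\mathcal{O}_{p}(f)\mathcal{O}_{p}(g)$ (Proposition 3.13), and part (2) from the generation of $\mathcal{M}(p)$ by the $\mathcal{O}_{p}(\varepsilon_{r}(q,p;z))$ together with $\mathcal{O}_{p}(\varepsilon_{1})=[E(p;z)]_{1}$ and Theorem 3.2. Your identification of the two delicate points for $r\geq 2$ --- the diagonal factors $\prod_{i<j}\omega_{p}(z_{i},z_{j})^{-1}$ requiring an iterated elliptic partial-fraction expansion, and the tails $\widetilde{\eta}(p;z_{i})t^{-a_{0}}$ being controlled by the zero-mode weight $t^{-a_{0}}|N\rangle=t^{-N}|N\rangle$ --- matches the mechanism of the proof of Theorem 3.1, and the paper is no more explicit than you are on this step: its trigonometric counterpart likewise simply asserts that $\mathcal{O}(\varepsilon_{r}(q;z))$ acts as an $r$-th order $q$-difference operator.

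One sentence, however, must be struck: you invoke ``simultaneous diagonalization by the elliptic Macdonald symmetric functions (the elliptic analogue of Proposition 2.17).'' No such objects are available --- the paper states this explicitly just after Theorem 3.18 (\emph{``we don't have an elliptic analog of the Macdonald symmetric functions''}), and this absence is precisely why the commutativity $[\mathcal{M}(p),\mathcal{M}^{\prime}(p)]=0$ had to be proved directly via Lemma 3.19 instead of by the diagonalization argument of Proposition 2.17. Fortunately the appeal is redundant for the theorem at hand: commutativity of the restricted family $\mathcal{M}(p)|_{\mathbf{C}\phi_{N}(p;x)|N\rangle}$ is already inherited from part (1), exactly as in Corollary 2.13(2), so deleting that clause leaves your argument intact and in line with the paper's.
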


A commutative family of the elliptic Macdonald operator $H_{N}(q^{-1},t^{-1},p)$ is also constructed as follows.

\begin{definition}[\textbf{Map $\mathcal{O}_{p}^{\prime}$}] 
Set a function $\omega_{p}^{\prime}(x,y)$ as
\begin{align}
\omega_{p}^{\prime}(x,y):=\frac{\Theta_{p}(qy/x)\Theta_{p}(t^{-1}y/x)\Theta_{p}(q^{-1}ty/x)}{\Theta_{p}(y/x)^{3}}.
\end{align}
We define a linear map $\mathcal{O}_{p}^{\prime} : \mathcal{A}(p) \to \text{End}(\mathcal{F}_{\alpha})$ $(\alpha\in\mathbf{C})$ as follows.
\begin{align}
\mathcal{O}_{p}^{\prime}(f):=\bigg[f(z_{1},\cdots,z_{n})\prod_{1\leq{i<j}\leq{n}}\omega_{p}^{\prime}(z_{i},z_{j})^{-1}F(p;z_{1})\cdots F(p;z_{n})\bigg]_{1} \quad(f\in\mathcal{A}_{n}(p)).
\end{align}
We extend the map linearly.
\end{definition}

In the same way of the trigonometric case, we have the following lemma.

\begin{lemma}
Set another star product $\ast^{\prime}$ as
\begin{align}
(f\ast^{\prime} g)(x_{1},\cdots,x_{m+n}):=\text{\rm Sym}\bigg[f(x_{1},\cdots,x_{m})g(x_{m+1},\cdots,x_{m+n})\prod_{\genfrac{}{}{0pt}{1}{1\leq{\alpha}\leq{m}}{m+1\leq{\beta}\leq{m+n}}}\omega^{\prime}_{p}(x_{\alpha},x_{\beta})\bigg].
\end{align}
In the elliptic Feigin-Odesskii algebra $\mathcal{A}(p)$, we have $\ast^{\prime}=\ast$.
\end{lemma}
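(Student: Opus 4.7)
The plan is to mimic the trigonometric proof (Lemma 2.15) verbatim, with the single new ingredient being an elliptic analogue of the key identity $\omega(x,y)=\omega'(y,x)$. So the first step is to establish
\begin{align*}
\omega_{p}(x,y)=\omega_{p}^{\prime}(y,x).
\end{align*}
This is a direct calculation using $\Theta_{p}(u^{-1})=-u^{-1}\Theta_{p}(u)$ applied to each of the three theta factors in the numerator of $\omega_{p}^{\prime}(y,x)$ and the cube of theta factors in the denominator. The six resulting minus signs combine to $-1$ in the numerator and $-1$ in the denominator, and the monomial prefactors $q\cdot t^{-1}\cdot q^{-1}t=1$ clean up, so everything collapses onto the formula defining $\omega_{p}(x,y)$.

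Once that identity is in hand, the rest is purely combinatorial. For $f\in\mathcal{A}_{m}(p)$ and $g\in\mathcal{A}_{n}(p)$ I rewrite
\begin{align*}
(f\ast^{\prime}g)(x_{1},\cdots,x_{m+n})
=\text{Sym}\bigg[f(x_{1},\cdots,x_{m})g(x_{m+1},\cdots,x_{m+n})\prod_{\genfrac{}{}{0pt}{1}{1\leq\alpha\leq m}{m+1\leq\beta\leq m+n}}\omega_{p}(x_{\beta},x_{\alpha})\bigg],
\end{align*}
then apply the permutation swapping the first $m$ and last $n$ variables, which is absorbed into the symmetrizer. This turns the expression into $(g\ast f)(x_{1},\cdots,x_{m+n})$. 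Finally I invoke Proposition 3.11, the commutativity of $(\mathcal{A}(p),\ast)$, to replace $g\ast f$ by $f\ast g$. Since $\varepsilon_{\lambda}(q,p;x)$'s span each $\mathcal{A}_{n}(p)$ and the argument is linear, equality $\ast^{\prime}=\ast$ holds throughout $\mathcal{A}(p)$.

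The only non-routine step is the sign/monomial bookkeeping in the $\omega_{p}(x,y)=\omega_{p}^{\prime}(y,x)$ verification; everything else is a transcription of the trigonometric proof with $\omega$ replaced by $\omega_{p}$ and with commutativity of $\mathcal{A}(p)$ used in place of commutativity of $\mathcal{A}$.
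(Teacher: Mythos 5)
Your proposal is correct and matches the paper's intended argument: the paper proves Lemma 3.16 only by the remark ``in the same way of the trigonometric case,'' i.e., precisely your route via the reflection identity $\omega_{p}(x,y)=\omega_{p}^{\prime}(y,x)$ (which checks out: three signs $(-1)^{3}$ from $\Theta_{p}(u^{-1})=-u^{-1}\Theta_{p}(u)$ in the numerator against $(-1)^{3}$ from the cubed denominator, the prefactor $q\cdot t^{-1}\cdot q^{-1}t=1$, and the $u^{-3}$ factors cancelling), followed by absorbing the block-swap permutation into $\mathrm{Sym}$ and invoking commutativity of $(\mathcal{A}(p),\ast)$. One small citation slip: that commutativity is Proposition 3.10, not Proposition 3.11 (which concerns the $E$, $F$ relations).
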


We also have the following theorem [15].

\begin{theorem}[\textbf{Commutative family $\mathcal{M}^{\prime}(p)$}]
(1) Set $\mathcal{M}^{\prime}(p):=\mathcal{O}_{p}^{\prime}(\mathcal{A}(p))$. The space is a commutative algebra of boson operators. 

(2) The space $\mathcal{M}^{\prime}(p)|_{\mathbf{C}\phi_{N}(p;x)|N \rangle}$ is a set of commuting elliptic $q$-difference operators which contains the elliptic Macdonald operator $H_{N}(q^{-1},t^{-1},p)$ (commutative family of the elliptic Macdonald operator $H_{N}(q^{-1},t^{-1},p)$).
\end{theorem}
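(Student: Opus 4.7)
The plan is to mirror the arguments that established Corollary 2.16 and Theorem 3.11, but with $E(p;z)$ and $\omega_p$ replaced by $F(p;z)$ and $\omega_p^{\prime}$, and then invoke Lemma 3.12 to transfer commutativity from $\ast$ to $\ast^{\prime}$.

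First I would establish the compatibility of $\mathcal{O}_p^{\prime}$ with the star product $\ast^{\prime}$: for $f\in\mathcal{A}_m(p)$ and $g\in\mathcal{A}_n(p)$, the identity $\mathcal{O}_p^{\prime}(f\ast^{\prime}g)=\mathcal{O}_p^{\prime}(f)\mathcal{O}_p^{\prime}(g)$. The argument is formally identical to the proof of Proposition 2.12. The exchange relation $F(p;z)F(p;w)=g_p(z/w)^{-1}F(p;w)F(p;z)$ from Proposition 3.10, together with the readily verified identity $g_p(z/w)=\omega_p^{\prime}(z,w)/\omega_p^{\prime}(w,z)$ (obtained from $\omega_p(x,y)=\omega_p^{\prime}(y,x)$ and the quasi-periodicity $\Theta_p(px)=-x^{-1}\Theta_p(x)$), yields
\begin{align*}
\frac{1}{\omega_p^{\prime}(z,w)}F(p;z)F(p;w)=\frac{1}{\omega_p^{\prime}(w,z)}F(p;w)F(p;z),
\end{align*}
so that the operator-valued function $\prod_{1\leq i<j\leq N}\omega_p^{\prime}(x_i,x_j)^{-1}F(p;x_1)\cdots F(p;x_N)$ is symmetric in $x_1,\ldots,x_N$. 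The same constant-term and symmetrization manipulations as in Proposition 2.12 then close the argument.

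Combining this compatibility with Lemma 3.12 gives part (1) immediately. Since $\ast^{\prime}=\ast$ in $\mathcal{A}(p)$ and $(\mathcal{A}(p),\ast)$ is commutative by Proposition 3.9,
\begin{align*}
\mathcal{O}_p^{\prime}(f)\mathcal{O}_p^{\prime}(g)=\mathcal{O}_p^{\prime}(f\ast^{\prime}g)=\mathcal{O}_p^{\prime}(f\ast g)=\mathcal{O}_p^{\prime}(g\ast f)=\mathcal{O}_p^{\prime}(g)\mathcal{O}_p^{\prime}(f),
\end{align*}
showing that $\mathcal{M}^{\prime}(p):=\mathcal{O}_p^{\prime}(\mathcal{A}(p))$ is a commutative algebra of boson operators. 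For part (2), note that $\varepsilon_1(q,p;z)=1$, so $\mathcal{O}_p^{\prime}(\varepsilon_1(q,p;z))=[F(p;z)]_1$, and Theorem 3.2 then gives
\begin{align*}
\mathcal{O}_p^{\prime}(\varepsilon_1(q,p;z))\phi_N(p;x)|N\rangle=\frac{t^{N-1}\Theta_p(t)}{(p;p)_\infty^{3}}H_N(q^{-1},t^{-1},p)\phi_N(p;x)|N\rangle,
\end{align*}
so $H_N(q^{-1},t^{-1},p)$ is a (nonzero scalar multiple of a) member of $\mathcal{M}^{\prime}(p)|_{\mathbf{C}\phi_N(p;x)|N\rangle}$, and the restrictions of the remaining $\mathcal{O}_p^{\prime}(\varepsilon_\lambda(q,p;z))$ commute with it by (1).

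The main obstacle is the technical verification that each $\mathcal{O}_p^{\prime}(\varepsilon_r(q,p;z))|_{\mathbf{C}\phi_N(p;x)|N\rangle}$ is genuinely an elliptic $q$-difference operator of order $r$. This requires iterating the argument of Theorem 3.1: the normal-ordered product $F(p;z_1)\cdots F(p;z_r)\phi_N(p;x)|N\rangle$ is a rational expression in theta functions, and one has to apply the elliptic partial-fraction expansion together with the theta/delta identity $\Theta_p(x)^{-1}+x^{-1}\Theta_p(x^{-1})^{-1}=(p;p)_\infty^{-3}\delta(x)$ repeatedly, taking constant terms in $z_1,\ldots,z_r$ successively, so that the delta functions collapse the integration to $z_k=tx_{i_k}$ and produce shifts $T_{q,x_{i_1}}\cdots T_{q,x_{i_r}}$ with explicit elliptic coefficients. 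The role of the zero mode inside $F(p;z)=\xi(p;z)-\widetilde{\xi}(p;z)t^{a_0}$ is essential here: exactly as in the proof of (3.6) and of Theorem 3.2, it kills the spurious tail terms left over from the partial fractions at each step, which is the only subtle bookkeeping in the argument.
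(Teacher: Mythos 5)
Your proposal follows the paper's own (largely implicit) route exactly: compatibility of $\mathcal{O}_p^{\prime}$ with $\ast^{\prime}$ proved as in Proposition 2.12, then Lemma 3.16 ($\ast^{\prime}=\ast$) together with the commutativity of $(\mathcal{A}(p),\ast)$ for part (1), and Theorem 3.2 applied to $\mathcal{O}_p^{\prime}(\varepsilon_{1}(q,p;z))=[F(p;z)]_{1}$ for part (2), with the order-$r$ elliptic $q$-difference property of $\mathcal{O}_p^{\prime}(\varepsilon_{r}(q,p;z))$ handled by iterating the Theorem 3.1 computation, which the paper likewise asserts rather than writes out. One typo-level slip: the needed identity is $g_{p}(z/w)=\omega_p^{\prime}(w,z)/\omega_p^{\prime}(z,w)$, the inverse of the ratio you wrote, but the displayed symmetry relation you then derive is the correct one and is in any case already recorded in Proposition 3.11(1).
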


Similar to the proposition 2.17, we can show that the commutative families $\mathcal{M}(p)$, $\mathcal{M}^{\prime}(p)$ commute with each other [15].

\begin{theorem}
For the commutative families $\mathcal{M}(p)$, $\mathcal{M}^{\prime}(p)$, we have the commutativity $[\mathcal{M}(p), \mathcal{M}^{\prime}(p)]=0$.
\end{theorem}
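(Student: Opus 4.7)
The plan is to mirror the strategy of Proposition 2.17, establishing commutativity of $\mathcal{M}(p)$ and $\mathcal{M}^{\prime}(p)$ in two stages, the engine of the argument being the commutator relation (3.26) of Proposition 3.11(2). Recall that (3.26) controls $[E(p;z),F(p;w)]$ by the delta $\delta(\gamma w/z)$ times a difference $\varphi^{+}(p;\gamma^{1/2}w) - \varphi^{+}(p;\gamma^{1/2}p^{-1}w)$ of normal-ordered operators.

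First I would establish the scalar commutativity $[[E(p;z)]_{1},[F(p;w)]_{1}]=0$ already noted after Proposition 3.11. Taking the constant term in $z$ of (3.26), the delta $\delta(\gamma w/z)=\sum_{n}(\gamma w/z)^{n}$ contributes only its $z^{0}$ coefficient (which is $1$), so the right-hand side collapses to a scalar multiple of $\varphi^{+}(p;\gamma^{1/2}w) - \varphi^{+}(p;\gamma^{1/2}p^{-1}w)$. Since $\varphi^{+}(p;u)=\bm{:}\eta(p;\gamma^{1/2}u)\xi(p;\gamma^{-1/2}u)\bm{:}$ is a normal-ordered exponential whose $u$-expansion has coefficient of $u^{-n}$ given by a pure mode operator independent of any overall rescaling of $u$, the constant-in-$w$ coefficients of $\varphi^{+}(p;\gamma^{1/2}w)$ and $\varphi^{+}(p;\gamma^{1/2}p^{-1}w)$ agree, so their difference vanishes. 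This is the elliptic counterpart of $[[\eta(z)]_{1},[\xi(w)]_{1}]=0$ used in the trigonometric case.

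Next, for $f\in\mathcal{A}_{m}(p)$ and $g\in\mathcal{A}_{n}(p)$, I would write the commutator $[\mathcal{O}_{p}(f),\mathcal{O}_{p}^{\prime}(g)]$ as a nested constant-term integral over $z_{1},\ldots,z_{m},w_{1},\ldots,w_{n}$ of the product of $f(z)$, $g(w)$, the coupling factors $\prod \omega_{p}(z_{i},z_{j})^{-1}$ and $\prod \omega_{p}^{\prime}(w_{k},w_{l})^{-1}$, and the commutator of $E(p;z_{1})\cdots E(p;z_{m})$ with $F(p;w_{1})\cdots F(p;w_{n})$. By repeated use of (3.26) to swap every $F(p;w_{k})$ past every $E(p;z_{i})$, this expands as a telescoping sum of contact contributions, each localized at $z_{i}=\gamma w_{k}$ by a delta factor and carrying a residual difference of $\varphi^{+}$'s together with surviving $E$'s and $F$'s. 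The symmetry of the $E$- and $F$-products in their own variables (Proposition 3.11(1)), combined with the compatibility of $\mathcal{O}_{p}$, $\mathcal{O}_{p}^{\prime}$ with the star product $\ast$ (Proposition 3.13 and its $F$-analog via Lemma 3.13-type arguments and $\ast^{\prime}=\ast$), allows one to reduce each contact term to the base case of Step 1: the residual $\varphi^{+}(p;\gamma^{1/2}w_{k})-\varphi^{+}(p;\gamma^{1/2}p^{-1}w_{k})$ has vanishing constant term in $w_{k}$.

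The main obstacle is the combinatorics of the iterated commutations. After moving all $F$'s past all $E$'s one obtains a sum indexed by partial matchings between the $z$- and $w$-variables, each summand containing residue evaluations of $f,g,\omega_{p},\omega_{p}^{\prime}$ at the contact loci together with a product of $\varphi^{+}$-differences and surviving normal-ordered factors. The delicate point is to show that the scaling invariance of the zero mode of $\varphi^{+}(p;\cdot)$ used in Step 1 propagates uniformly through these weighted sums, so that the cancellation survives multiplication by the elliptic symmetric data of $\mathcal{A}(p)$. In the trigonometric setting this bookkeeping is bypassed by invoking simultaneous diagonalization on Macdonald symmetric functions; in the elliptic case it must be carried out directly at the level of the elliptic Ding--Iohara algebra, which is where the technical weight of the proof resides.
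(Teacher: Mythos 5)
Your Step 1 is sound, and your expansion of $[\mathcal{O}_{p}(f),\mathcal{O}_{p}^{\prime}(g)]$ by moving each $F$ past each $E$ via (3.26) matches the paper's starting point (the Leibniz-type identity (3.32) in its Lemma 3.19). But the cancellation mechanism you propose for the contact terms does not work as stated. After the delta localizes $z_{i}=\gamma w_{k}$, the object whose constant term must vanish is not the bare difference $\varphi^{+}(p;\gamma^{1/2}w_{k})-\varphi^{+}(p;\gamma^{1/2}p^{-1}w_{k})$ but that difference multiplied by the functional prefactor $\Phi(w_{k})$ obtained by evaluating $f$, $g$, $\prod\omega_{p}^{-1}$, $\prod(\omega_{p}^{\prime})^{-1}$ at the contact locus. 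The scaling invariance of the zero mode of $\varphi^{+}$ kills only the bare difference: once multiplied by a nonconstant $\Phi$, the constant term $[\Phi(w)\varphi^{+}(p;\gamma^{1/2}w)]_{1}-[\Phi(w)\varphi^{+}(p;\gamma^{1/2}p^{-1}w)]_{1}$ is a pairing of \emph{all} Laurent modes of $\varphi^{+}$ against those of $\Phi$, weighted by powers of $p$, and it does not vanish for general $\Phi$. Your closing paragraph flags exactly this as ``the delicate point'' but supplies no mechanism; that is where the entire content of the proof lies, so the proposal has a genuine gap rather than a bookkeeping burden.

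The missing idea, which is the paper's Lemma 3.19, is $p$-periodicity of the prefactors. If $A(z_{1},\cdots,z_{r})$ and $B(w_{1},\cdots,w_{s})$ satisfy $T_{p,w_{k}}B=B$ and $T_{p,z_{i}}A=A$, then using $[h(w)]_{1}=[h(aw)]_{1}$ one may substitute $w_{k}\to pw_{k}$ in the second term, converting $\varphi^{+}(p;\gamma^{1/2}p^{-1}w_{k})$ into $\varphi^{+}(p;\gamma^{1/2}w_{k})$ while leaving the prefactor unchanged, whence the two terms cancel exactly. The theorem then reduces to the explicit check --- equations (3.37) and (3.38) --- that the combined functional parts $\varepsilon_{r}(q,p;z)\prod_{i<j}\omega_{p}(z_{i},z_{j})^{-1}$ and $\varepsilon_{s}(q,p;w)\prod_{i<j}\omega_{p}^{\prime}(w_{i},w_{j})^{-1}$ are $p$-periodic: the quasi-periodicity factors from $\Theta_{p}(px)=-x^{-1}\Theta_{p}(x)$ cancel in these specific theta-function ratios, though not in $\varepsilon_{r}$ or $\omega_{p}$ separately, and this is a computation your proposal never performs. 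Note also that your appeal to the compatibility of $\mathcal{O}_{p}$, $\mathcal{O}_{p}^{\prime}$ with $\ast$ is beside the point here: that gives commutativity \emph{within} each family, while the cross-family statement rests entirely on the periodicity argument applied to the generators $\varepsilon_{r}$, $\varepsilon_{s}$ (which suffices, since $\mathcal{O}_{p}(\varepsilon_{\lambda})$ factors as a product of the $\mathcal{O}_{p}(\varepsilon_{\lambda_{i}})$).
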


The theorem 3.18 is the elliptic analog of the proposition 2.17. But we can't prove the theorem 3.18 in the similar way of the proof of the proposition 2.17, because we don't have an elliptic analog of the Macdonald symmetric functions. Hence we will show the theorem 3.18 in a direct way. For the proof we prepare the following lemma.

\begin{lemma}
Assume that a $r$-variable function $A(x_{1},\cdots,x_{r})$ and a $s$-variable function $B(x_{1},\cdots,x_{s})$ have a period $p$, \textit{i.e.} $T_{p,x_{i}}A(x_{1},\cdots,x_{r})=A(x_{1},\cdots,x_{r}) \, (1\leq{i}\leq{r})$, $T_{p,x_{i}}B(x_{1},\cdots,x_{s})=B(x_{1},\cdots,x_{s}) \, (1\leq{i}\leq{s})$. Then we have 
\begin{align}
[[A(z_{1},\cdots,z_{r})E(p;z_{1})\cdots E(p;z_{r})]_{1},[B(w_{1},\cdots,w_{s})F(p;w_{1})\cdots F(p;w_{s})]_{1}]=0.
\end{align}
\end{lemma}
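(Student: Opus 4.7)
My plan is to reduce the left-hand side of (3.31) to a sum of elementary commutators $[E(p;z_{i}),F(p;w_{j})]$ via the Leibniz rule, and then to show that each summand vanishes after taking the constant term by combining proposition 3.14 (2) with the $p$-periodicity of $A$ and $B$.

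First I would observe that the operation $[\,\cdot\,]_{1}$ is $\mathbf{C}$-linear and commutes with taking commutators of operator-valued formal series in disjoint sets of variables, so the left-hand side of (3.31) equals
\[
\Bigl[A(z_{1},\ldots,z_{r})B(w_{1},\ldots,w_{s})\,\bigl[E(p;z_{1})\cdots E(p;z_{r}),\,F(p;w_{1})\cdots F(p;w_{s})\bigr]\Bigr]_{1},
\]
where $[\,\cdot\,]_{1}$ is now taken jointly in all variables $z_{1},\ldots,z_{r},w_{1},\ldots,w_{s}$. Applying the Leibniz rule twice expresses the inner commutator as
\[
\sum_{i=1}^{r}\sum_{j=1}^{s}E(p;z_{1})\cdots E(p;z_{i-1})F(p;w_{1})\cdots F(p;w_{j-1})\,[E(p;z_{i}),F(p;w_{j})]\,F(p;w_{j+1})\cdots F(p;w_{s})E(p;z_{i+1})\cdots E(p;z_{r}),
\]
so it suffices to prove that each $(i,j)$-summand contributes zero under $[\,\cdot\,]_{1}$.

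For fixed $(i,j)$, proposition 3.14 (2) yields
\[
[E(p;z_{i}),F(p;w_{j})] = K\,\delta(\gamma w_{j}/z_{i})\,\bigl\{\varphi^{+}(p;\gamma^{1/2}w_{j}) - \varphi^{+}(p;\gamma^{1/2}p^{-1}w_{j})\bigr\},
\]
where $K=\Theta_{p}(q)\Theta_{p}(t^{-1})/\{(p;p)_{\infty}^{3}\Theta_{p}(qt^{-1})\}$ is independent of the spectral variables. The delta function eliminates the $z_{i}$-integration, pinning $z_{i}=\gamma w_{j}$; note that the surviving operator factors $E(p;z_{k})$ for $k\neq i$ and $F(p;w_{l})$ for $l\neq j$ are independent of both $z_{i}$ and $w_{j}$, and that the $z_{i}$-dependence of $A$ is the only other place $z_{i}$ appears. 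The $(i,j)$-summand therefore splits into a difference of two constant-term expressions that are identical except for the argument of the $\varphi^{+}$ factor.

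The crucial step, and the only one that invokes the periodicity hypothesis, is to cancel these two expressions against one another by the multiplicative substitution $w_{j}\mapsto p\,w_{j}$ applied to the second of them. Because $[f(w)]_{1}=[f(pw)]_{1}$ for any formal Laurent series $f$, this leaves the constant term in $w_{j}$ intact; under the substitution, $\varphi^{+}(p;\gamma^{1/2}p^{-1}w_{j})$ becomes $\varphi^{+}(p;\gamma^{1/2}w_{j})$, matching the first term. The only remaining $w_{j}$-dependent factors are $A(z_{1},\ldots,\gamma w_{j},\ldots,z_{r})$ (after the delta substitution $z_{i}=\gamma w_{j}$) and $B(w_{1},\ldots,w_{j},\ldots,w_{s})$, which become $A(z_{1},\ldots,p\gamma w_{j},\ldots,z_{r})$ and $B(w_{1},\ldots,pw_{j},\ldots,w_{s})$; both are unchanged by $p$-periodicity of $A$ and $B$ in the relevant arguments. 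Hence the second expression coincides with the first, the two cancel, each $(i,j)$-summand contributes zero, and (3.31) follows. The main technical point is simply this bookkeeping of $w_{j}$-dependence in the post-delta expression, together with the invariance of the constant term under multiplicative rescaling; no tool beyond proposition 3.14 (2) and the periodicity hypothesis is required.
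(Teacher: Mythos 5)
Your proof is correct and takes essentially the same route as the paper: expand the commutator termwise via the Leibniz rule, invoke the relation $[E(p;z),F(p;w)]=c(q,t,p)\,\delta(\gamma w/z)\{\varphi^{+}(p;\gamma^{1/2}w)-\varphi^{+}(p;\gamma^{1/2}p^{-1}w)\}$ (which is proposition 3.11 (2), equation (3.26), not proposition 3.14 (2)) to pin $z_{i}=\gamma w_{j}$, and cancel the two $\varphi^{+}$ contributions by the rescaling $w_{j}\mapsto pw_{j}$ using $[f(w)]_{1}=[f(pw)]_{1}$ together with the $p$-periodicity of $A$ and $B$. Apart from that citation slip, the argument matches the paper's proof of lemma 3.19 step for step.
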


\begin{proof}[\textit{Proof}] 
First we have
\begin{align}
[A_{1}\cdots A_{r},B_{1}\cdots B_{s}]=\sum_{i=1}^{r}\sum_{j=1}^{s}A_{1}\cdots A_{i-1}B_{1}\cdots B_{j-1}[A_{i},B_{j}]B_{j+1}\cdots B_{s}A_{i+1}\cdots A_{r}.
\end{align}
Set $c(q,t,p):=\Theta_{p}(q)\Theta_{p}(t^{-1})/(p;p)_{\infty}^{3}\Theta_{p}(qt^{-1})$ and we denote the $p$-difference of $f(z)$ by $\Delta_{p}f(z):=f(pz)-f(z)$. By the equation (3.32) and (3.26) in the proposition 3.11, we have the following.
\begin{align}
&\quad [A(z_{1},\cdots,z_{r})E(p;z_{1})\cdots E(p;z_{r}),B(w_{1},\cdots,w_{s})F(p;w_{1})\cdots F(p;w_{s})] \notag\\
&=\sum_{i=1}^{r}\sum_{j=1}^{s}A(z_{1},\cdots,z_{r})B(w_{1},\cdots,w_{s})E(p;z_{1})\cdots E(p;z_{i-1})F(p;w_{1})\cdots F(p;w_{j-1}) \notag\\
&\hskip 2cm \times [E(p;z_{i}),F(p;w_{j})]F(p;w_{j+1})\cdots F(p;w_{s})E(p;z_{i+1})\cdots E(p;z_{r}) \notag\\
&=c(q,t,p)\sum_{i=1}^{r}\sum_{j=1}^{s}E(p;z_{1})\cdots E(p;z_{i-1})F(p;w_{1})\cdots F(p;w_{j-1}) \notag\\
&\quad\times A(z_{1},\cdots,\overbrace{\gamma w_{j}}^{\text{$i$-th}},\cdots,z_{r})B(w_{1},\cdots,\overbrace{w_{j}}^{\text{$j$-th}},\cdots,w_{s})\delta\Big(\gamma\frac{w_{j}}{z_{i}}\Big)\Delta_{p}\varphi^{+}(p;\gamma^{1/2}p^{-1}w_{j}) \notag\\
&\quad\times F(p;w_{j+1})\cdots F(p;w_{s})E(p;z_{i+1})\cdots E(p;z_{r}).
\end{align}
By picking up the constant term of $z_{i}$, $w_{j}$ dependence part of (3.33), we have
\begin{align}
&\quad \bigg[A(z_{1},\cdots,\overbrace{\gamma w_{j}}^{\text{$i$-th}},\cdots,z_{r})B(w_{1},\cdots,\overbrace{w_{j}}^{\text{$j$-th}},\cdots,w_{s})\delta\Big(\gamma\frac{w_{j}}{z_{i}}\Big)\Delta_{p}\varphi^{+}(p;\gamma^{1/2}p^{-1}w_{j})\bigg]_{z_{i},w_{j},1} \notag\\
&=\bigg[A(z_{1},\cdots,\overbrace{\gamma w_{j}}^{\text{$i$-th}},\cdots,z_{r})B(w_{1},\cdots,\overbrace{w_{j}}^{\text{$j$-th}},\cdots,w_{s})\Delta_{p}\varphi^{+}(p;\gamma^{1/2}p^{-1}w_{j})\bigg]_{w_{j},1} \notag\\
&=\bigg[A(z_{1},\cdots,\overbrace{\gamma w_{j}}^{\text{$i$-th}},\cdots,z_{r})B(w_{1},\cdots,\overbrace{w_{j}}^{\text{$j$-th}},\cdots,w_{s})\varphi^{+}(p;\gamma^{1/2}w_{j})\bigg]_{w_{j},1} \notag\\
&\hskip 1cm -\bigg[A(z_{1},\cdots,\overbrace{\gamma w_{j}}^{\text{$i$-th}},\cdots,z_{r})B(w_{1},\cdots,\overbrace{w_{j}}^{\text{$j$-th}},\cdots,w_{s})\varphi^{+}(p;\gamma^{1/2}p^{-1}w_{j})\bigg]_{w_{j},1}. 
\end{align}
We recall $[f(z)]_{1}=[f(az)]_{1}$ $(a\in\mathbf{C})$ and the functions $A(z_{1},\cdots,z_{r})$ and $B(w_{1},\cdots,w_{s})$ have a period $p$. Hence we have
\begin{align*}
\bigg[A(z_{1},\cdots,\overbrace{\gamma w_{j}}^{\text{$i$-th}},\cdots,z_{r})B(w_{1},\cdots,\overbrace{w_{j}}^{\text{$j$-th}},\cdots,w_{s})\delta\Big(\gamma\frac{w_{j}}{z_{i}}\Big)\Delta_{p}\varphi^{+}(p;\gamma^{1/2}p^{-1}w_{j})\bigg]_{z_{i},w_{j},1}=0.
\end{align*}
The equation holds for any $i,j$, hence we have the lemma 3.19. \quad $\Box$
\end{proof}

\begin{proof}[\textit{Proof of the theorem 3.18.}]
For the proof what we have to show is 
\begin{align*}
[\mathcal{O}_{p}(\varepsilon_{r}(q,p;z)),\mathcal{O}^{\prime}_{p}(\varepsilon_{s}(q,p;w))]=0 \quad (r,s\in\mathbf{Z}_{>0}).
\end{align*}
By the definition of $\mathcal{O}_{p}$, $\mathcal{O}^{\prime}_{p}$, operators $\mathcal{O}_{p}(\varepsilon_{r}(q,p;z))$, $\mathcal{O}^{\prime}_{p}(\varepsilon_{s}(q,p;w))$ are the constant terms of the following operators.
\begin{align}
&\varepsilon_{r}(q,p;z)\prod_{1\leq{i<j}\leq{r}}\omega_{p}(z_{i},z_{j})^{-1}E(p;z_{1})\cdots E(p;z_{r}), \\
&\varepsilon_{s}(q,p;w)\prod_{1\leq{i<j}\leq{s}}\omega^{\prime}_{p}(w_{i},w_{j})^{-1}F(p;w_{1})\cdots F(p;w_{s}).
\end{align}
Then their functional parts take the following forms.
\begin{align}
&\quad\text{(Functional part of (3.35))} \notag\\
&=\varepsilon_{r}(q,p;z)\prod_{1\leq{i<j}\leq{r}}\omega_{p}(z_{i},z_{j})^{-1}
=\prod_{1\leq{i<j}\leq{r}}\frac{\Theta_{p}(z_{i}/z_{j})\Theta_{p}(q^{-1}z_{i}/z_{j})}{\Theta_{p}(t^{-1}z_{i}/z_{j})\Theta_{p}(q^{-1}tz_{i}/z_{j})}, \\
&\quad\text{(Functional part of (3.36))} \notag\\
&=\varepsilon_{s}(q,p;w)\prod_{1\leq{i<j}\leq{s}}\omega^{\prime}_{p}(w_{i},w_{j})^{-1}
=\prod_{1\leq{i<j}\leq{s}}\frac{\Theta_{p}(w_{i}/w_{j})\Theta_{p}(qw_{i}/w_{j})}{\Theta_{p}(tw_{i}/w_{j})\Theta_{p}(qt^{-1}w_{i}/w_{j})}.
\end{align}
We can check (3.37), (3.38) have a period $p$. By the lemma 3.19, we have the theorem 3.18. \quad $\Box$
\end{proof}

By the theorem 3.18, commutative families $\mathcal{M}(p)|_{\mathbf{C}\phi_{N}(p;x)|N \rangle}$, $\mathcal{M}^{\prime}(p)|_{\mathbf{C}\phi_{N}(p;x)|N \rangle}$ also commute with each other : $[\mathcal{M}(p)|_{\mathbf{C}\phi_{N}(p;x)|N \rangle}, \mathcal{M}^{\prime}(p)|_{\mathbf{C}\phi_{N}(p;x)|N \rangle}]=0$.

\medskip
\textbf{Acknowledgement.}
The author would like to thank Koji Hasegawa and Gen Kuroki for helpful discussions and comments.

\end{document}